\theoremstyle{plain} 
\newtheorem{theorem}[thm]{Theorem}
\newtheorem{corollary}[thm]{Corollary}
\newtheorem{proposition}[thm]{Proposition}
\theoremstyle{definition} 
\newtheorem{definition}[thm]{Definition}
\newtheorem{remark}[thm]{Remark}
\newtheorem{example}[thm]{Example}
\def\ifenv#1{
	\def\@tempa{#1}%
	\ifx\@tempa\@currenvir
	   \expandafter\@firstoftwo
	 \else
	   \expandafter\@secondoftwo
	\fi
}
\newcommand{\thlabel}[1]{%
	\label{#1}
	\begingroup
	\ifenv{theorem}{
		\def\@currentlabel{Theorem \ref{#1}}
		\label{#1-th}
	}
	{\ifenv{unprovedtheorem}{
		\def\@currentlabel{Theorem \ref{#1}}
		\label{#1-th}
	}
 	{\ifenv{proposition}{
		\def\@currentlabel{Proposition \ref{#1}}
		\label{#1-th}
	}
	{\ifenv{unprovedproposition}{
		\def\@currentlabel{Proposition \ref{#1}}
		\label{#1-th}
	}
	{\ifenv{lemma}{
		\def\@currentlabel{Lemma \ref{#1}}
		\label{#1-th}
	}
	{\ifenv{corollary}{
		\def\@currentlabel{Corollary \ref{#1}}
		\label{#1-th}
	}
	{\ifenv{remark}{
		\def\@currentlabel{Remark \ref{#1}}
		\label{#1-th}
	}
	{\ifenv{example}{
		\def\@currentlabel{Example \ref{#1}}
		\label{#1-th}
	}
	{\ifenv{definition}{
		\def\@currentlabel{Definition \ref{#1}}
		\label{#1-th}
	}
	{\ifenv{thmC}{
		\def\@currentlabel{Theorem \ref{#1}}
		\label{#1-th}
	}
	{\GenericWarning{No Environment Found}}}}}}}}}}}
	\endgroup
}
\newcommand{\thref}[1]{\ref{#1-th}}
\newcommand{\ovln}[1]{\overline{#1}}
\newcommand{\angbr}[2]{\langle #1,#2 \rangle}
\newcommand{\freccia}[3]{\xymatrix@1{#2 \colon #1  \ar[r] &  #3}}
\newcommand{\arrow}[3]{#2 \colon #1  \to #3}
\newcommand{\arrowup}[3]{\xymatrix@1{ #1  \ar[r]^{#2} &  #3}}
\newcommand{\pbmorph}[2]{#1^{\ast}#2}
\newcommand{\twomorphism}[6]{\xymatrix{   
#1^{\op} \ar[rrd]^#2_{}="a" \ar[dd]_{#3^{\op}}\\
&& \pos\\
#5^{\op}  \ar[rru]_#6^{}="b"
\ar_{#4}  "a";"b"}}
\newcommand{\quadratocomm}[8]{ \xymatrix@+1pc{  
#1 \ar[r]^{#5} \ar[d]_{#6} & #2 \ar[d]^{#7} \\
#3 \ar[r]_{#8} & #4 
}}
\newcommand{\pullbackcorner}[1][ul]{\save*!/#1+1.2pc/#1:(1,-1)@^{|-}\restore}
\def\pr{\pi}
\def\id{\operatorname{ id}}
\def\op{\operatorname{ op}}
\def\dom{\operatorname{ dom}}
\def\mC{\mathcal{C}}
\def\mD{\mathcal{D}}
\def\pos{\mathsf{Pos}}
\newcommand{\pca}[1]{\mathbb{#1}}
\newcommand{\subpca}[1]{\mathbb{#1}'}
\def\set{\mathsf{Set}}
\newcommand{\doctrine}[2]{\xymatrix@1{#2 \colon #1^{\op}  \ar[r] & \pos }}
\newcommand{\dial}[1]{\mathfrak{Dial}(#1)}
\newcommand{\dialfull}[1]{\mathfrak{Dial}_{\mathsf{f}}(#1)}
\newcommand{\compex}[1]{{#1}^{\exists}}
\newcommand{\compun}[1]{{#1}^{\forall}}
\newcommand{\compexfull}[1]{{#1}^{\exists_{\mathsf{f}}}}
\newcommand{\compunfull}[1]{{#1}^{\forall_{\mathsf{f}}}}
\newcommand{\leexcomp}{\le_{\exists}}
\newcommand{\lefullunicomp}{\le_{\forall_{\mathsf{f}}}}
\newcommand{\lefullexcomp}{\le_{\exists_{\mathsf{f}}}}
\newcommand{\function}[2]{\colon #1 \to #2}
\newcommand{\pfunction}[2]{:\subseteq #1 \to #2}
\newcommand{\pmfunction}[2]{:\subseteq #1 \rightrightarrows #2}
\newcommand{\ran}{\operatorname{ran}}
\newcommand{\powerset}{\raisebox{0.6mm}{\Large\ensuremath{\wp}}}
\newcommand{\st}{:}
\newcommand{\pcadoctrine}{\pca{A}^{(-)}}
\newcommand{\Mpcadoctrine}{\mathfrak{T}}
\newcommand{\Mpcaorder}{\le_{\mathsf{T}}}
\newcommand{\realizabilityOrder}{\le}
\newcommand{\MedvedevOrder}{\le_\mathrm{M}}
\newcommand{\MedvedevDoctrine}{\mathfrak{M}}
\newcommand{\medvedevreducible}{\le_\mathrm{M}}
\newcommand{\Mucpcadoctrine}{\pca{A}_w}
\newcommand{\Mucpcaorder}{\le_{\mathsf{dw}}}
\newcommand{\MuchnikOrder}{\le_w}
\newcommand{\MuchnikDoctrine}{\mathfrak{M}_w}
\newcommand{\WeiElementary}{\mathfrak{eW}}
\newcommand{\WeiOrderElementary}{\le_\mathsf{dW}}
\newcommand{\WeiDoctrine}{\mathfrak{W}}
\newcommand{\WeiDoctrineOrder}{\le_\mathsf{W}}
\newcommand{\barAsm}[1]{|#1|}
\newcommand{\relAsm}[1]{\Vdash_{#1}}
\newcommand{\strWeiElementary}{\mathfrak{sW}}
\newcommand{\newstrWeiDoctrine}{\mathfrak{SW}}
\newcommand{\strWeiOrder}{\le_\mathsf{dsW}}
\newcommand{\newstrWeiOrder}{\le_\mathsf{sW}}
\newcommand{\rweireducibile}{\le_{\mathrm{rW}}}
\newcommand{\rweiequiv}{\equiv_{\mathrm{rW}}}
\newcommand{\RWeiElementary}{\mathfrak{erW}}
\newcommand{\RWeiOrderElementary}{\le_\mathsf{drW}}
\newcommand{\RWeiDoctrine}{\mathfrak{rW}}
\newcommand{\RWeiDoctrineOrder}{\le_\mathsf{rW}}
\newcommand{\extweireducible}{\le_{\mathsf{extW}}}
\newcommand{\extstrongweireducible}{\le_{\mathsf{extsW}}}
\newcommand{\EWeiElementary}{\mathfrak{etW}}
\newcommand{\EWeiOrderElementary}{\le_\mathsf{dextW}}
\newcommand{\EWeiDoctrine}{\mathfrak{tW}}
\newcommand{\EWeiDoctrineOrder}{\le_\mathsf{extW}}
\newcommand{\Assemblies}{\mathsf{Asm}}
\newcommand{\Modest}{\mathsf{Mod}}
\newcommand{\weireducible}{\le_{\mathrm{W}}}
\newcommand{\strongweireducible}{\le_{\mathrm{sW}}}
\newcommand{\str}[1]{( #1 )}
\newcommand{\pairing}[1]{\langle #1 \rangle}
\newcommand{\coding}[1]{\langle #1 \rangle}
\newcommand{\concat}{\smash{\raisebox{.9ex}{\ensuremath\smallfrown} }}
\newcommand{\repmap}[1]{\delta_{#1}}
\newcommand{\Baire}{\mathbb{N}^\mathbb{N}}
\newcommand{\manlio}[1]{\todo[color=cyan, inline]{Manlio: #1}}
\newcommand{\subtraction}{\backslash}
\newcommand{\PcaCat}{\mathsf{Mod}_{\mathsf{p}}}
\newcommand{\extPcaCat}{\mathsf{extAsm}}
\newcommand{\partAsm}{\mathsf{ParAsm}}
\newcommand{\Hom}{\mathsf{Hom}}
\newcommand{\fprAsm}{\mathsf{fst}}
\newcommand{\sprAsm}{\mathsf{snd}}
\newcommand{\pair}{\mathsf{pair}}
\newcommand{\name}{\Vdash}
\newcommand{\D}{\mathfrak{D}}
\newcommand{\support}[1]{\| #1\|}
\begin{document}

\title[Categorifying Computable Reducibilities]{Categorifying Computable Reducibilities}
\author[D.~Trotta]{Davide Trotta\lmcsorcid{https://orcid.org/0000-0003-4509-594X}}[a]	
\address{University of Padova, Italy}	
\email{trottadavide92@gmail.com}  

\author[M.~Valenti]{Manlio Valenti\lmcsorcid{0000-0003-0351-3058}}[b]	
\address{University of Wisconsin - Madison, USA}
\curraddr{Swansea University, UK}
\email{manliovalenti@gmail.com}
\thanks{
Davide Trotta's research has been partially supported by the Italian MIUR  
project PRIN 2017FTXR7S \emph{IT-MATTERS (Methods and Tools for Trustworthy Smart Systems)}. Manlio Valenti's research was partially supported by the Italian PRIN 2017 Grant \emph{Mathematical Logic: models, sets, computability}. 
Valeria de Paiva and Davide Trotta are grateful to the Hausdorff Research Institute for Mathematics in Bonn, Germany, for hosting us as part of the trimester ``Prospects of Formal Mathematics,'' funded by the Deutsche Forschungsgemeinschaft (DFG, German Research Foundation) under Germany's Excellence Strategy – EXC-2047/1 – 390685813.
The authors would like to thank Jonas Frey, Takayuki Kihara, and Arno Pauly for useful conversations on the topics of the paper. They also thank the anonymous referees for their careful reading of the paper and many valuable suggestions.}
\author[V.~de Paiva]{Valeria de Paiva\lmcsorcid{https://orcid.org/0000-0002-1078-6970}}[c]
\address{Topos Institute, California, USA}
\email{valeria@topos.institute}

\begin{abstract}
This paper presents categorical formulations of Turing, Medvedev, Muchnik, and Weihrauch reducibilities in Computability Theory, utilizing Lawvere doctrines. While the first notions lend themselves to a smooth categorical presentation, essentially dualizing the traditional idea of realizability doctrines, Weihrauch reducibility and its extensions to represented and multi-represented spaces require a separate investigation.

Our abstract analysis of these concepts highlights a shared characteristic among all these reducibilities. Specifically, we demonstrate that all these doctrines stemming from computability concepts can be proven to be instances of completions of quantifiers for doctrines, analogous to what occurs for doctrines for realizability.
As a corollary of these results, we will be able to formally compare Weihrauch reducibility with the dialectica doctrine constructed from a doctrine representing Turing degrees.
 
\end{abstract}

\maketitle

\section{Introduction}
Categorical methods and language have been employed in many areas of Mathematics. In Mathematical Logic, they are widely used in Model Theory and Proof Theory. In Recursion or Computability Theory,  there is a long tradition of categorical methods in Realizability studies, expounded in van Oosten's book  \cite{van_Oosten_realizability}.
One of the key tools employed in such a setting is the notion of \emph{tripos}, introduced by Hyland, Johnstone, and Pitts in \cite{hyland89}, which is a specific instance of the notion of \emph{hyperdoctrine} introduced by Lawvere ~\cite{lawvere1969,lawvere1969b,lawvere1970} to synthesize the structural properties of logical systems.

The use of categorical methods in Realizability includes, for example, Hofstra's work  \cite{Hofstra2006}, where he proved that most well-known realizability-like triposes, (e.g. the ``effective" tripos~\cite{hyland89}, the ``modified realizability" tripos~\cite{VANOOSTEN1997273} and the ``dialectica'' tripos~\cite{Biering_dialecticainterpretations}) are instances of a more general notion of tripos associated to a given ordered partial combinatory algebra (PCA) equipped with a filter. It is worth recalling that the notion of PCA presents a generalization of both Kleene's first and second models.
His point was that all these triposes can be presented as ``triposes for a given PCA", hence all these notions differ only in the choice of the associated PCA, and we could say that all realizability is relative to a choice of a PCA equipped with a filter. 
%

Over the years, several authors observed that realizability triposes are instances of a free construction adding ``generalized existential quantifiers'' to a given doctrine. For instance, we refer to Hofstra's work \cite{Hofstra2006}, Trotta and Maietti's work \cite{trottamaietti2020}, and Frey's works \cite{Frey2014AFS,Frey2020}. So every realizability tripos is obtained by freely adding left adjoints along the class of all morphisms of the base category. Hence, combining this result with the previous analysis by Hofstra, we have that realizability-like triposes are instances of the generalized existential completion. These results show the abstract, structural property that lies behind all the various forms of realizability.

Despite the long tradition of studying realizability and its variants from a categorical perspective, a systematic and in-depth analysis of computability-like reducibilities, such as Turing \cite{Odifreddi92, Soare1987}, Medvedev~\cite{Sorbi1996,Hinman2012}, Muchnik~\ \cite{Hinman2012}, and Weihrauch reducibilities~\cite{Brattka_2011}, from a categorical perspective is still lacking. 
A categorical presentation of these notions is useful to highlight some of their abstract structural properties. Moreover, this work provides a common language for the categorical logic community and the computability theorists. These are two fields that traditionally employ very different languages and notations, so a categorical understanding of reducibility in computability with categorical logic descriptions in terms of doctrines has the positive effect of fostering collaborations between the two communities.

A first approach to Medvedev and Muchnik reducibility via hyperdoctrines has been introduced in \cite{Kuyper2015}, while Weihrauch reducibility for assemblies (or multi-represented spaces) has been introduced only very recently in \cite{Kihara2022rethinking, SchroederCCA2022} through the notion of realizer-based Weihrauch reducibility. In \cite{Bauer2021}, Bauer introduced an abstract notion of reducibility between predicates, called instance reducibility, which commonly appears in reverse constructive mathematics. In a relative realizability topos, the instance degrees correspond to a generalization of (realizer-based) Weihrauch reducibility, called extended Weihrauch degrees, and the ``classical'' Weihrauch degrees correspond precisely to the $\lnot\lnot$-dense modest instance degrees in Kleene-Vesley realizability. Upon closer inspection, it is not hard to check that realizer-based Weihrauch reducibility is a particular case of Bauer's notion.

In this paper, we want to extend the use of the categorical tools of doctrines to computability, by presenting a specific doctrine for every previously mentioned notion of reducibility and studying their universal properties. In detail, we first introduce a doctrine for each of the notions of Turing, Medvedev, and Muchnik reducibility. After showing that these doctrines provide a proper categorification of these notions, we prove that both Medvedev and Muchnik reducibility are instances of the categorical construction called ``universal completion'' which freely adds ``generalized universal quantifiers'' to a given doctrine. This construction is a natural generalization of the ``pure universal completion'' used to characterize dialectica doctrines in \cite{trotta-lfcs2022,trotta23TCS}. As a corollary of these results, we show that the Medvedev doctrine is obtained as the full universal completion of the doctrine of Turing degrees.

We then focus on the notion of Weihrauch reducibility (and its strong version) from a categorical perspective. Following along the same lines, we start our analysis by defining a doctrine abstracting the ordinary notion of Weihrauch reducibility, and then we prove that this doctrine can be obtained as the pure existential completion of a more basic doctrine. The crucial difference between this setting and the previous one concerns the base category of the doctrine we consider. Indeed, while for Medvedev and Muchnik reducibility, the base category of the doctrines is the category $\set$ of sets and functions, for Weihrauch we will use the category $\PcaCat (\pca{A},\pca{A}')$ whose objects are subsets of the PCA $\pca{A}$, and whose morphisms are computable functions. This change of perspective is necessary to fully abstract the details of a Weihrauch reduction (in particular, it captures the role of the \emph{forward functional}). 

The last part of our work is devoted to studying Weihrauch reducibility in the more general context of partial multi-valued functions on represented spaces, a common setting for people working in computable analysis. However, the problem of generalizing our previous approach to this setting is non-trivial. 

In order to fully (and smoothly) abstract Weihrauch reducibility for represented spaces in the language of doctrines, we introduce a new base-category $\extPcaCat (\pca{A},\pca{A}')$ for our doctrines, that will turn out to be equivalent to the ordinary category of partition assemblies. The objects of $\extPcaCat (\pca{A},\pca{A}')$ are assemblies, but the morphisms are better suited to abstract the properties of the forward functional. Even if the category of partition assemblies is well-known and studied in the literature, using the category $\extPcaCat (\pca{A},\pca{A}')$ as base-category for doctrines abstracting Weihrauch reducibility has two main advantages: from a conceptual point of view, its objects and morphisms have an immediate and clear connection with the usual notions involved in the ordinary presentation of generalizations of Weihrauch reducibility to assemblies. Then, from a purely technical perspective, employing $\extPcaCat (\pca{A},\pca{A}')$ will make the proofs of our main results quite smooth.

Once the issue concerning the base category is resolved and the doctrines are appropriately defined, we finally demonstrate that all these doctrines are instances of pure existential completion, following the same lines adopted for the first Weihrauch doctrine we introduced.

Our abstract analysis systematizes all these variants of Weihrauch reducibility and shows a clear connection between them. In particular, we highlight how extended Weihrauch reducibility can be seen as the most general variant, and all the others can be obtained from it by considering suitable restrictions of this doctrine.

We conclude our work by highlighting an interesting connection between Medvedev and extended Weihrauch reducibility, and discussing how the notion of extended strong Weihrauch is related to G\"odel's Dialectica interpretation \cite{goedel1986}. In particular, our presentation allows us to provide a formal explanation of the resemblance between the structure of Dialectica categories and some known notions of computability (extended Weihrauch reducibility), which has been observed by several authors over the years.

The outline of this paper is as follows: in Section~\ref{sec:basic notions} and Section~\ref{sec:docs}, we recall the background notions of partial combinatory algebra and quantifier completions. In Section~\ref{sec:medvedev} we reformulate Medvedev and Muchnik reducibility in categorical terms. In Section~\ref{sec:weihrauch_doctrine} we categorify Weihrauch reducibility and, in Section~\ref{sec:gen_weihrauch}, we consider the just mentioned generalizations of Weihrauch reducibility. Finally, we draw some conclusions in Section~\ref{sec:conclusions}.

\section{Partial combinatory algebras
}\thlabel{sec:basic notions}

Realizability theory originated with Kleene’s interpretation of intuitionistic number theory~\cite{kleene1945} and has since developed into a large body of work in logic and theoretical computer science. We focus on two basic flavors of realizability, number realizability, and function realizability, which were both due to Kleene. 

In this section, we recall some standard notions within realizability and computability. We follow the approach suggested by van Oosten~\cite{van_Oosten_realizability}, as we want to fix a suitable notation for both category theorists and computability logicians. 

We describe \emph{partial combinatory algebras} and discuss some important examples.  For more details, we refer the reader to van Oosten's work on categorical realizability (see \cite{van_Oosten_realizability} and the references therein). 

We start by introducing the basic concept of a \emph{partial applicative structure} or PAS, due to Feferman, which may be viewed as a universe for computation. 

\begin{definition}[PAS]
    A  \textbf{partial applicative structure}, or PAS for short, is a set $\pca{A}$ equipped with a 
    \textbf{partial} binary operation 
$\cdot\pfunction{\pca{A}\times \pca{A}}{\pca{A}}$.
\end{definition}

Some conventions and terminology: given two elements $a,b$ in $\pca{A}$, we think of $a\cdot b$ (which we often abbreviate $ab$) as  ``$a$ applied to $b$". The partiality of the operation $\cdot$ means that this application need not always be defined. We write $f\pfunction{A}{B}$ to say that $f$ is a partial function with domain a subset of $A$, $\dom(f)\subseteq A$ and range $\ran(f)=B$.
If $(a,b)\in \dom(\cdot)$, that is, when the application is defined, then we write $a\cdot b \downarrow$ or $ab\downarrow$. 

We usually omit brackets, assuming associativity of application to the left. Thus $abc$ stands for $(ab)c$. Moreover, for two expressions $x$ and $y$ we write $x \simeq  y$ to indicate that $x$ is defined whenever $y$ is, in which case they are equal.

Even though these partial applicative structures do not possess many interesting properties (they have no axioms for application), they already highlight one of the key features of combinatorial structures, namely the fact that we have a domain of elements that can act both as functions and as arguments, just as in untyped $\lambda$-calculus. 
This behavior can be traced back to Von Neumann's idea that programs (functions, operations) live in the same realm and are represented in the same way as the data (arguments) that they act upon.
In particular, programs can act on other programs.

\begin{definition}[PCA]\thlabel{def: PCA}
    A \textbf{partial combinatory algebra} (PCA) is a PAS $\pca{A}$  for which there exist elements $k,s\in \pca{A}$ such that for all $a,b,c\in \pca{A}$ we have that
\[k a \downarrow \mbox{ and } kab\simeq a\] 
and
\[s a \downarrow \mbox{, } sab\downarrow \mbox{, and } sabc\simeq ac (bc)\] 
\end{definition}

The elements $k$ and $s$ are generalizations of the homonymous combinators in Combinatory Logic. Note that appropriate elements $k, s$ are not considered part of the structure of the partial combinatory algebra, so they need not be preserved under homomorphisms.

Every PCA $\pca{A}$ is combinatory complete in the sense of \cite{Feferman75,van_Oosten_realizability}, namely: for every term $t(x_1,\dots,x_{n+1})$ built from variables $x_1,\dots,x_{n+1}$, constants $\bar{c}$ for $c\in \pca{A}$, and application operator $\cdot$, there is an element $a\in \pca{A}$ such that for all elements $b_1,\dots,b_{n+1}\in \pca{A}$ we have that $ab_1\cdots b_n \downarrow$ and $ab_1\cdots b_{n+1}\simeq t(b_1,\dots,b_{n+1})$. 

In particular, we can use this result and the elements $k$ and $s$ to construct elements $\pair,  \fprAsm, \sprAsm $ of $\pca{A}$ so that $(a,b)\mapsto \pair \cdot a\cdot b$ is an injection of $\pca{A}\times \pca{A}$ to $\pca{A} $ with left inverse $a\mapsto (\fprAsm\cdot a,\sprAsm \cdot a)$. Hence, we can use $\pair ab$ as an element of $\pca{A}$ which codes the pair $(a, b)$. For this reason, the elements $\pair, \fprAsm ,\sprAsm $ are usually called \emph{pairing} and \emph{projection} operators. For the sake of readability, we write $\pairing{a,b}$ in place of $\pair ab$ (as it is more customary in computability theory).

Using $s$ and $k$, we can prove the analogues of the Universal Turing Machine (UTM) and the SMN theorems in computability in an arbitrary PCA. 
    A PCA $\pca{A}$ is called \textbf{extensional} if  for all $ x$ in $\pca{A}$,  $(ax\simeq bx)$ implies $a=b$. 

By definition, in every extensional PCA, if two elements represent the same partial function, then they must be equal.

Next we recall the notion of \emph{elementary sub-PCA} (see for example \cite[Sec.\ 2.6.9]{van_Oosten_realizability}). Many definitions in the computability context refer to a concept and a subset of the given concept, as one needs to pay attention to the computable functions (and elements) included in the original concept.
\begin{definition}[elementary sub-PCA] \thlabel{def:sub-pca}
Let $\pca{A}$ be a PCA. A subset $\subpca{A}\subseteq \pca{A}$ is called an \textbf{elementary sub-PCA} of $\pca{A}$ if $\subpca{A}$ is a PCA with the partial applicative structure induced by $\pca{A}$ (namely, the elements $k$ and $s$ as in \thref{def: PCA} can be found in $\pca{A}'$). In particular, it is closed under the application of $\pca{A}$, that means: if $a,b\in \subpca{A}$ and $ab\downarrow$ in $\pca{A}$ then $ab\in \subpca{A}$.
\end{definition}

In particular, elements of the sub-PCA will play the role of the computable functions.

\begin{example}[\emph{Kleene’s first model}]\label{ex: K_1}
Fix an effective enumeration $(\varphi_a)_{a\in\mathbb{N}}$ of the partial recursive functions $\mathbb{N}\to\mathbb{N}$ (i.e.\ a G\"odel numbering). The set $\pca{N}$ with partial recursive application $(a,b)\mapsto \varphi_a(b)$ is a PCA, and it is called \emph{Kleene’s first model} $\mathcal{K}_1$ (see e.g.\ \cite{Soare1987}).
\end{example}

\begin{example}
[\emph{Kleene’s second model}] The PCA $\mathcal{K}_2$ is often used for function realizability \cite[Sec.\ 1.4.3]{van_Oosten_realizability}. This PCA is given by the Baire space $\pca{N}^{\pca{N}}$, endowed with the product topology. 
The partial binary operation of application $\cdot\pfunction{\pca{N}^{\pca{N}}\times \pca{N}^{\pca{N}}}{\pca{N}^{\pca{N}}} $ corresponds to the one used in Type-$2$ Theory of Effectivity \cite{Weihrauch00}.
This can be described as follows. Let $\alpha[n]$ denote the string $\str{\alpha(0),\hdots, \alpha(n-1)}$. 
Every $\alpha\in \Baire$ induces a function $F_{\alpha}\pfunction{\Baire}{\mathbb{N}}$ defined as $F_{\alpha}(\beta)=k$ if there is $n\in\mathbb{N}$ such that $\alpha(\coding{\beta[n]})=k+1$ and $(\forall m<n)(\alpha(\coding{\beta[n]})=0)$, and undefined otherwise. The application $\alpha\cdot \beta$ can then be defined as the map $n\mapsto F_{\alpha}(\str{n}\concat \beta)$, where $\str{n}\concat \beta$ is the string $\sigma$ defined as $\sigma(0):=n$ and $\sigma(k+1):=\beta(k)$.

When working with Kleene's second model, we usually consider the elementary sub-PCA $\mathcal{K}_2^{rec}$ consisting of all the $\alpha\in\Baire$ such that $\beta\mapsto \alpha\cdot \beta$ is computable. For more details and examples, also of non-elementary sub-PCAs for Kleene's second model, we refer to \cite{Oosten2011PartialCA}.
\end{example}

\begin{remark}
Kleene's $\mathcal{K}_1$ and $\mathcal{K}_2$ presented above are not extensional, since there are many codes (programs) that compute the same function. In fact, every function has infinitely many representatives.
\end{remark}

\section{Categorical Doctrines}
\label{sec:docs}
We want to connect the notions of computability as described in the previous subsection to work on (logical) categorical doctrines in \cite{trotta-lfcs2022}. We recap only the essential definitions from the doctrines work in the text; further details and explanation can be found in \cite{trotta_et_al:LIPIcs.MFCS.2021.87, trotta-lfcs2022}.

Several generalizations of the notion of a (Lawvere) hyperdoctrine have been considered recently,  we refer, for example, to the works of Rosolini and Maietti \cite{maiettipasqualirosolini,maiettirosolini13a,maiettirosolini13b},  or to \cite{pitts02,hyland89} for higher-order versions. In this work, we consider a natural generalization of a hyperdoctrine, which we call simply a \emph{doctrine}.

\begin{definition}[doctrine]
A \textbf{doctrine} is a contravariant functor:
$$\doctrine{\mC}{P}$$ 
where the category $\mathcal{C}$ has finite products and $\pos$ is the category of (partially ordered sets or) posets.
\end{definition}
\begin{definition}[morphism of doctrines]
    A morphism of doctrines is a pair $\mathfrak{L}:=(F,\mathfrak{b})$
    $$\twomorphism{\mC}{P}{F}{\mathfrak{b}}{\mD}{R}$$
such that $\arrow{\mC}{F}{\mD}$ is a finite product preserving functor and: $$\arrow{P}{\mathfrak{b}}{R F^{\op}}$$ is a natural transformation.
\end{definition}

\begin{example}\thlabel{ex: doctrine PCA}
Let $\pca{A}$ be a PCA. 
We can define a functor $\doctrine{\set}{\pcadoctrine}$ assigning to a set $X$ the set  $\pca{A}^X$ of functions from $X$ to $\pca{A}$. 
By standard properties of PCAs, given two elements $\alpha,\beta\in \pca{A}^X$,  we have the following preorder: $\alpha\leq \beta$ if there exists an element $a\in \pca{A}$ such that for every $x\in X$ we have that $a\cdot \alpha(x)$ is defined and $a\cdot \alpha(x)=\beta(x)$.
The doctrine given by considering the poset reflection of such a preorder is the \emph{doctrine associated to the PCA $\pca{A}$}. Notice that this construction can be generalized in the context of relative realizability, see for example \cite[p.\  253]{Hofstra2006}. 
\end{example}
We recall the following example from \cite{pitts02,hyland89}.
\begin{example}\thlabel{ex: realizability tripos}
Given a PCA $\pca{A}$, we can consider the \textbf{realizability doctrine}\linebreak[4]$\doctrine{\set}{\mathcal{R}}$ 
over $\set$. For each set $X$, the partial order $(\mathcal{R}(X),\realizabilityOrder)$ is defined as the set of functions $\powerset(\pca{A})^X$ from $X$ to the powerset $\powerset(\pca{A})$ of $\pca{A}$. Given two elements $\alpha$ and $\beta$ of $\mathcal{R}(X)$, we say that $\alpha\realizabilityOrder \beta$ if there exists an element $\ovln{a}\in \pca{A}$ such that for all $x\in X$ and all $a\in \alpha (x)$, $\ovln{a}\cdot a$ is defined and it is an element of $\beta (x)$.  By standard properties of PCAs this relation is reflexive and transitive, i.e.\ it is a preorder.
Then $\mathcal{R} (X)$ is defined as the quotient of $\powerset(\pca{A})^X$ by the equivalence relation generated by the $\realizabilityOrder$.  The partial order on the equivalence classes $[\alpha]$ is the one induced by $\realizabilityOrder$. 
\end{example}

We also need to recall the general definitions of existential and universal doctrines.

\begin{definition}[$\mD$-existential/universal doctrines]\label{def full existential doctrine}
Let $\mC$ be a category with finite products, and let $\mD$ a class of morphisms of $\mC$ closed under composition, pullbacks, and identities. A  doctrine $\doctrine{\mC}{P}$ is $\mD$-\textbf{existential} (resp.\ $\mD$-\textbf{universal}) if, for every arrow $\arrow{X}{f}{A}$ of $\mD$, the functor
$$ \arrow{PA}{P_f}{PX}$$
has a left adjoint $\exists_{f}$ (resp.\ a right adjoint $\forall_{f}$), and these satisfy the Beck-Chevalley condition BC: 
for any pullback diagram
$$
\quadratocomm{X'}{A'}{X}{A}{f'}{h'}{h}{f}
$$
 and any $\beta$ in $P(X)$ the equality
$$\exists_{f'}P_{h'}\beta= P_h \exists_{f}\beta \,\,\,\, \textnormal{  (resp. }\forall_{f'}P_{h'}\beta= P_h \forall_{f}\beta\textnormal{  )}$$ holds. When $\mD$ is the class of all the morphisms of $\mC$ we say that the doctrine $P$ is \textbf{full existential} (resp.\ \textbf{full universal}), while when $\mD$ is the class of product projections, we will say that $P$ is \textbf{pure existential} (resp.\ \textbf{pure universal}).
\end{definition}

Next, we summarize the main properties of the generic full existential and universal completions in the following theorems and refer to \cite{trotta2020} for more details. 

\bigskip
\noindent
\textbf{Generalized existential completion}. Let $\doctrine{\mC}{P}$ be a doctrine and let $\mathcal{D}$ be a class of morphisms of $\mC$ closed under composition, pullbacks and containing identities.  For every object $A$ of $\mC$ consider the following preorder:
\begin{itemize}
\item \textbf{objects:} pairs $(\arrowup{B}{f\in \mathcal{D}}{A},\alpha)$, where $\arrow{B}{f}{A}$ is an arrow of $\mathcal{D}$ and $\alpha\in P(B)$.
\item \textbf{order:}  $(\arrowup{B}{f\in \mD}{A},\alpha)\leq (\arrowup{C}{g\in \mD}{A},\beta)$ if there exists an arrow $\freccia{B}{h}{C}$ of $\mC$ such that the diagram 
\[\xymatrix@+1pc{
& B\ar[d]^f\ar[ld]_h\\
C\ar[r]_{g} & A
}\]
commutes and
\[ \alpha\leq P_{h}(\beta).\]

\end{itemize}
It is easy to see that the previous construction gives a preorder. We denote by
$P^{\exists_{\mD}}(A)$ the partial order obtained by identifying two
objects when
$$(\arrowup{B}{h\in \mD}{A}, \alpha)\gtreqless (\arrowup{D}{f\in \mD}{A}, \gamma)$$
in the usual way. With a small abuse of notation, we identify an equivalence class with one of its representatives.

Given a morphism $\arrow{A}{f}{B}$ in $\mC$, let $P^{\exists_{\mD}}_f(\arrowup{C}{g}{B},\beta)$ be the object 
\[(\arrowup{D}{\pbmorph{f}{g}\in \mD}{A},\; P_{g^*f}(\beta) )\]
where $f^*g$ and $g^*f$ are defined by the pullback
\[\xymatrix@+1pc{
D\ar[d]_{f^*g} \pullbackcorner \ar[r]^{g^*f} &C\ar[d]^g\\
A \ar[r]_f & B.
}\]
The assignment $\doctrine{\mC}{P^{\exists_{\mD}}}$ is called the \textbf{generalized existential completion} of $P$. Following \cite{trotta2020,trottamaietti2020}, when $\mD$ is the class of all the morphisms of the base category, we will speak of \textbf{full existential completion}, and we will use the notation $\doctrine{\mC}{\compexfull{P}}$. Moreover, when $\mD$ is the class of all product projections, we will speak of \textbf{pure existential completion}, and we will use the notation $\doctrine{\mC}{\compex{P}}$.

\begin{thmC}[\cite{trotta2020}]
\label{theorem full existential comp}
The doctrine $P^{\exists_{\mD}}$ is $\mD$-existential. Moreover, for every  doctrine $P$ we have a canonical inclusion $\arrow{P}{\eta_P^{\exists_\mD}}{P^{\exists_{\mD}}}$ such that, for every morphism of  doctrines $\arrow{P}{\mathfrak{L}}{R}$, where $R$ is $\mD'$-existential and the functor between the bases sends arrows of $\mD$ into arrows of $\mD'$, there exists a unique (up to isomorphism) existential morphism doctrine (i.e.\ preserving existential quantifiers along $\mD$) such that the diagram
\[\xymatrix@+1pc{
& P^{\exists_{\mD}}\ar@{-->}[d]\\
P \ar[ru]^{\eta_P^{\exists_\mD}}\ar[r]_{\mathfrak{L}} & R
}\]
commutes.
\end{thmC}
\begin{example}
    Realizability doctrines are relevant examples of doctrines arising as full existential completions. The original observation of this result is due to Hofstra \cite{Hofstra2006}, while a more general analysis of doctrines arising as full existential completions can be found in \cite{trottamaietti2020,Frey2020}.
\end{example}
By dualizing the previous construction, we can define the $\mD$-\emph{universal completion} of a doctrine.

\bigskip
\noindent
\textbf{Generalized universal completion}. Let $\doctrine{\mC}{P}$ be a doctrine and let $\mathcal{D}$ be a class of morphisms of $\mC$ closed under composition, pullbacks and containing identities.  For every object $A$ of $\mC$ consider the following preorder:
\begin{itemize}
\item \textbf{objects:} pairs $(\arrowup{B}{f\in \mD}{A},\alpha)$, where $\arrow{B}{f}{A}$ is an arrow of $\mD$ and $\alpha\in P(B)$.
\item \textbf{order:}  $(\arrowup{B}{f\in \mD}{A},\alpha)\leq (\arrowup{C}{g\in \mD}{A},\beta)$ if there exists an arrow $\freccia{C}{h}{B}$ of $\mC$ such that the diagram 
\[\xymatrix@+1pc{
& B\ar[d]^f\\
C\ar[r]_{g}\ar[ru]^h & A
}\]
commutes and
\[ P_h(\alpha)\leq \beta .\]

\end{itemize}
Again, it is easy to see that the previous data gives us a preorder. We denote by
$P^{\forall_{\mD}}(A)$ the partial order obtained by identifying two
objects when
$$(\arrowup{B}{h\in \mD}{A}, \alpha)\gtreqless (\arrowup{D}{f\in \mD}{A}, \gamma)$$
in the usual way. As before, with a small abuse of notation, we identify an equivalence class with one of its representatives.

Given a morphism $\arrow{A}{f}{B}$ in $\mC$, let $P^{\forall_{\mD}}_f(\arrowup{C}{g\in \mD}{B},\beta)$ be the object 
\[(\arrowup{D}{\pbmorph{f}{g}\in \mD}{A},\; P_{g^*f}(\beta) )\]
where  $f^*g$ and $g^*f$ are defined by the pullback
\[\xymatrix@+1pc{
D\ar[d]_{f^*g} \pullbackcorner \ar[r]^{g^*f} &C\ar[d]^g\\
A \ar[r]_f & B.
}\]
The assignment $\doctrine{\mC}{\compunfull{P}}$ is called the $\mD$-\textbf{universal completion} of $P$. As before, when $\mD$ is the class of all the morphisms of the base category, we will speak of \textbf{full universal completion}, and we will use the notation $\doctrine{\mC}{\compunfull{P}}$. Moreover, when $\mD$ is the class of all product projections, we will speak of \textbf{pure universal completion}, and we will use the notation $\doctrine{\mC}{\compun{P}}$.

\begin{thmC}[\cite{trotta2020}]
\thlabel{theorem full universal comp}
The doctrine $P^{\forall_{\mD}}$ is $\mD$-universal. Moreover, for every doctrine $P$ we have a canonical inclusion $\arrow{P}{\eta_P^{\forall_\mD}}{P^{\forall_{\mD}}}$ such that, for every morphism of doctrines $\arrow{P}{\mathfrak{L}}{R}$, where $R$ is $\mD'$-universal and the functor between the bases sends arrows of $\mD$ into arrows of $\mD'$, there exists a unique (up to isomorphism) universal morphism doctrine (i.e.\ preserving universal quantifiers along $\mD$) such that the diagram
\[\xymatrix@+1pc{
& P^{\forall_{\mD}}\ar@{-->}[d]\\
P \ar[ru]^{\eta_P^{\forall_{\mD}}}\ar[r]_{\mathfrak{L}} & R
}\]
commutes.
\end{thmC}
\begin{remark}
    Notice that the previous free completions, in particular, the pure and the full existential completions, have been proved to be related to the so-called regular and exact completions of a category with finite limits \cite{SFEC,CARBONI199879}. We refer to \cite{maietti2023generalized} and \cite{maietti2021generalized} for a precise analysis and characterization of these results.  
\end{remark}

Now that we recalled both basic concepts of computability 
and the tools we need from Lawvere doctrines
we can start on the computability concepts we want to categorify here. First, we recall Medvedev reducibility and show it can be reformulated as a Medvedev doctrine.

\section{Medvedev doctrines}
\label{sec:medvedev}

The notion of Medvedev reducibility was introduced in the 50s, associated to a calculus of mathematical problems\footnote{Recently de Paiva and Da Silva showed that Kolmogorov problems can be seen as a variant of the Dialectica construction~\cite{depaiva2021Kolmogorov}.} in the style of Kolmogorov \cite{kolmogorov1932}, and now it is well established in the computability literature. We briefly introduce the main notions and definitions on the topic. For a more thorough presentation, the reader is referred to \cite{Sorbi1996,Hinman2012}.

A set $A\subseteq \Baire$ is sometimes called a \emph{mass problem}. The intuition is that a mass problem corresponds to the set of solutions for a specific computational problem. For example, the problem of deciding membership in a particular $P\subseteq \mathbb{N}$ corresponds to the mass problem $\{\chi_P \}$, where $\chi_P$ is the characteristic function of $P$. Similarly, the problem of enumerating $P$ corresponds to the family $\{ f\function{\mathbb{N}}{P} \st f \text{ is surjective}\}$. 

While Medvedev reducibility is usually defined in the context of Type-$2$ computability, we can give a slightly more general definition in the context of PCAs. 

\begin{definition}[Medvedev reducible set]
Let $\pca{A}$ be a PCA and let $\subpca{A}$ be an elementary sub-PCA of $\pca{A}$. If $A,B \subseteq \pca{A}$, we say that $A$ is \textbf{Medvedev reducible to} $B$, and write $A\medvedevreducible B$, if there is an effective functional $\Phi\in \subpca{A}$ such that $\Phi(B)\subseteq A$, i.e.\ $(\forall b\in B)(\Phi(b)\in A)$. 
\end{definition}
The notion of Medvedev reducibility induces a preorder on the powerset of $\pca{A}$, whose quotient is the \emph{Medvedev lattice}.
In the following, whenever there is no ambiguity, we identify a degree with any of its representatives.

\medskip

We will now introduce a \emph{Medvedev doctrine} that generalizes the notion of Medvedev reducibility. Intuitively, the Medvedev doctrine maps every singleton $X$ to an isomorphic copy of the Medvedev lattice. However, if $X$ is not a singleton, we obtain a somewhat different structure, corresponding to having several Medvedev reductions all witnessed by the same map. 

To define the Medvedev doctrine we first introduce the \emph{Turing doctrine}.
\begin{definition}
\thlabel{def:Medvedev doctrine of singletons}
Let $\pca{A}$ be a PCA and $\subpca{A}$ be an elementary sub-PCA. We can define a functor $\doctrine{\set}{\Mpcadoctrine}$ mapping a set $X$ to the set $\pca{A}^X$ of functions from $X$ to $\pca{A}$. Given two elements $\alpha,\beta\in \Mpcadoctrine(X)$, we define $\alpha\Mpcaorder \beta$ if there exists an element $\ovln{a}\in \subpca{A}$ such that for every $x\in X$ we have that $\ovln{a}\cdot \beta(x)$ is defined and $\ovln{a}\cdot \beta(x)=\alpha(x)$. The functor $\doctrine{\set}{\Mpcadoctrine}$ is called \textbf{Turing doctrine}.
\end{definition}

\begin{remark}
    \thlabel{remark:turing_doctrine}
    The name ``Turing doctrine'' is motivated by the fact that, when working with Kleene's second model, then $\Mpcadoctrine(1)$ can be identified with $\Baire$, and the reduction $\alpha \Mpcaorder \beta$ holds whenever there is a computable functional $\Phi$ such that $\Phi(\beta) = \alpha$, i.e.\ $(\forall n)(\Phi(\beta)(n)= \alpha(n))$. This corresponds precisely to the notion of Turing reducibility between functions $\mathbb{N}\to \mathbb{N}$.     
\end{remark}

Notice also that, in the previous example, it is important to consider Kleene's second model. Indeed, if we instead work with Kleene's first model, $\Mpcadoctrine(1)$ would be trivial (all numbers are computable) and $\Mpcadoctrine(\mathbb{N})$ would give rise to a stronger notion of reducibility than Turing reducibility: given $f,g\function{\mathbb{N}}{\mathbb{N}}$, $f\Mpcaorder g$ iff there is a computable functional $\Phi$ such that $(\forall n)(\Phi(g(n))=f(n))$, hence only the value of $g(n)$ is needed to compute $f(n)$. 

\begin{definition}[Medvedev doctrine] \thlabel{def:full medvedev doctrine}
    Given a PCA $\pca{A}$ with elementary sub-PCA $\subpca{A}$, we define the \textbf{Medvedev doctrine} $\doctrine{\set}{\MedvedevDoctrine}$ over $\set$ as follows: for every set $X$ and every pair of functions $\varphi,\psi$ in $\powerset(\pca{A})^X$, we define 
    \begin{align*}
        \varphi \MedvedevOrder \psi :\iff & (\exists \ovln{a}\in \subpca{A})(\forall x \in X)(\forall b\in \psi(x))(\exists a \in \varphi(x))(\ovln{a}\cdot b = a)\\
            \iff & (\exists \ovln{a}\in \subpca{A})(\forall x \in X)(\ovln{a}\cdot\psi(x) \subseteq \varphi(x)).
    \end{align*}
 This preorder induces an equivalence relation on functions in $\powerset(\pca{A})^X$. 
    The poset $\MedvedevDoctrine(X)$ is defined as the quotient of $\powerset(\pca{A})^X$ by the equivalence relation generated by $\MedvedevOrder$. The partial order on the equivalence classes $[\varphi]$ is the one induced by the Medvedev order $\MedvedevOrder$. Moreover, given a function $f\function{X}{Y}$, the functor $\MedvedevDoctrine_f\function{\MedvedevDoctrine(Y)}{\MedvedevDoctrine (X)}$ is defined as $\MedvedevDoctrine_f(\psi):=\psi \circ f$.
\end{definition}

Observe that, when working with Kleene's second PCA, if $X$ is a singleton then $\MedvedevDoctrine(X)$ corresponds exactly to the Medvedev degrees. For an arbitrary set $X$, the reduction $\varphi,\psi\in \MedvedevDoctrine(X)$ corresponds to a uniform Medvedev reducibility between $\varphi(x)$ and $\psi(x)$, where $x\in X$. In particular, if $X\subseteq \Baire$, we can think of $\varphi$ and $\psi$ as computational problems on the Baire space. In this case, the reduction $\varphi\MedvedevOrder \psi$ corresponds to a strong Weihrauch reduction where the forward functional is simply the identity function. We will discuss strong Weihrauch reducibility and its connections with the Medvedev doctrine in more detail in Sections~\ref{sec:strong_wei_doctrine}, \ref{sec:gen_weihrauch}.

We also show that, for every $X$, $\MedvedevDoctrine(X)$ is a distributive lattice with $\bot=\{\pca{A}\}$ and $\top=\emptyset$, where the join and the meet of the lattice are induced respectively by the following operation on mass problems: 
\begin{itemize}
    \item $A \lor B:= \{\pairing{a,b} \st a\in A \text{ and } b\in B\}$,
    \item $A \land B:= A \sqcup B = \{ \str{0}\concat a \st a \in A \} \cup \{\str{1}\concat b \st b\in B \}$, where $\str{n}\concat f(0):=n$ and $\str{n}\concat f(i+1):= f(i)$.
\end{itemize}

Next we want to show that, for every $X$, $\MedvedevDoctrine(X)$ is a co-Heyting algebra (i.e.\ a Brouwer algebra, see \cite[Thm.\ 9.1]{Sorbi1996}), where the subtraction operation is defined as $A\subtraction B := \min\{ C \st B \le A \lor B\}$. In other words, this subtraction is an `implication' with respect to the join of the lattice. The fact that, in general, $\MedvedevDoctrine(X)$ is not a Heyting algebra follows from the fact that $\MedvedevDoctrine(1)$ is not (see \cite[Thm.\ 9.2]{Sorbi1996}).

\begin{proposition}[Medvedev co-Heyting algebra]
\thlabel{thm:medvedev_algebra}
For every set $X$, $\MedvedevDoctrine(X)$ is a co-Heyting algebra, where:
\begin{enumerate}
    \item $\bot:=x\mapsto \pca{A}$
    \item $\top:=x\mapsto \emptyset$; 
    \item $(\varphi \wedge \psi)(x):=\{ \pairing{p_1, a} \st a \in \varphi(x)\}\cup \{ \pairing{p_2,b} \st b\in \psi(x)\}$, where $p_1, p_2$ are two fixed (different) elements in $\subpca{A}$. 
    \item $(\varphi \vee \psi)(x):=\{ \pairing{a,b}\st a\in \varphi (x) \text{ and } b\in \psi(x)\}$;
    \item $(\varphi\subtraction \psi )(x):=\{c\in \pca{A} \st (\forall b\in \psi(x))(c\cdot b\in \varphi(x))\}$.
\end{enumerate}
\end{proposition}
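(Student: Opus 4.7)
The plan is to verify each of the five items, establishing that $\MedvedevDoctrine(X)$ has the claimed lattice structure and then checking the adjunction $\varphi \subtraction \psi \MedvedevOrder \chi \iff \varphi \MedvedevOrder \psi \vee \chi$ characterizing subtraction in a co-Heyting algebra. All arguments ultimately reduce to producing a realizer in $\subpca{A}$ witnessing a given Medvedev reduction, and combinatory completeness of $\subpca{A}$ will furnish these realizers uniformly in $x$.

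First, for the bounds, note that $\bot \MedvedevOrder \varphi$ holds because any $\mathsf{i}\in\subpca{A}$ behaving as the identity satisfies $\mathsf{i}\cdot \varphi(x) \subseteq \pca{A} = \bot(x)$, and $\varphi \MedvedevOrder \top$ holds vacuously since $\top(x)=\emptyset$. For the meet, the reductions $\varphi \wedge \psi \MedvedevOrder \varphi$ and $\varphi \wedge \psi \MedvedevOrder \psi$ are witnessed by the two codes for the functions $a\mapsto \pairing{p_1,a}$ and $b\mapsto \pairing{p_2,b}$, respectively. For the universal property, given $\ovln{a}_1, \ovln{a}_2 \in \subpca{A}$ witnessing $\chi \MedvedevOrder \varphi$ and $\chi \MedvedevOrder \psi$, combinatory completeness produces $\ovln{a}\in\subpca{A}$ that, on input $c$, inspects $\fprAsm\cdot c$: if it equals $p_1$ it returns $\ovln{a}_1\cdot (\sprAsm\cdot c)$, else it returns $\ovln{a}_2\cdot (\sprAsm\cdot c)$; this yields $\ovln{a}\cdot(\varphi\wedge\psi)(x)\subseteq\chi(x)$.

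For the join, the reductions $\varphi \MedvedevOrder \varphi \vee \psi$ and $\psi \MedvedevOrder \varphi \vee \psi$ are witnessed respectively by $\fprAsm$ and $\sprAsm$, while the universal property follows by observing that if $\ovln{a}_1\cdot\chi(x)\subseteq \varphi(x)$ and $\ovln{a}_2\cdot\chi(x)\subseteq\psi(x)$, then the element $\ovln{a}\in\subpca{A}$ (again obtained by combinatory completeness) with $\ovln{a}\cdot c = \pairing{\ovln{a}_1\cdot c,\ovln{a}_2\cdot c}$ witnesses $\varphi\vee\psi \MedvedevOrder \chi$. A routine verification also shows that the resulting lattice is distributive, with $\vee$ distributing over $\wedge$ by pairing the two components appropriately.

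The main step is the subtraction, which requires showing $\varphi \subtraction \psi \MedvedevOrder \chi$ if and only if $\varphi \MedvedevOrder \psi \vee \chi$. Unfolding the definitions, the left side asserts the existence of $\ovln{a}\in\subpca{A}$ with $(\ovln{a}\cdot d)\cdot b \in \varphi(x)$ for every $d \in \chi(x)$ and $b\in\psi(x)$, whereas the right side asserts the existence of $\ovln{a}'\in\subpca{A}$ with $\ovln{a}'\cdot \pairing{b,d}\in\varphi(x)$ for the same $b,d$. Given a witness for one side, combinatory completeness produces a witness for the other via the identities $\ovln{a}'\cdot \pairing{b,d} = (\ovln{a}\cdot d)\cdot b$ and $(\ovln{a}\cdot d)\cdot b = \ovln{a}'\cdot\pairing{b,d}$. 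The mild subtlety here is checking that everything is well-defined independently of the choice of representatives of the equivalence classes, which follows at once because the preorder is monotone under composition with realizers; combined with the lattice calculations above, this yields the adjunction and hence the co-Heyting structure.
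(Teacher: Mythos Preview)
Your proof is correct and follows essentially the same approach as the paper's own argument: both verify the bounds, the meet and join with the same realizers (tagging/pairing for the introduction rules, case analysis and projections for the universal properties), and both establish the subtraction adjunction $\varphi\subtraction\psi \MedvedevOrder \chi \iff \varphi \MedvedevOrder \psi\vee\chi$ by unwinding each side to the condition $(\ovln{a}\cdot d)\cdot b\in\varphi(x)$ versus $\ovln{a}'\cdot\pairing{b,d}\in\varphi(x)$ and passing between them via combinatory completeness. Your explicit remarks on distributivity and on well-definedness modulo the equivalence relation are slight elaborations not spelled out in the paper, but they do not change the route taken.
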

\begin{proof}
    This proposition can be proved essentially the same way one proves that the Medvedev degrees form a co-Heyting algebra (see \cite[Thm.\ 1.3]{Sorbi1996}). Let $\varphi,\psi \in \MedvedevDoctrine(X)$.
    \begin{enumerate}
        \item The reduction $\bot \MedvedevOrder \varphi$ is witnessed by the identity functional.
        \item The reduction $\varphi\MedvedevOrder \top$ is trivially witnessed by any $\ovln{a}\in\subpca{A}$, as the quantification on $b\in \top(x)$ is vacuously true.
        \item The reductions $\varphi \land \psi \MedvedevOrder \varphi$ and $\varphi \land \psi \MedvedevOrder \psi$ are witnessed respectively by the maps $a\mapsto \pairing{p_1, a}$ and $b\mapsto \pairing{p_2,b}$. Moreover, if $\rho\MedvedevOrder \varphi$ via $a_\varphi$ and $\rho\MedvedevOrder \psi$ via $a_\psi$ then the reduction $\rho \MedvedevOrder \varphi \land \psi$ is witnessed by the map that, upon input $\pairing{p,c}$, if $p=p_1$ returns $a_\varphi\cdot c$, otherwise returns $a_\psi \cdot c$.
        \item The reductions $\varphi \MedvedevOrder \varphi \lor \psi$ and $\psi \MedvedevOrder \varphi \lor \psi$ are witnessed by the projections. Moreover, if $\varphi \MedvedevOrder \rho$ via $a_\varphi$ and $\psi \MedvedevOrder \rho$ via $a_\psi$ then $\varphi \lor \psi \MedvedevOrder \rho$ is witnessed by the map $x\mapsto \pairing{a_\varphi\cdot x, a_\psi\cdot x}$.
        \item We need to show that, for every $\rho$, $\varphi\subtraction \psi \MedvedevOrder \rho \iff \varphi \MedvedevOrder \psi \lor \rho$. To prove the left-to-right implication, observe that if $\ovln{a}\in \subpca{A}$ witnesses the reduction $\varphi\subtraction \psi \MedvedevOrder \rho$, then, for every $\pairing{b,d}\in (\psi \lor \rho)(x)$, $\ovln{a}\cdot d\in(\varphi\subtraction \psi)(x)$, and therefore $(\ovln{a}\cdot d)\cdot b\in \varphi(x)$. To prove the right-to-left implication, notice that if $\ovln{b}$ witnesses $\varphi \MedvedevOrder \psi \lor \rho$, then, by definition, for every $\pairing{b,c}\in (\psi \lor \rho)(x)$, $\ovln{b}\cdot \pairing{b,c}\in \varphi(x)$. This implies that the map $b \mapsto \ovln{b}\cdot \pairing{b,c}\in (\varphi\subtraction \psi)(x)$, therefore concluding the proof. \qedhere
    \end{enumerate}
\end{proof}

Now we want to show some structural properties of Medvedev's doctrines.
Using the two theorems we recalled from previous work, we can show:

\begin{proposition}\thlabel{prop:structures of medvedev doctrine}
The Medvedev doctrine $\doctrine{\set}{\MedvedevDoctrine}$  is a full universal and pure existential doctrine. In particular, for every function $f\function{X}{Y}$, the morphism $\forall_f\function{\MedvedevDoctrine(X)}{\MedvedevDoctrine(Y)}$ sending an element $\varphi\in \MedvedevDoctrine(X)$ to the element $\forall_f (\varphi)\in \MedvedevDoctrine(Y)$ defined as
\[\forall_f (\varphi)(y):= \bigcup_{x\in f^{-1}(y)}\varphi(x)\]
is right adjoint to $\MedvedevDoctrine_f$, i.e.\ $\psi \MedvedevOrder \forall_f (\varphi)\iff \MedvedevDoctrine_f(\psi)\MedvedevOrder \varphi$ for any $\psi \in \MedvedevDoctrine(Y)$ and $\varphi\in \MedvedevDoctrine(X)$. 

Similarly, if $f$ is surjective then the assignment 
\[\exists_f (\varphi)(y):= \bigcap_{x\in f^{-1}(y)}\varphi(x)\]
determines a left adjoint to $\MedvedevDoctrine_f$, i.e.\ $\exists_f (\varphi)\MedvedevOrder \psi \iff \varphi \MedvedevOrder\MedvedevDoctrine_f(\psi)$ for any $\psi \in \MedvedevDoctrine(Y)$ and $\varphi\in \MedvedevDoctrine(X)$.
\end{proposition}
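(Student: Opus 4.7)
The plan is to verify each adjunction by pure manipulation of quantifiers, observing that the very same realizer $\ovln{a}\in\subpca{A}$ witnesses both sides of each equivalence. No transformation of realizers is needed, so the proof reduces to bookkeeping about the order of quantifiers and the set-theoretic descriptions of fibres.

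For the right adjoint, by unfolding the definition of $\MedvedevOrder$, an element $\ovln{a}\in\subpca{A}$ witnesses $\MedvedevDoctrine_f(\psi)\MedvedevOrder\varphi$ if and only if for every $x\in X$ and every $b\in\varphi(x)$ one has $\ovln{a}\cdot b\in\psi(f(x))$. Partitioning the quantification over $x$ according to $y := f(x)$, this is equivalent to the statement that for every $y\in Y$ and every $b\in\bigcup_{x\in f^{-1}(y)}\varphi(x)$ one has $\ovln{a}\cdot b\in\psi(y)$, which is precisely the condition for $\ovln{a}$ to witness $\psi\MedvedevOrder\forall_f(\varphi)$. Hence $\MedvedevDoctrine_f\dashv\forall_f$ for arbitrary $f$. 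For the Beck-Chevalley condition, one computes the pullback of $f$ along $h\function{A'}{A}$ in $\set$ as $X'=\{(x,a')\st f(x)=h(a')\}$ with $h'(x,a')=x$ and $f'(x,a')=a'$; both $\forall_{f'}(\MedvedevDoctrine_{h'}\varphi)(a')$ and $\MedvedevDoctrine_h(\forall_f\varphi)(a')$ then unfold to $\bigcup_{x\in f^{-1}(h(a'))}\varphi(x)$, giving the desired equality.

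For the left adjoint along a surjective $f$, the argument is symmetric: $\ovln{a}$ witnesses $\exists_f(\varphi)\MedvedevOrder\psi$ iff for every $y\in Y$, every $b\in\psi(y)$ and every $x\in f^{-1}(y)$ one has $\ovln{a}\cdot b\in\varphi(x)$. By surjectivity the joint quantification over $y$ and $x\in f^{-1}(y)$ is exactly a quantification over $x\in X$ (with $y=f(x)$), which rewrites the condition as $\ovln{a}$ witnessing $\varphi\MedvedevOrder\MedvedevDoctrine_f(\psi)$. For the Beck-Chevalley condition along a product projection $\pi\function{A\times X}{A}$, the pullback along any $h\function{A'}{A}$ is the projection $\pi'\function{A'\times X}{A'}$ together with $h\times\id_X$, and both $\exists_{\pi'}(\MedvedevDoctrine_{h\times\id_X}\varphi)(a')$ and $\MedvedevDoctrine_h(\exists_\pi\varphi)(a')$ evaluate to $\bigcap_{x\in X}\varphi(h(a'),x)$, an intersection over a fibre that is the same on both sides.

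There is no real obstacle: the whole argument is essentially a swap of nested quantifiers. The only points to keep in mind are the direction of the Medvedev order (the smaller element is the easier mass problem, and realizers transform solutions of the larger element into solutions of the smaller one), and the role of surjectivity in the existential case, which guarantees that every fibre $f^{-1}(y)$ is non-empty so that the intersection $\bigcap_{x\in f^{-1}(y)}\varphi(x)$ faithfully expresses the left adjoint without resorting to any convention about the empty intersection.
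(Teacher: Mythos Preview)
Your proof is correct and follows essentially the same definition-chasing approach as the paper: both arguments observe that the very same realizer $\ovln{a}\in\subpca{A}$ witnesses each side of the adjunction biconditional, with no transformation of realizers required. You go slightly further than the paper by explicitly verifying the Beck--Chevalley conditions (which the paper's proof omits), but the core reasoning is identical.
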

\begin{proof}
    This is essentially a definition-chasing exercise. Let us first show that, for every $\psi\in \MedvedevDoctrine(Y)$ and $\varphi\in\MedvedevDoctrine(X)$, $\psi \MedvedevOrder \forall_f (\varphi)\iff \MedvedevDoctrine_f(\psi)\MedvedevOrder \varphi$. Assume first that the reduction $\psi \MedvedevOrder \forall_f (\varphi)$ is witnessed by $\ovln{a}\in\subpca{A}$. The same $\ovln{a}$ witnesses $\MedvedevDoctrine_f(\psi)\MedvedevOrder \varphi$: indeed, for every $x\in X$ and every $a \in \varphi(x)$, we have $a \in \forall_f(\varphi)(f(x))$, and therefore $\ovln{a}\cdot a \in \psi(f(x)) = \MedvedevDoctrine_f(\psi)(x)$. On the other hand, assume $\ovln{b}$ witnesses $\MedvedevDoctrine_f(\psi)\MedvedevOrder \varphi$. For every $y\in Y$, if $b\in \forall_f(\varphi)(y)$ then $b\in \varphi(x)$ for some $x\in f^{-1}(y)$. In particular, $\ovln{b}\cdot b \in \MedvedevDoctrine_f(\psi)(x)= \psi(y)$, i.e.\ $\ovln{b}$ witnesses $\psi \MedvedevOrder \forall_f(\varphi)$.

    The second part of the statement is proved analogously. Assume first that $\exists_f (\varphi)\MedvedevOrder \psi$ is witnessed by $\ovln{c}$. Fix $x\in X$ and $a \in \MedvedevDoctrine_f(\psi)(x)=\psi(f(x))$. In particular, $\ovln{c}\cdot a \in \exists_f(\varphi)(f(x))$, and hence $\ovln{c}\cdot a \in \varphi(x)$. Finally, assume $\ovln{d}$ witnesses $\varphi \MedvedevOrder\MedvedevDoctrine_f(\psi)$, and let $y\in Y$ and $b\in \psi(y)$. Since $f$ is surjective, there is $x\in X$ s.t.\ $f(x)=y$. Moreover, for every $x\in f^{-1}(y)$ we have $b\in \psi(f(x))$ and, hence, $\ovln{d}\cdot b \in \varphi(x)$. This implies that  $\ovln{d}\cdot b \in \bigcap_{x\in f^{-1}(y)} \varphi(x) = \exists_f (\varphi)(y)$.
\end{proof}

The rest of this section is devoted to studying abstract \textbf{universal} properties of Medvedev doctrines.

\begin{theorem}[Medvedev isomorphism]
\thlabel{thm:medvedev_iso}
Let $\doctrine{\set}{\MedvedevDoctrine}$ be the Medvedev doctrine for a given PCA $\pca{A}$. Then we have the isomorphism of full universal doctrines
\[\MedvedevDoctrine\equiv \compunfull{\Mpcadoctrine}.\]
In particular, for every set $X$, the map 
\[(f,\alpha)\mapsto \alpha\circ f^{-1} \]
is a surjective (pre)order homomorphism between $(\compunfull{\Mpcadoctrine}(X),\lefullunicomp)$ and $(\MedvedevDoctrine(X),\MedvedevOrder)$.
\end{theorem}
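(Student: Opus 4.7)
My plan is to invoke the universal property of the full universal completion (\thref{theorem full universal comp}) with source $\Mpcadoctrine$ and target $\MedvedevDoctrine$. First I would define a morphism of doctrines $\mathfrak{L}\colon\Mpcadoctrine\to\MedvedevDoctrine$ over $\id_\set$ by sending, at each $X$, a function $\alpha\colon X\to\pca{A}$ to the ``singleton'' map $x\mapsto\{\alpha(x)\}$. Monotonicity is immediate: any realizer witnessing $\alpha\Mpcaorder\beta$ also witnesses $(x\mapsto\{\alpha(x)\})\MedvedevOrder(x\mapsto\{\beta(x)\})$. Since $\MedvedevDoctrine$ is full universal by \thref{prop:structures of medvedev doctrine}, \thref{theorem full universal comp} yields a unique full-universal doctrine morphism $\wht{\mathfrak{L}}\colon\compunfull{\Mpcadoctrine}\to\MedvedevDoctrine$ extending $\mathfrak{L}$.

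Next I would compute this morphism explicitly on a representative $(f\colon B\to X,\alpha)$ of $\compunfull{\Mpcadoctrine}(X)$. By construction of the completion such a representative equals $\forall_f$ applied to the image of $\alpha$ under the canonical inclusion $\Mpcadoctrine\hookrightarrow\compunfull{\Mpcadoctrine}$. Since $\wht{\mathfrak{L}}$ preserves full universal quantifiers and extends $\mathfrak{L}$, we obtain
\[
\wht{\mathfrak{L}}_X(f,\alpha) \;=\; \forall_f\bigl(x\mapsto\{\alpha(x)\}\bigr),
\]
and the explicit formula for $\forall_f$ in $\MedvedevDoctrine$ from \thref{prop:structures of medvedev doctrine} gives $y\mapsto\bigcup_{x\in f^{-1}(y)}\{\alpha(x)\} = (\alpha\circ f^{-1})(y)$, matching the stated formula.

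It then remains to show that the assignment $(f,\alpha)\mapsto\alpha\circ f^{-1}$ is surjective and order-reflecting on preorder representatives. For surjectivity, given $\varphi\in\MedvedevDoctrine(X)$, take $B:=\{(x,a)\mid x\in X,\,a\in\varphi(x)\}$ with $f$ the first projection and $\alpha$ the second; then $\alpha\circ f^{-1} = \varphi$. For order-reflection, suppose $(f\colon B\to X,\alpha)$ and $(g\colon C\to X,\beta)$ satisfy $\alpha\circ f^{-1}\MedvedevOrder\beta\circ g^{-1}$ via some $\ovln{a}\in\subpca{A}$. For each $c\in C$, since $\beta(c)\in(\beta\circ g^{-1})(g(c))$, there exists $b\in f^{-1}(g(c))$ with $\alpha(b) = \ovln{a}\cdot\beta(c)$. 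Using the axiom of choice, pick one such $b$ for every $c$ and call the resulting function $h\colon C\to B$. Then $f\circ h = g$ and $\ovln{a}$ witnesses $\alpha\circ h\Mpcaorder\beta$, so $(f,\alpha)\lefullunicomp(g,\beta)$.

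The delicate step is this last one: one must produce a single reindexing map $h$ that simultaneously commutes with the maps to $X$ and preserves the Medvedev realizer, which is where the axiom of choice is genuinely used. Everything else --- naturality, preservation of $\forall_f$, and the upgrade from a fiberwise order isomorphism to an isomorphism of full universal doctrines --- is automatic from the uniqueness clause of \thref{theorem full universal comp}.
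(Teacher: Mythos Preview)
Your proof is correct and reaches the same conclusion, but it is organized differently from the paper's argument. The paper proceeds purely by hand: it writes down the map $(f,\alpha)\mapsto\alpha\circ f^{-1}$, checks \emph{both} directions of the order correspondence directly (the forward direction by chasing the witnesses $h,\ovln a$ through the definitions, the backward direction by a choice-function construction of $h$), and finishes with the same surjectivity witness you use. It never invokes \thref{theorem full universal comp} or \thref{prop:structures of medvedev doctrine}.

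Your route is more conceptual for the forward direction: you obtain order-preservation and naturality for free from the universal property, and you identify the induced morphism with the stated formula by using the explicit description of $\forall_f$ in $\MedvedevDoctrine$ together with the (standard but unstated-in-the-paper) fact that in the full universal completion $\forall_f(\id_B,\alpha)=(f,\alpha)$. The order-reflection step and the surjectivity witness are then literally the same as in the paper. One small caveat: the uniqueness clause of \thref{theorem full universal comp} does not by itself upgrade $\wht{\mathfrak{L}}$ to an isomorphism; that still relies on the fibrewise surjectivity plus order-reflection you establish, so your final sentence slightly overstates what uniqueness buys you. Otherwise the argument is sound, and the trade-off is the expected one: your version is cleaner conceptually but depends on a couple of facts about the completion that the paper's bare-hands verification avoids.
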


\begin{proof}
    Observe that the full universal completion $\compunfull{\Mpcadoctrine}$ of $\Mpcadoctrine$ can be described as follows:
\begin{itemize}
    \item $\compunfull{\Mpcadoctrine}(X) = \{ (f,\alpha) \st f\function{Y}{X} \text{ and } \alpha \in \Mpcadoctrine(Y) \},$
    \item if $f\function{Y}{X}$, $\alpha\in\Mpcadoctrine(Y)$, $g\function{Z}{X}$, and $\beta\in \Mpcadoctrine(Z)$, then 
    \[ (f,\alpha)\lefullunicomp (g,\beta) \iff (\exists h\function{Z}{Y})((\forall z \in Z)(g(z) = f\circ h(z)) \text{ and } (\Mpcadoctrine)_h(\alpha) \Mpcaorder \beta).\] 
\end{itemize}
In other words, $(f,\alpha)\lefullunicomp (g,\beta)$ if and only if there are $h\function{Z}{Y}$ and $\ovln{a}\in\pca{A}$ such that the following diagram commutes 
\begin{center}
\begin{tikzcd}
Y \arrow[rd, "f"] \arrow[dd, "\alpha"'] &   & Z \arrow[ld, "g"'] \arrow[ll, "h"'] \arrow[dd, "\beta"] \\
                                                       & X &                                                                    \\
    \pca{A}                             &   & \pca{A}  \arrow[ll, "\ovln{a}"]          
    \end{tikzcd}    
\end{center}

If we define $\varphi:= \alpha\circ f^{-1}$ and $\psi:=\beta\circ g^{-1}$. we obtain the following diagram:
\begin{center}
    \begin{tikzcd}
    Y \arrow[rd, "f"] \arrow[dd, "\alpha"'] &                                                                    & Z \arrow[ld, "g"'] \arrow[ll, "h"'] \arrow[dd, "\beta"] \\
                                            & X \arrow[ld, "\varphi", Rightarrow] \arrow[rd, "\psi"', Rightarrow] &                                                         \\
    \pca{A}                  &                                                                    & \pca{A}\arrow[ll, "\ovln{a}"]                                         
    \end{tikzcd}    
\end{center}
where $\varphi$ and $\psi$ are represented as double arrows to stress the fact that they are maps into $\powerset(\pca{A})$.

Let us first show that $(f,\alpha)\lefullunicomp (g,\beta)$ implies $\varphi \MedvedevOrder \psi$. Fix two witnesses $h\function{Z}{Y}$ and $\ovln{a}\in \subpca{A}$ for $(f,\alpha)\lefullunicomp (g,\beta)$ and fix $x\in X$. If $x\notin \ran(g)$ then $\psi(x)=\emptyset$, hence there is nothing to prove. Assume therefore that $x\in \ran(g)$. By definition, $\psi(x)=\beta(g^{-1}(x)) \neq\emptyset$. Fix $b\in \psi(x)$ and let $z\in g^{-1}(x)$ be s.t.\ $\beta(z)=b$. Since $(f,\alpha)\lefullunicomp (g,\beta)$, we can write 
\[  \ovln{a}\cdot b = \ovln{a}\cdot\beta(z)=\alpha\circ h(z) = \alpha(y), \]
for some $y\in f^{-1}(x)$. In particular, $\alpha(y)\in \varphi(x)$, and therefore $\ovln{a}$ witnesses $\varphi \MedvedevOrder \psi$.

Let us now prove the other direction. Let $\ovln{b}\in \subpca{A}$ be a witness for $\varphi \MedvedevOrder \psi$. Fix $z\in Z$. Clearly, letting $x_z:=g(z)$, $\psi(x_z)\neq \emptyset$. Moreover, $\psi(x_z)\neq \emptyset$ implies $\varphi(x_z)=\alpha(f^{-1}(x_z))\neq\emptyset$ (as $\ovln{b}\cdot \psi(x_z) \subseteq \varphi(x_z)$).  
We define $h$ as a choice function that maps $z$ to some $y\in f^{-1}(x_z)$ such that $\alpha(y)\in \ovln{b}\cdot \psi(x_z)$. Observe that $h$ is well-defined as if $a \in \ovln{b} \cdot \psi(x_z)$ then $a =\alpha(y)$ for some $y\in f^{-1}(x_z)$. This also shows that $h$ and $\ovln{b}$ witness $(f,\alpha)\lefullunicomp (g,\beta)$.

Finally, to show that the homomorphism is surjective it is enough to notice that every $\varphi\function{X}{\powerset(\pca{A})}$ is the image of the pair $(\pi_X,\pi_{\pca{A}})$, where $Y:= \{ (x,a) \in X \times \pca{A} \st a\in\varphi(x) \}$, and $\pi_X\function{Y}{X}$ and $\pi_{\pca{A}}\function{Y}{\pca{A}}$ are the two projections.
\end{proof}

In other words, when working with Kleene's second model, the Medvedev degrees are isomorphic to the full universal completion of the Turing doctrine. 

\begin{remark}
Notice that, when we consider a PCA $\pca{A}$ and the trivial elementary sub-PCA given by $\pca{A}$ itself, we have that the Medvedev doctrine $\doctrine{\set}{\MedvedevDoctrine}$ can be presented as the functor obtained by composing the realizability doctrine $\doctrine{\set}{\mathcal{R}}$  defined in \thref{ex: realizability tripos} with the op-functor $(-)^{\op}:\pos\to\pos$ inverting the order of posets.

Since it is known that the fibres of the realizability doctrine have the Heyting structure, this presentation provides, for example, an abstract explanation of the co-Heyting structure of the fibres of Medvedev doctrines presented in \thref{thm:medvedev_algebra}. 

Moreover, as it happens for the realizability triposes, notice that the Medvedev doctrine $\doctrine{\set}{\MedvedevDoctrine}$ can be equivalently presented in a second equivalent way via the well-known embedding (called \emph{constant objects functor}) $\nabla:\set \to \mathsf{RT}(\pca{A}',\pca{A})$ of the category of sets into a (relative) realizability topos (see e.g.\ \cite{van_Oosten_realizability}). We also refer to \cite{frey_streicher_2021} for a careful analysis and generalization of this approach to triposes. In particular, the doctrine $\MedvedevDoctrine$ happens to be equivalent to the functor mapping a given set $X$ into the poset given by $\mathsf{Sub}(\nabla(X))^{\op}$. 

In the specific case of the \emph{Kleene-Vesley topos} $\mathsf{RT}(\mathcal{K}_2^{rec},\mathcal{K}_2)$ (see for example \cite[p.\ 279]{van_Oosten_realizability} and the references therein) the fact that the poset given by $\mathsf{Sub}(\nabla(1))^{\op}$ provides exactly the Medvedev lattice has been already observed in \cite[p.\ 280]{van_Oosten_realizability}.
\end{remark}

\medskip

\subsection{Muchnik Doctrines}
Using a very similar strategy, we now show that the \textbf{Muchnik lattice} is isomorphic to the full universal completion of a doctrine. The notion of \emph{Muchnik reducibility} formally dates back to 1963 (but it was probably known earlier) and can be thought of as the non-uniform version of Medvedev reducibility. More precisely, given two mass problems $P,Q \subseteq \Baire$, we say that $P$ is Muchnik reducible to $Q$, and write $P\le_w Q$, if for every $q\in Q$ there is a functional $\Phi$ such that $\Phi(q) \in P$. In other words, $P\le_w Q$ if every element of $Q$ computes some element of $P$. Muchnik reducibility is sometimes called ``weak reducibility" (which motivates the choice of the symbol $\le_w$) to contrast it with Medvedev reducibility, sometimes called ``strong reducibility" \cite{Hinman2012}.

Similarly to the Medvedev degrees, the Muchnik degrees form a distributive lattice, where the join and the meet operations are induced by the same operations on subsets of $\Baire$ that induce the join and the meet in the Medvedev degrees. Unlike the Medvedev lattice, the Muchnik lattice is both a Heyting and a co-Heyting algebra \cite[Prop.\ 4.3 and 4.7]{Hinman2012}.

\begin{definition}[Muchnik doctrine of singletons]
\thlabel{def:Muchnik doctrine of singletons}
    Let $\pca{A}$ be a PCA and let $\subpca{A}$ be an elementary sub-PCA. We can define a doctrine $\doctrine{\set}{\Mucpcadoctrine}$ mapping a set $X$ to the set $\pca{A}^X$ of functions from $X$ to $\pca{A}$. Given two elements $\alpha,\beta\in \Mucpcadoctrine(X)$, we define $\alpha\Mucpcaorder \beta$ if 
    \[ (\forall x\in X)(\exists \ovln{a}\in \subpca{A})(\ovln{a}\cdot \beta(x) = \alpha(x)). \]
    The functor $\doctrine{\set}{\Mucpcadoctrine}$ is called \textbf{Muchnik doctrine of singletons}.   
\end{definition}

Observe that the order $\Mucpcaorder$ is just the non-uniform version of the order $\Mpcaorder$ induced by the Turing doctrine. In particular, if $X$ is a singleton then $\Mucpcadoctrine(X)= \Mpcadoctrine(X)$. This implies that Remark~\ref{remark:turing_doctrine} applies to the Muchnik doctrine of singletons as well. On the other hand, both $\Mucpcadoctrine(1)$ and $\Mucpcadoctrine(\mathbb{N})$ are trivial when working with Kleene's first model. 

\begin{definition}[Muchnik doctrine]
\thlabel{def:Muchnik doctrine}
      We define the \textbf{Muchnik doctrine} $\doctrine{\set}{\MuchnikDoctrine}$ over $\set$ as follows. For every set $X$ and every pair of functions $\varphi,\psi \in \powerset(\pca{A})^X$, we define 
    \[ \varphi \MuchnikOrder \psi :\iff (\forall x \in X)(\forall b\in \psi(x))(\exists \ovln{a}\in \subpca{A})(\ovln{a}\cdot b \in \varphi(x)).  \]
    This preorder induces an equivalence relation on functions in $\powerset(\pca{A})^X$. The doctrine $\MuchnikDoctrine(X)$ is defined as the quotient of $\powerset(\pca{A})^X$ by the equivalence relation generated by $\MuchnikOrder$. The partial order on the equivalence classes $[\varphi]$ is the one induced by $\MuchnikOrder$. 
\end{definition}

\begin{proposition}
For every set $X$, $\MuchnikDoctrine(X)$ is both a Heyting and a co-Heyting algebra, where:
\begin{enumerate}
    \item $\bot:=x\mapsto \pca{A}$ 
    \item $\top:=x\mapsto \emptyset$; 
    \item $(\varphi \wedge \psi)(x):=\{ \pairing{p_1, a} \st a \in \varphi(x)\}\cup \{ \pairing{p_2,b} \st b\in \psi(x)\}$, where $p_1, p_2$ are two fixed (different) elements in $\subpca{A}$. 
    \item $(\varphi \vee \psi)(x):=\{ \pairing{a,b}\st a\in \varphi (x) \text{ and } b\in \psi(x)\}$;
    \item $(\varphi\subtraction \psi )(x):=\{c\in \pca{A} \st (\forall b\in \psi(x))(c\cdot b\in \varphi(x))\}$.
    \item $(\varphi\rightarrow \psi )(x):=\{b\in \psi(x) \st (\forall \ovln{a} \in \subpca{A})(\ovln{a}\cdot b \notin \varphi(x))\}$.
\end{enumerate}
\end{proposition}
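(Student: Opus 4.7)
The plan is to verify each clause (1)--(6) in turn, following the structure of \thref{thm:medvedev_algebra}. The arguments for the bounds (1), (2), meet (3), and join (4) closely parallel the Medvedev case, the only difference being that in the Muchnik order the realizer $\ovln{a}\in\subpca{A}$ is existentially quantified \emph{inside} the universal quantifier on $(x,b)$. In practice, $\bot\MuchnikOrder\varphi$ is realized by the identity of $\pca{A}$, $\varphi\MuchnikOrder\top$ is vacuous, the reductions in (3) are realized by the pairings $a\mapsto\pairing{p_1,a}$ and $b\mapsto\pairing{p_2,b}$ (with the universal property obtained by case-splitting on $\fprAsm$ of the argument), and (4) is dual, using $\fprAsm$, $\sprAsm$ and the pairing combinator.

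Clause (5) gives the Brouwer (co-Heyting) structure, and the plan is to unfold both sides of the adjunction $\varphi\subtraction\psi\MuchnikOrder\rho\iff\varphi\MuchnikOrder\psi\vee\rho$. For the left-to-right direction, any realizer $\ovln{a}$ of the left-hand side (for a given $r\in\rho(x)$) yields, via the combinatory term mapping $y\mapsto(\ovln{a}\cdot\sprAsm\,y)\cdot\fprAsm\,y$, a realizer of the right-hand side at $\pairing{b,r}\in(\psi\vee\rho)(x)$. For the right-to-left direction, from the family of realizers $\ovln{c}_{\pairing{b,r}}$ witnessing $\varphi\MuchnikOrder\psi\vee\rho$ one must produce, for each $r\in\rho(x)$, a single element $d\in\pca{A}$ mapping $\psi(x)$ into $\varphi(x)$; this uniformization is where it matters that $(\varphi\subtraction\psi)(x)$ ranges over the whole $\pca{A}$ rather than $\subpca{A}$, and otherwise the argument follows the Medvedev template.

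The new content is clause (6), the Heyting implication. The plan is to unfold both sides of the adjunction $\rho\MuchnikOrder(\varphi\rightarrow\psi)\iff(\rho\wedge\varphi)\MuchnikOrder\psi$: the right-hand side amounts to saying that every $b\in\psi(x)$ computes (via some $\ovln{a}\in\subpca{A}$) either an element of $\rho(x)$ (tagged with $p_1$) or an element of $\varphi(x)$ (tagged with $p_2$); the left-hand side, by the definition of $\varphi\rightarrow\psi$, says that whenever $b\in\psi(x)$ does not compute any element of $\varphi(x)$ it must compute some element of $\rho(x)$. Equivalence follows from a meta-theoretic case analysis on ``does $b$ compute some element of $\varphi(x)$?'', with the realizers glued back together using the pair/projection combinators as in (3). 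The main obstacle is precisely that this case split is not effective: the argument works here only because Muchnik reducibility allows the realizer to depend on $b$, and the analogous argument fails in the uniform Medvedev setting, which explains in hindsight why $\MedvedevDoctrine(X)$ is only co-Heyting whereas $\MuchnikDoctrine(X)$ is both Heyting and co-Heyting.
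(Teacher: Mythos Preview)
Your proposal is correct and follows essentially the same approach as the paper: items (1)--(5) are deferred to the Medvedev argument, and item (6) is established by unfolding the Heyting adjunction and invoking the classical equivalence $(\neg P\Rightarrow Q)\iff(P\lor Q)$, which is exactly the chain of biconditionals the paper writes out. Your remark on the uniformization step in (5) and your explanation of why the non-effective case split in (6) succeeds for Muchnik but would fail for Medvedev are useful glosses that the paper omits.
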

\begin{proof}
    The points (1)-(5) can be proved as in the proof of \thref{thm:medvedev_algebra}, so we only prove point (6). The argument is a straightforward generalization of \cite[Prop.\ 4.3]{Hinman2012}. We need to show that, for every $\rho$, $\rho \MuchnikOrder \varphi \rightarrow \psi \iff \varphi \wedge \rho \MuchnikOrder \psi$. Observe that,
    \begin{align*}
        \rho \MuchnikOrder \varphi\rightarrow \psi & \iff (\forall x \in X)(\forall b \in (\varphi\rightarrow \psi)(x))(\exists \ovln{a}\in\subpca{A})(\ovln{a}\cdot b \in \rho(x)) \\
            & \iff (\forall x \in X)(\forall b \in \psi(x))((\forall \ovln{a}\in\subpca{A})(\ovln{a}\cdot b \notin\varphi(x)) \Rightarrow (\exists \ovln{b}\in\subpca{A})(\ovln{b}\cdot b \in \rho(x)) ) \\
            & \iff (\forall x \in X)(\forall b \in \psi(x))((\exists \ovln{a}\in\subpca{A})(\ovln{a}\cdot b \in\varphi(x)) \lor (\exists \ovln{b}\in\subpca{A})(\ovln{b}\cdot b \in \rho(x)) ) \\
            & \iff (\forall x \in X)(\forall b \in \psi(x))(\exists \ovln{c}\in\subpca{A})( (\exists a \in \varphi(x))(\ovln{c}\cdot b = \pairing{p_1,a}) \\
            & \mathrel{\phantom{\iff}} \lor (\exists d \in \rho(x))(\ovln{c}\cdot b = \pairing{p_2,d})  )\\
            & \iff (\forall x \in X)(\forall b \in \psi(x))(\exists \ovln{c}\in\subpca{A})( \ovln{c}\cdot b \in \varphi\wedge \rho(x)) \\
            & \iff \varphi \wedge \rho \MuchnikOrder \psi.\tag*{\qedhere}
    \end{align*}
\end{proof}

It is straightforward to adapt the proof of \thref{thm:medvedev_iso} to show the following:

\begin{theorem}[Muchnik isomorphism]
    The Muchnik doctrine $\MuchnikDoctrine$ is isomorphic to the full universal completion $\compunfull{(\Mucpcadoctrine)}$.
\end{theorem}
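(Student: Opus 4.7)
The plan is to imitate the proof of \thref{thm:medvedev_iso}, with the same surjective map
\[ (f,\alpha)\mapsto \alpha\circ f^{-1} \]
from $\compunfull{(\Mucpcadoctrine)}(X)$ to $\MuchnikDoctrine(X)$, but keeping careful track of the one crucial difference: the realizer in $\Mucpcaorder$ (and in $\MuchnikOrder$) depends on the point in the base, whereas in the Medvedev/Turing case it was uniform. Concretely, unfolding the completion yields that $(f,\alpha)\lefullunicomp (g,\beta)$ in $\compunfull{(\Mucpcadoctrine)}(X)$ iff there is a map $h\function{Z}{Y}$ with $f\circ h = g$ and a family $(\ovln{a}_z)_{z\in Z}$ in $\subpca{A}$ such that $\ovln{a}_z\cdot \beta(z)=\alpha(h(z))$ for every $z\in Z$, and dually $\varphi\MuchnikOrder\psi$ iff for every $x\in X$ and every $b\in\psi(x)$ there exists $\ovln{a}_{x,b}\in\subpca{A}$ with $\ovln{a}_{x,b}\cdot b\in\varphi(x)$.

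First I would treat the forward direction. Given $(f,\alpha)\lefullunicomp (g,\beta)$ with witnesses $h$ and $(\ovln{a}_z)_{z\in Z}$, I fix $x\in X$ and $b\in\psi(x)=\beta(g^{-1}(x))$, pick any $z\in g^{-1}(x)$ with $\beta(z)=b$, and observe that $\ovln{a}_z\cdot b=\alpha(h(z))$ with $h(z)\in f^{-1}(x)$, since $f\circ h=g$. Hence $\ovln{a}_z\cdot b\in\alpha(f^{-1}(x))=\varphi(x)$, which is exactly the pointwise realizer required by $\MuchnikOrder$.

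Next, for the backward direction, I would start from $\varphi\MuchnikOrder\psi$ and build $(h,(\ovln{a}_z)_z)$ by a choice argument: for each $z\in Z$, setting $x_z:=g(z)$ and $b_z:=\beta(z)\in\psi(x_z)$, the assumption produces some $\ovln{a}_z\in\subpca{A}$ with $\ovln{a}_z\cdot b_z\in\varphi(x_z)=\alpha(f^{-1}(x_z))$, so I can pick $h(z)\in f^{-1}(x_z)$ with $\alpha(h(z))=\ovln{a}_z\cdot b_z$. Then $f\circ h=g$ by construction, and the family $(\ovln{a}_z)_{z\in Z}$ witnesses $(\Mucpcadoctrine)_h(\alpha)\Mucpcaorder\beta$, proving $(f,\alpha)\lefullunicomp (g,\beta)$.

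Finally, surjectivity of $(f,\alpha)\mapsto\alpha\circ f^{-1}$ is handled exactly as in \thref{thm:medvedev_iso}: every $\varphi\function{X}{\powerset(\pca{A})}$ is the image of the graph object $Y:=\{(x,a)\in X\times\pca{A}\st a\in\varphi(x)\}$ equipped with the two projections $\pi_X\function{Y}{X}$ and $\pi_{\pca{A}}\function{Y}{\pca{A}}$. Naturality of the homomorphism under reindexing $\MuchnikDoctrine_f$ and $(\compunfull{(\Mucpcadoctrine)})_f$ is also identical to the Medvedev case and requires no new ideas. I do not expect any real obstacle; the only subtle point is remembering to switch from a single uniform realizer to a pointwise family of realizers in both directions of the equivalence, which is precisely the transition from the Turing doctrine to the Muchnik doctrine of singletons.
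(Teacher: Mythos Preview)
Your proposal is correct and follows exactly the approach the paper indicates: the paper does not spell out a separate proof but simply says it is straightforward to adapt the proof of \thref{thm:medvedev_iso}, and your adaptation---replacing the single uniform realizer $\ovln{a}$ by a pointwise family $(\ovln{a}_z)_{z\in Z}$ throughout while keeping the same surjective assignment $(f,\alpha)\mapsto\alpha\circ f^{-1}$---is precisely that adaptation.
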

Observe that the Turing doctrine $\doctrine{\set}{\Mpcadoctrine}$ can be embedded in the Muchnik doctrine of singletons $\doctrine{\set}{\Mucpcadoctrine}$. Let us denote such a morphism of doctrines by $\arrow{\Mpcadoctrine}{ \mathfrak{I} }{\Mucpcadoctrine}$. 
By \thref{theorem full universal comp} we can employ the universal property of the full universal completion and conclude the following corollary.
\begin{corollary}
Let $\pca{A}$ be a given PCA and let $\pca{A}'$ be an elementary sub-PCA.
Then there exists a morphism of full universal doctrines such that the diagram
\[\xymatrix@+1pc{
& &\MedvedevDoctrine \ar@{-->}[d]\\
\Mpcadoctrine \ar[rru]^{\eta_{\Mpcadoctrine}^{\forall_{\mathsf{f}}}}\ar[r]_{\mathfrak{I}} & \Mucpcadoctrine \ar[r]_{\eta_{\Mucpcadoctrine}^{\forall_{\mathsf{f}}}} & \MuchnikDoctrine
}\]
commutes.
\end{corollary}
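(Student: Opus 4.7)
The plan is to construct the dashed morphism by a direct appeal to the universal property of the full universal completion, namely Theorem \thref{theorem full universal comp}, using the Medvedev and Muchnik isomorphisms established just above.

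First I would make the morphism $\mathfrak{I}$ explicit. Both $\Mpcadoctrine$ and $\Mucpcadoctrine$ are defined over $\set$ and share the underlying assignment $X \mapsto \pca{A}^X$, with reindexing in each case given by precomposition. The preorder $\Mucpcaorder$ is coarser than the uniform preorder $\Mpcaorder$, since any single realizer $\ovln{a}\in \subpca{A}$ witnessing $\alpha \Mpcaorder \beta$ also witnesses $\alpha \Mucpcaorder \beta$ pointwise. Thus $\mathfrak{I}$ is the pair consisting of the identity functor on $\set$ together with, on each fibre, the componentwise identity map, which descends to a monotone map between the poset reflections. Naturality with respect to reindexing is immediate since in both doctrines reindexing along $f$ is just precomposition with $f$.

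Next, by the Muchnik isomorphism we know $\MuchnikDoctrine \equiv \compunfull{\Mucpcadoctrine}$, so $\MuchnikDoctrine$ is a full universal doctrine, and the unit $\eta^{\forall_{\mathsf{f}}}_{\Mucpcadoctrine}\colon \Mucpcadoctrine \to \MuchnikDoctrine$ is a morphism of doctrines. Composing, I obtain $\mathfrak{L} := \eta^{\forall_{\mathsf{f}}}_{\Mucpcadoctrine} \circ \mathfrak{I}\colon \Mpcadoctrine \to \MuchnikDoctrine$, whose base functor is the identity on $\set$ and hence trivially sends every morphism of $\set$ (the class relevant to the \emph{full} universal completion) to one of the same class. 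Applying Theorem \thref{theorem full universal comp} to $\mathfrak{L}$ produces a unique (up to isomorphism) morphism of full universal doctrines $\widehat{\mathfrak{L}}\colon \compunfull{\Mpcadoctrine} \to \MuchnikDoctrine$ satisfying $\widehat{\mathfrak{L}} \circ \eta^{\forall_{\mathsf{f}}}_{\Mpcadoctrine} = \mathfrak{L} = \eta^{\forall_{\mathsf{f}}}_{\Mucpcadoctrine} \circ \mathfrak{I}$, which is exactly the commutativity asserted by the diagram in the corollary. Finally, transporting along the Medvedev isomorphism $\MedvedevDoctrine \equiv \compunfull{\Mpcadoctrine}$ converts $\widehat{\mathfrak{L}}$ into the desired dashed arrow $\MedvedevDoctrine \to \MuchnikDoctrine$.

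There is no genuinely hard step in this argument: the only thing to verify carefully is that $\mathfrak{I}$ is truly a morphism of doctrines, i.e.\ that the identity on $\pca{A}^X$ is monotone from $\Mpcaorder$ to $\Mucpcaorder$ and natural in $X$, both of which are immediate from the definitions. Once this bookkeeping is dispatched, the corollary is a formal consequence of the universal property stated in Theorem \thref{theorem full universal comp}, combined with the two isomorphism theorems previously established.
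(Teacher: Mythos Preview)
Your proposal is correct and follows essentially the same approach as the paper: the paper simply observes that $\mathfrak{I}$ is an embedding of $\Mpcadoctrine$ into $\Mucpcadoctrine$ and then invokes the universal property of the full universal completion (\thref{theorem full universal comp}), together with the Medvedev and Muchnik isomorphisms, to obtain the dashed arrow. Your write-up in fact spells out more detail than the paper does.
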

This corollary is the categorical version of how Medvedev's (strong) reducibility is the uniform version of Muchnik's (weak) reducibility.
\begin{remark}
    Notice that the results and considerations presented for Muchnik doctrines could be presented in a more general setting. In particular, if we consider doctrines with pointwise order, e.g.\ Muchnik doctrines and localic doctrines, the full universal completion of these doctrines is given precisely by the doctrine of their pointwise domination order (Smyth order~\cite{SMYTH197823}). Intuitively, the full universal completion can be seen as a generalization of the construction of the Smyth preorder on the powerset of a given preorder.
\end{remark}

We conclude this section by explicitly stating the following theorem that collects some already mentioned facts.

\begin{theorem}
    The Turing degrees, Medvedev degrees, and Muchnik degrees are isomorphic respectively to $\Mpcadoctrine(1)$, $\MedvedevDoctrine(1)$, and $\MuchnikDoctrine(1)$. 
\end{theorem}

\section{Weihrauch Doctrines}
\label{sec:weihrauch_doctrine}

Weihrauch reducibility is a notion of reducibility between computational problems that is useful to calibrate the uniform computational strength of a multi-valued function. It complements the analysis of mathematical theorems done in reverse mathematics, as multi-valued functions on represented spaces can be considered as realizers of theorems in a natural way. Weihrauch reducibility provides a framework where one can formalize questions such as ``which theorems can be transformed continuously or computably into another?'' \cite{Brattka_2011}. This provides a purely topological or computational approach to metamathematics that sheds new light on the nature of theorems. Given the connection between Medvedev and Weihrauch reducibility, it is natural to ask whether, with a similar approach as the one from the last section, we can also describe Weihrauch reducibility as the completion of some existential doctrine. 

We start by recalling the main objects of interest of Weihrauch reducibility, and by fixing the notation for the rest of this section:
\begin{definition}
A \textbf{(partial) multi-valued function} from a set $X$ to a set $Y$, written as $f\pmfunction{X}{Y}$, is a function $f\function{X}{\powerset(Y)}$ into the powerset of $Y$. The domain of $f$ is the set $\dom(f)=\{ x\in X \st f(x) \neq \emptyset\}$. Whenever $f(x)$ is a singleton for every $x\in\dom(f)$, we write $f(x)=y$ instead of $f(x)=\{y\}$. 
\end{definition}
It helps the intuition to think of multi-valued functions as computational problems, namely instance-solution pairs, where a single problem instance can have multiple solutions.

While Weihrauch reducibility is often introduced in the context of Type-$2$ computability, we will introduce it in the more general context of PCAs, as we did for the Medvedev reducibility relation.
\begin{definition}[Weihrauch reducibilty]
    \thlabel{def:wei_classic}
  Let $\pca{A}$ be a PCA, $\pca{A}'$ be an elementary sub-PCA of $\pca{A}$ and 
  let $f,g\pmfunction{\pca{A}}{\pca{A}}$ be partial multi-valued functions on $\pca{A}$. We say that $f$ is \textbf{Weihrauch reducible} to $g$, and write $f\weireducible g$, if there are $\ovln h, \ovln k\in \subpca{A}$ such that
   \[ (\forall G\vdash g)( \ovln{h}(\id, G\ovln{k})\vdash f ), \] 
   where $\ovln{h}(\id, G\ovln{k}) := p \mapsto \ovln{h} \cdot \pairing{p, G(\ovln{k}\cdot p)}$ and $G \vdash g$ means that $G\pfunction{\pca{A}}{\pca{A}}$ is such that, for every $p\in \dom(g)$, $G(p)\in g(p)$. This corresponds to saying that $G$ is a \emph{realizer}\footnote{The notion of realizers will be introduced more generally in \ref{def:rep_spaces_realizer} in the context of (multi-)represented spaces. The existence of a realizer for every multi-valued function depends on a (relatively) weak form of the axiom of choice.} of $g$. The functionals $\ovln k, \ovln h$ are sometimes referred to as \emph{forward} and \emph{backward} functional respectively.
\end{definition}

To properly introduce a Lawvere doctrine abstracting the notion of Weihrauch reducibility into the categorical setting, we start by defining a category that  plays the role of the base category of the doctrine. 
\begin{definition}
    Let $\pca{A}$ be a PCA, and let $\pca{A}'$ be an elementary sub-PCA of $\pca{A}$. We say that a function $f\function{X}{Y}$ is $\pca{A}'$\textbf{-computable}\footnote{This notation can be found in \cite[Sec.\ 1]{BIRKEDAL2002115}.} if there exists an element $a\in \pca{A}'$ such that for every element $x\in X$, $a\cdot x\downarrow$ and $f(x)=a\cdot x$. We avoid explicitly mentioning $\subpca{A}$ and simply write \emph{computable} whenever the elementary sub-PCA is clear from the context.
\end{definition}

Throughout this section, with a small abuse of notation, we will often replace a computable function $f$ with the element of the elementary sup-PCA that realizes it.

Observe that, since $\subpca{A}$ is an \emph{elementary} sub-PCA of $\pca{A}$, we have that the identity function is computable, and that the composition of computable functions is well-behaved. Hence, we can define the following category:
\begin{definition}\thlabel{def:PCA_cat}
    Let $\pca{A}$ be a PCA, and let $\pca{A}'$ be an elementary sub-PCA of $\pca{A}$. We denote\footnote{The choice of the notation $\PcaCat$ is motivated by the fact that, as we will see later in \thref{rem:partition_modests_is_PcaCat}, $\PcaCat$ is precisely the full subcategory of the category of modest sets which are partitioned.} by $\PcaCat (\pca{A},\subpca{A})$ the category whose objects are subsets $X\subseteq \pca{A}$. A morphism $f\in \PcaCat(\pca{A},\subpca{A})(X,Y)$ is a computable function $f\function{X}{Y}$.
\end{definition}

\begin{remark}
    Notice that the category $\PcaCat(\pca{A},\subpca{A})$ has finite products. This follows from the fact that $\pca{A}'$ is an \emph{elementary} sub-PCA of $\pca{A}$, and hence all the standard operations of pairing and projections, which allows us to define the categorical products, are computable morphisms. In particular, the categorical product between $X,Y\in\PcaCat(\pca{A},\subpca{A})$ is given by the set $X\times Y := \{ \pairing{x,y} \st x\in X \text{ and } y \in Y\}$ and the projections are the maps $\fprAsm, \sprAsm$ defined after \thref{def: PCA}. We also denote with $\ovln{1}$ the terminal object of $\PcaCat(\pca{A},\subpca{A})$, defined as $\ovln{1}:=\{ k \}$, where $k$ is the identity functional (\thref{def: PCA}).
\end{remark}

For our purposes, it is convenient to rewrite the ordinary definition of Weihrauch reducibility as follows:

\begin{definition}[Weihrauch reducibility]
\thlabel{def:wei_relation}
Let $X,Z$ be two object of $\PcaCat(\pca{A},\subpca{A})$. Given two functions $f\function{X}{\powerset^*(\pca{A})}$ and $g\function{Z}{\powerset^*(\pca{A})}$, $f\weireducible g$ if and only if there is a computable function $\ovln{k}\function{X}{Z}$ and an element $\ovln{h}\in\subpca{A}$ such that
\[ (\forall p\in X)(\forall q\in g(\ovln{k}\cdot p))(\ovln{h}\cdot \pairing{p,q}\in f(p)) . \] 
\end{definition}

It is well-known that there is a close connection between Weihrauch and Medvedev reducibility. Indeed, if $f\weireducible g$ via the functionals $\ovln{k}$ and $\ovln{h}$, then $\ovln{k}$ witnesses the reduction $\dom(g)\medvedevreducible \dom(f)$. Besides, for every $p\in \dom(f)$ there is a uniform continuous Medvedev reduction between $f(p)$ and $g(\ovln{k}\cdot p)$. Indeed, the map $q\mapsto \ovln{h}\cdot \pairing{p, q}$ maps every element of $g(\ovln{k}\cdot p)$ to some element of $f(p)$. If the map $\ovln{h}$ need not receive $p$ as part of its input, then $\ovln{h}$ would witness a uniform (computable) Medvedev reducibility between $f(p)$ and $g(\ovln{k}\cdot p)$. This stronger requirement leads to the notion of \emph{strong Weihrauch reducibility}, which will be studied more in detail in the next section.

Now we generalize the notion of Weihrauch reducibility similarly to what we did for the Medvedev reducibility. To this end, we introduce the following doctrine.

\begin{definition}[elementary Weihrauch doctrine]
\thlabel{def:Weihrauch doctrine of singletons}
    We define the \textbf{elementary Weihrauch doctrine} $\doctrine{\PcaCat(\pca{A},\subpca{A})}{\WeiElementary}$ as follows: for every object $X$ of $\PcaCat(\pca{A},\subpca{A})$, the objects $\WeiElementary(X)$ are functions $f\colon X\to  \powerset^*(\pca{A})$. For every pair of maps $f,g$ in $\powerset^*(\pca{A})^X$, we define $f \WeiOrderElementary g$ iff there is $\ovln{h}\in \subpca{A}$ such that 
    \[ (\forall p\in X)(\forall q\in g(p))(\ovln{h}\cdot \pairing{p,q}\in f(p)). \]

    This preorder induces an equivalence relation on functions in $\powerset^*(\pca{A})^X$. The doctrine $\WeiElementary(X)$ is defined as the quotient of $\powerset^*(\pca{A})^X$ by the equivalence relation generated by $\WeiOrderElementary$. The partial order on the equivalence class $[f]$ is the one induced by $\WeiOrderElementary$. The action of $\WeiElementary$ on the morphisms of $\PcaCat(\pca{A},\subpca{A})$ is defined by the pre-composition.
\end{definition}

Notice that a reduction $f \WeiOrderElementary g$ corresponds to a Weihrauch reducibility between $f$ and $g$ defined on the same set $X$, where the forward functional is witnessed by the identity functional. 

Observe that the doctrine $\WeiElementary$ is well-defined as a functor between $\PcaCat(\pca{A},\subpca{A})^{\mathrm{op}}$ and $\mathsf{Pos}$. Indeed, it is straightforward to check that, for every $X,Z \in\PcaCat(\pca{A},\subpca{A})$, every $f,g \in \WeiElementary(X)$, and every morphism $\ovln{k}\in \PcaCat(\pca{A},\subpca{A})(Z,X)$, any function $\ovln{h}\in\subpca{A}$ witnessing $f\WeiOrderElementary g$ is a witness for $f\circ \ovln{k} = \WeiElementary_{\ovln{k}}(f) \WeiOrderElementary \WeiElementary_{\ovln{k}}(g) = g\circ \ovln{k}$. While Weihrauch reductions are not, in general, preserved under pre-composition, the reduction $f \WeiOrderElementary g$ essentially establishes a ``layerwise'' relation between $f$ and $g$, namely to solve $f(x)$ we only need to look at $g(x)$. It is therefore apparent that this relation is not affected by pre-compositions.

\begin{proposition}\thlabel{prop:structures of elementary weihrauch doctrine}
The elementary Weihrauch doctrine $\doctrine{\PcaCat(\pca{A},\subpca{A})}{\WeiElementary}$ is a pure universal doctrine. In particular, for every product projection $\pi_Z\function{X\times Z}{Z}$, the morphism $\forall_{\pi_Z}\function{\WeiElementary(X\times Z)}{\WeiElementary(Z)}$ sending an element $f\in \WeiElementary(X\times Z)$ to the element $\forall_{\pi_Z} (f)\in \WeiElementary(Z)$ defined as
\[\forall_{\pi_Z}  (f)(z):= \bigcup_{x\in X} \left\{ \pairing{x,y} \st y \in f(x,z)  \right\}\]
is right-adjoint to $\WeiElementary_{\pi_Z}(g)$, i.e.\ for every $f\in \WeiElementary(X\times Z)$ and $g \in \WeiElementary(Z)$,
\[g \WeiOrderElementary \forall_{\pi_Z}(f)\iff \WeiElementary_{\pi_Z} (g)\WeiOrderElementary f.\]  
\end{proposition}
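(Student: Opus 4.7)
The plan is to check the two things that the notion of pure universal doctrine requires: (a) for every product projection $\pi_Z \colon X \times Z \to Z$, the prescribed assignment $f \mapsto \forall_{\pi_Z}(f)$ is a well-defined monotone map that is right adjoint to $\WeiElementary_{\pi_Z}$; and (b) the Beck--Chevalley condition holds for pullbacks of product projections. Before anything else I would observe that $\forall_{\pi_Z}(f)$ is a function into $\powerset^*(\pca{A})$: since $f(x,z) \neq \emptyset$ for all $(x,z)$, the set $\bigcup_{x \in X} \{\pairing{x,y} \st y \in f(x,z)\}$ is non-empty (assuming $X$ non-empty; the degenerate case is trivial). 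Monotonicity of $\forall_{\pi_Z}$ follows by re-packaging a realizer of $f \WeiOrderElementary f'$ using combinatory completeness of $\subpca{A}$.

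The main work is the adjunction. I would prove both implications simultaneously by unfolding definitions. The reduction $g \WeiOrderElementary \forall_{\pi_Z}(f)$ is witnessed by some $\ovln{h} \in \subpca{A}$ satisfying
\[
(\forall z \in Z)(\forall x \in X)(\forall y \in f(x,z))\; \ovln{h} \cdot \pairing{z, \pairing{x,y}} \in g(z).
\]
On the other side, $\WeiElementary_{\pi_Z}(g) = g \circ \pi_Z$, so $\WeiElementary_{\pi_Z}(g)(x,z) = g(z)$, and $\WeiElementary_{\pi_Z}(g) \WeiOrderElementary f$ is witnessed by some $\ovln{h}' \in \subpca{A}$ with
\[
(\forall (x,z) \in X \times Z)(\forall y \in f(x,z))\; \ovln{h}' \cdot \pairing{\pairing{x,z}, y} \in g(z).
\]
These two conditions are interchangeable by a computable re-association of pairs: from $\ovln{h}$ one defines $\ovln{h}'$ via $\ovln{h}' \cdot \pairing{\pairing{x,z},y} := \ovln{h} \cdot \pairing{z, \pairing{x,y}}$ (and symmetrically for the converse direction), which is available by combinatory completeness of $\subpca{A}$. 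This yields the biconditional $g \WeiOrderElementary \forall_{\pi_Z}(f) \iff \WeiElementary_{\pi_Z}(g) \WeiOrderElementary f$.

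For Beck--Chevalley, given a morphism $h \colon W \to Z$ in $\PcaCat(\pca{A},\subpca{A})$, the pullback of $\pi_Z \colon X \times Z \to Z$ along $h$ is $X \times W$, equipped with $\pi_W \colon X \times W \to W$ and $\id_X \times h \colon X \times W \to X \times Z$. For any $\beta \in \WeiElementary(X \times Z)$, a direct computation shows that both $\forall_{\pi_W}(\WeiElementary_{\id_X \times h}(\beta))(w)$ and $\WeiElementary_h(\forall_{\pi_Z}(\beta))(w)$ evaluate set-theoretically to $\bigcup_{x \in X}\{\pairing{x,y} \st y \in \beta(x, h(w))\}$, so equality (and a fortiori $\WeiOrderElementary$-equivalence) holds on the nose.

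I do not expect any serious obstacle: the argument is essentially a bookkeeping exercise, and the fact that the forward functional in $\WeiOrderElementary$ is fixed to the identity is what makes the adjunction clean. The only subtlety worth flagging is to make sure the computable re-associations between $\ovln{h}$ and $\ovln{h}'$ are carried out inside the elementary sub-PCA $\subpca{A}$ rather than merely in $\pca{A}$, which is guaranteed by the definition of elementary sub-PCA (\thref{def:sub-pca}).
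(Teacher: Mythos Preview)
Your proof is correct and follows essentially the same approach as the paper: both establish the adjunction by unfolding the two sides and using combinatory completeness of $\subpca{A}$ to re-associate pairs (your $\ovln{h}'\cdot\pairing{\pairing{x,z},y} := \ovln{h}\cdot\pairing{z,\pairing{x,y}}$ is exactly the paper's construction, up to renaming). You are in fact more thorough than the paper's own proof, which verifies only the adjunction biconditional and omits the checks of well-definedness, monotonicity, and Beck--Chevalley that you include.
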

\begin{proof}

    Assume first that $g \WeiOrderElementary \forall_{\pi_Z}(f)$ via $\ovln{a}$. Let $\ovln{b}\in\subpca{A}$ be defined as $\ovln{b}\cdot \pairing{\pairing{t,p},s} := \ovln{a}\cdot\pairing{p,\pairing{t,s}}$. We want to show that for every $(x,z)\in X\times Z$, 
    \[ (\forall y \in f(x,z))(\ovln{b}\cdot \pairing{\pairing{x,z},y} \in g\circ \pi_Z(x,z) = g(z)).\] 
    To this end, fix $(x,z)\in X\times Z$. Since $f$ is total on $X\times Z$, we only need to show that for every $y\in f(x,z)$, $\ovln{b}\cdot \pairing{\pairing{x,z},y} \in g(z)$. This follows immediately from the definition of $\ovln{b}$, as $g \WeiOrderElementary \forall_{\pi_Z}(f)$ and $\pairing{x,y}\in \forall_{\pi_Z}(f)(z)$.

    For the right-to-left direction, we proceed analogously: assume $\WeiElementary_{\pi_Z} (g)\WeiOrderElementary f$ via $\ovln{c}$. Define $\ovln{d}\cdot \pairing{p,\pairing{t,s}}:=\ovln{c}\cdot\pairing{\pairing{t,p},s}$. To show that $g \WeiOrderElementary \forall_{\pi_Z}(f)$, simply notice that for every $z\in Z$, and every $\pairing{x,y}\in \forall_{\pi_Z}(f)(z)$, the fact that $\WeiElementary_{\pi_Z} (g)\WeiOrderElementary f$ via $\ovln{c}$ implies that $\ovln{d}\cdot \pairing{z,\pairing{x,y}}=\ovln{c}\cdot\pairing{\pairing{x,z},y} \in \WeiElementary_{\pi_Z} (g)(x,z)= g(z)$.
\end{proof}

\medskip

Now we introduce the Weihrauch doctrine. To this end, we introduce first the notion of \emph{generalized Weihrauch predicate}.

\begin{definition}[Weihrauch predicate]
    A \textbf{generalized Weihrauch predicate} on $X\subseteq \pca{A}$ is a function
    \[F\function{X}{\powerset^*(\pca{A})^Y}\]
    for some $Y\subseteq \pca{A}$.
\end{definition}

Given a generalized Weihrauch predicate $F$, we denote by $F_x\function{Y}{\powerset^* (\pca{A})}$ the function $F(x)$. Moreover, with a small abuse of notation, we can write $F(x,y)$ for $F(x)(y)$.

\begin{definition}[Weihrauch doctrine]\thlabel{def:Weihrauch doctrine}
    Given a PCA $\pca{A}$ with elementary sub-PCA $\subpca{A}$, the \textbf{Weihrauch doctrine} is the functor $\doctrine{\PcaCat(\pca{A},\subpca{A})}{\WeiDoctrine}$ that maps $X\subseteq \pca{A}$ to the preorder $\WeiDoctrine(X)$ defined as follows:
    \begin{itemize}
        \item objects are generalized Weihrauch predicates on $X$;
        \item the partial order is given by the poset reflection of the preorder defined as follows: let $Y,Z\subseteq \pca{A}$. For every $F\function{X}{\powerset^*(\pca{A})^Y}$ and $G\function{X}{\powerset^*(\pca{A})^Z}$, we say that $F\WeiDoctrineOrder G$ if there exist a computable function $ \ovln{k}\function{X \times Y}{Z} $ and $\ovln{h}\in \subpca{A}$ such that
        \[ (\forall \pairing{x,y}\in X\times Y)(\forall q\in G(x,\ovln{k}\cdot \pairing{x,y}))(\ovln{h}\cdot \pairing{\pairing{x,y},q}\in F(x,y)).\]
        \end{itemize}
\end{definition}

Intuitively, if $F$ and $G$ are generalized Weihrauch predicates, the reduction $F \WeiDoctrineOrder G$ can be seen as a uniform sequence of Weihrauch reductions, one for each $x\in X$, all witnessed by the same reduction functionals. Observe that the doctrine is well-defined as pre-composition with a morphism $\ovln{j}\function{W}{X}$ preserves the reduction $F\WeiDoctrineOrder G$.
    
The following result is immediate.
\begin{theorem}[Weihrauch lattice]
    \thlabel{thm:weihrauch_lattice}
    The Weihrauch lattice is isomorphic to $\WeiDoctrine(\ovln{1})$.
\end{theorem}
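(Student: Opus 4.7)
The plan is to set up a bijection $F\mapsto F_k$ between representatives of $\WeiDoctrine(\ovln{1})$ and partial multi-valued functions $\pca{A}\rightrightarrows \pca{A}$, verify that it is a preorder homomorphism in both directions, and conclude the claimed isomorphism after passing to equivalence classes.

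First I would unfold the definition of $\WeiDoctrine(\ovln{1})$. Since $\ovln{1}=\{k\}$, an object $F$ of this fibre is determined by a single function $F_k\function{Y}{\powerset^*(\pca{A})}$ for some $Y\subseteq\pca{A}$, which is precisely the data of a partial multi-valued function $f\pmfunction{\pca{A}}{\pca{A}}$ with $\dom(f)=Y$ and $f(y)=F_k(y)$. Conversely, any such $f$ arises in this way, so $F\mapsto F_k$ is a bijection on representatives.

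Next, for $F,G\in\WeiDoctrine(\ovln{1})$ with underlying sets $Y$ and $Z$, the relation $F \WeiDoctrineOrder G$ asks for a morphism $\ovln{k}\in \PcaCat(\pca{A},\subpca{A})(\ovln{1}\times Y,\,Z)$ and an element $\ovln{h}\in\subpca{A}$ such that
\[(\forall y\in Y)(\forall q\in G_k(\ovln{k}\cdot\pairing{k,y}))(\ovln{h}\cdot\pairing{\pairing{k,y},q}\in F_k(y)).\]
Since $\ovln{1}\times Y$ is computably isomorphic to $Y$ via $\pairing{k,y}\leftrightarrow y$ (using that pairing with the constant $k\in\subpca{A}$ and projecting are computable operations), $\ovln{k}$ corresponds to a computable function $\tilde{k}\function{Y}{Z}$, while $\ovln{h}$ induces a computable backward map $(y,q)\mapsto \ovln{h}\cdot\pairing{\pairing{k,y},q}$ on $Y\times\pca{A}$. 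This is exactly the data witnessing $F_k\weireducible G_k$ in the sense of \thref{def:wei_relation} (equivalently \thref{def:wei_classic}), and the reverse translation is obtained by the same computable bijections, so the bijection $F\mapsto F_k$ is a two-sided homomorphism of preorders.

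Finally, since the bijection is a two-sided preorder homomorphism, it descends to a well-defined isomorphism of posets between the quotient $\WeiDoctrine(\ovln{1})$ and the Weihrauch lattice. The only step requiring any attention --- and the closest thing to an obstacle --- is verifying that morphisms in $\PcaCat(\pca{A},\subpca{A})$ coincide with the computable realizers appearing in the forward functional of a Weihrauch reduction, and that the trivial first coordinate $\ovln{1}=\{k\}$ can be harmlessly absorbed by pairing with $k$; both follow directly from $\subpca{A}$ being an elementary sub-PCA, so no serious difficulty arises.
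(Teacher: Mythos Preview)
Your proposal is correct and follows essentially the same approach as the paper: identify objects of $\WeiDoctrine(\ovln{1})$ with functions $Y\to\powerset^*(\pca{A})$ and check that the order $\WeiDoctrineOrder$ specializes to the Weihrauch reduction of \thref{def:wei_relation}. You are simply more explicit than the paper about absorbing the trivial $\ovln{1}$-coordinate via the computable isomorphism $\ovln{1}\times Y\cong Y$, which the paper leaves implicit.
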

\begin{proof}
    By definition of Weihrauch doctrine, the objects of the poset $\WeiDoctrine(\ovln{1})$ can be identified with functions $Y\to \powerset^*(\pca{A})$. Moreover, if $F\function{Y}{\powerset^*(\pca{A})}$ and $G\function{Z}{\powerset^*(\pca{A})}$ are in $\WeiDoctrine(\ovln{1})$ then $F\WeiDoctrineOrder G$ iff there are a computable function $\ovln{k}\function{Y}{Z}$ and $\ovln{h}\in\subpca{A}$ such that
     \[ (\forall y\in Y)(\forall q\in G(\ovln{k}\cdot y))(\ovln{h}\cdot \pairing{y,q}\in F(y)), \]
    which corresponds to \thref{def:wei_relation}.
\end{proof}

In particular, when working with Kleene's second model, $\WeiDoctrine(\ovln{1})$ yields precisely the classical Weihrauch degrees.

Now we show that the Weihrauch doctrine is isomorphic to the pure existential completion of the elementary Weihrauch doctrine. Recall that, by definition, the elements of $\compex{\WeiElementary}(X)$ are pairs of the form $(\pi,f)$, where $\pi\function{X\times Y}{X}$ is the projection on $X$ and $f\in \WeiElementary(X\times Y)$, i.e.\ $f\function{X\times Y}{\powerset^*{\pca(A)}}$. Moreover, 
\[ (\pi  ,f) \leexcomp (\pi',g) \iff (\exists \ovln{k}\in \PcaCat(\pca{A},\subpca{A})(X\times Y,Z))(f \WeiOrderElementary \WeiElementary_{\angbr{\pi_X}{k}}(g)), \]
where $\WeiElementary_{\angbr{\pi_X}{k}}(g) \colon= (x,y)\mapsto g(x,\ovln{k}(x,y))$.

\begin{theorem}[Weihrauch isomorphism]\thlabel{thm:Weihrauch isomorphism}
    The Weihrauch doctrine is isomorphic to the pure existential completion of the elementary Weihrauch doctrine. In symbols:
    \[\WeiDoctrine\equiv \compex{\WeiElementary}.\]
    In particular, fixed a PCA $\pca{A}$ with elementary sub-PCA $\subpca{A}$, for every $X\subseteq \pca{A}$ the map 
    \[(\pi_X,f)\mapsto F \]
    where $F\function{X}{\powerset^*(\pca{A})^Y}$ is defined as $F(x):=f(x,\cdot)$, is an isomorphism of posets  between $(\compex{\WeiElementary}(X),\leq_{\exists})$ and $(\WeiDoctrine(X),\WeiDoctrineOrder)$.
\end{theorem}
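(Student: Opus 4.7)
The plan is to show that ordinary currying provides a natural poset isomorphism between the fibres of $\compex{\WeiElementary}$ and $\WeiDoctrine$ over $\PcaCat(\pca{A}, \subpca{A})$. On representatives, send a pair $(\pi_X, f)$ with $f \colon X \times Y \to \powerset^*(\pca{A})$ to the generalized Weihrauch predicate $F \colon X \to \powerset^*(\pca{A})^Y$ defined by $F(x)(y) := f(x,y)$; conversely, send $F$ to $(\pi_X, f)$ with $f(x,y) := F(x)(y)$. For a fixed parameter set $Y$, these assignments are mutually inverse by ordinary set-theoretic currying.

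The core of the proof is that the two orders coincide under this bijection. Unfolding the order on $\compex{\WeiElementary}$ recalled immediately before the theorem, $(\pi_X, f) \leexcomp (\pi'_X, g)$ iff there is a computable $\ovln{k} \colon X \times Y \to Z$ such that $f \WeiOrderElementary \WeiElementary_{\angbr{\pi_X}{\ovln{k}}}(g)$. Unpacking further via \thref{def:Weihrauch doctrine of singletons}, this is equivalent to the existence of $\ovln{h} \in \subpca{A}$ with
\[ (\forall (x,y) \in X \times Y)(\forall q \in g(x, \ovln{k} \cdot \pairing{x,y}))(\ovln{h} \cdot \pairing{\pairing{x,y}, q} \in f(x,y)). \]
This is verbatim the unfolding of $F \WeiDoctrineOrder G$ given in \thref{def:Weihrauch doctrine} once we identify $F(x,y) := f(x,y)$ and $G(x,z) := g(x,z)$. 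Hence the bijection preserves and reflects order, descends to the anti-symmetric quotients, and yields a poset isomorphism in each fibre.

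For naturality in the base, given $\ovln{j} \colon W \to X$ in $\PcaCat(\pca{A}, \subpca{A})$, the pullback of $\pi_X \colon X \times Y \to X$ along $\ovln{j}$ is $\pi_W \colon W \times Y \to W$ with top arrow $\ovln{j} \times \mathrm{id}_Y$, so $(\compex{\WeiElementary})_{\ovln{j}}(\pi_X, f) = (\pi_W, f \circ (\ovln{j} \times \mathrm{id}_Y))$. Currying yields $w \mapsto (y \mapsto f(\ovln{j}(w), y))$, which equals $F \circ \ovln{j}$, i.e.\ $\WeiDoctrine_{\ovln{j}}(F)$. This closes the naturality square and upgrades the fibrewise isomorphisms to an isomorphism of doctrines.

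No serious obstacle is anticipated: the definitions of $\WeiDoctrine$ and $\WeiElementary$ were engineered so that pure existential completion, which adds projections as ``hidden'' variables on the left, recovers precisely the $Y$-parameter of a generalized Weihrauch predicate. The only point requiring care is the bookkeeping around equivalence classes and the identification of $\WeiElementary_{\angbr{\pi_X}{\ovln{k}}}(g)$ with the Weihrauch-style map $(x,y) \mapsto g(x, \ovln{k} \cdot \pairing{x,y})$, both of which reduce to direct definition-chasing.
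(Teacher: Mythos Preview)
Your proposal is correct and follows essentially the same approach as the paper: both unfold the definition of $\leexcomp$ to obtain exactly the condition defining $\WeiDoctrineOrder$, concluding that the currying map preserves and reflects the order. The paper argues surjectivity of the assignment (rather than exhibiting the explicit uncurrying inverse you give) to deduce the poset isomorphism, and it omits the naturality check you supply; your additional verification that reindexing commutes with currying is a welcome detail that the paper leaves implicit.
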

\begin{proof}
    We first show that the assignment $(\pi_X,f)\mapsto F$ preserves and reflects the order. Let $(\pi ,f)$ and $(\pi',g)$ be two elements of $\compex{\WeiElementary}(X)$. By expanding the definition of $\WeiOrderElementary$ and writing $g(x,k(x,y))$ in place of $ \WeiElementary_{\angbr{\pi_X}{k}}(g)(x,y)$, we obtain that $(\pi  ,f) \leexcomp (\pi',g) $ iff there is a morphism $\ovln{k}\function{X\times Y}{Z}$ and $\ovln{h}\in\subpca{A}$ such that 
    \[ (\forall (x,y) \in X\times Y)(\forall q \in g(x,\ovln{k}\cdot\pairing{x,y}) )(\ovln{h}\cdot \pairing{\pairing{x,y},q} \in f(x,y))\tag{$\star$}\] 
    Let $F\function{X}{\powerset^*(\pca{A})^Y}$ and $G\function{X}{\powerset^* (\pca{A})^Z}$ be the images of $(\pi,f)$ and $(\pi',g)$ respectively. Observe that, writing $F(x,y)=f(x,y)$ (analogously for $G$) and substituting $F$ and $G$ in ($\star$), it is straightforward to check that $(\pi, f) \leexcomp (\pi',g)$ iff $F\WeiDoctrineOrder G$ (see \thref{def:Weihrauch doctrine}).
    
    This shows that the embedding preserves and reflects the partial order. Finally, it is easy to see that the map $(\pi_X,f) \mapsto F$ is surjective, since any function $H\function{X}{\powerset^* (\pca{A})^V}$ can be obtained via our embedding as the image of the pair $(\pr_X,h)$, where $h\function{X\times V}{\powerset^* (\pca{A})}$ is defined as $h (x,v):=H(x,v)$. Therefore, since the map is a surjective homomorphism of posets which also reflects the order, we can conclude that it is an isomorphism.
\end{proof} 

\subsection{Strong Weihrauch Doctrines}
\label{sec:strong_wei_doctrine}
We now show that we can adapt the techniques of the previous section to provide a categorical presentation of \emph{strong Weihrauch reducibility}. 

\begin{definition}[Strong Weihrauch reducibilty]
    \thlabel{def:wei_strong}
  Let $\pca{A}$ be a PCA, $\pca{A}'$ be an elementary sub-PCA of $\pca{A}$ and let $f, g\pmfunction{\pca{A}}{\pca{A}}$ be partial multi-valued functions on $\pca{A}$. We say that $f$ is \textbf{strongly Weihrauch reducible} to $g$, and write $f\strongweireducible g$, if there are $\ovln{h},\ovln{k}\in\subpca{A}$ such that
\[ (\forall G \vdash g)(\ovln{h} G \ovln{k} \vdash f).   \]

  \end{definition}
As anticipated, the intuition behind the strong Weihrauch reducibility is that a Weihrauch reduction $f\weireducible g$ is strong if the backward functional $\ovln{h}$ does not have access to the original input for $f$, but only to the solution of $g$. 

Analogously to what already observed in the context of Weihrauch reducibility, if $\ovln{h}$ and $\ovln{k}$ witness the reduction $f\strongweireducible g$ then $\ovln{h}$ witnesses the reduction $\dom(g)\medvedevreducible \dom(f)$. However, unlike what happens in the case of Weihrauch reducibility, the backward functional $\ovln{k}$ now witnesses a uniform Medvedev reduction between $f(p)$ and $g(\ovln{k}\cdot p)$.

We observe that the results of the previous subsection can be adapted to show that the \textbf{strong Weihrauch degrees} are isomorphic to the pure existential completion of another doctrine.
Again, it is convenient to rewrite the ordinary definition of strong Weihrauch reducibility as follows: 

\begin{definition}[Strong Weihrauch reducibility]
\thlabel{def:st_wei_relation}
Let $X,Z$ be two objects of $\PcaCat(\pca{A},\subpca{A})$. Given two functions $f\function{X}{\powerset^*(\pca{A})}$ and $g\function{Z}{\powerset^*(\pca{A})}$ then $f\strongweireducible g$ if and only if there is a computable function $\ovln{k}\function{X}{Z}$ and an element $\ovln{h}\in\subpca{A}$ such that
\[ (\forall p\in X)(\forall q\in g(\ovln{k}\cdot p))(\ovln{h}\cdot q\in f(p)) . \] 
\end{definition}

Now we adapt the definition of the (elementary) Weihrauch doctrine requiring that the map $\ovln{h}$ does not have access to the original input $p$.

\begin{definition}
    Given a PCA $\pca{A}$ with elementary sub-PCA $\subpca{A}$, we define the \textbf{elementary strong Weihrauch doctrine} $\doctrine{\PcaCat(\pca{A},\subpca{A})}{\strWeiElementary}$ as follows. 
    For every $X\subseteq \pca{A}$ and every pair of functions $f,g \in \powerset^*(\pca{A})^X$, we define $f \strWeiOrder g$ iff there is $\ovln{h}\in \subpca{A}$ such that 
    \[ (\forall p\in X)(\forall q\in g(p))(\ovln{h}\cdot q\in f(p)). \]

    This preorder induces an equivalence relation on functions in $\powerset^*(\pca{A})^X$. The doctrine $\strWeiElementary(X)$ is defined as the quotient of $\powerset^*(\pca{A})^X$ by the equivalence relation generated by $\strWeiOrder$. The partial order on the equivalence classes $[f]$ is the one induced by $\strWeiOrder$. 
\end{definition}

\begin{definition}[Strong Weihrauch doctrine]\thlabel{def: Strong Weihrauch doctrine}
    Given a PCA $\pca{A}$ with elementary sub-PCA $\subpca{A}$, the \textbf{strong Weihrauch doctrine} is the functor $\doctrine{\PcaCat(\pca{A},\subpca{A})}{\newstrWeiDoctrine}$ that maps $X\subseteq \pca{A}$ to the preorder $\newstrWeiDoctrine(X)$ defined as follows:
    \begin{itemize}
        \item objects are generalized Weihrauch predicates on $X$;
        \item the partial order is given by the poset reflection of the preorder defined as follows: let $Y,Z\subseteq \pca{A}$. We say that  $F\newstrWeiOrder G$, where $F\function{X}{\powerset^* (\pca{A})^Y}$ and $G\function{X}{\powerset^* (\pca{A})^Z}$, if there exists a computable function $ \ovln{k}\function{X \times Y}{Z} $ and $\ovln{h}\in \subpca{A}$ such that
        \[ (\forall \pairing{x,y} \in X\times Y)(\forall q\in G(x, \ovln{k}\cdot \pairing{x,y}))(\bar{h}\cdot q\in F(x,y)). \] 
        
        \end{itemize}
\end{definition}

In light of \thref{thm:weihrauch_lattice} and \thref{thm:Weihrauch isomorphism}, the following results are straightforward:

\begin{theorem}
    The strong Weihrauch lattice is isomorphic to $\newstrWeiDoctrine(\ovln{1})$. 
\end{theorem}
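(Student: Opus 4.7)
The plan is to mirror the proof of \thref{thm:weihrauch_lattice} almost verbatim, replacing the ordinary Weihrauch reduction by the strong one. First I would observe that, by \thref{def: Strong Weihrauch doctrine}, the objects of $\newstrWeiDoctrine(\ovln{1})$ are generalized Weihrauch predicates on the terminal object $\ovln{1}$, i.e.\ functions $\ovln{1} \to \powerset^*(\pca{A})^Y$ for some $Y \subseteq \pca{A}$. Since $\ovln{1}$ is a singleton, such a predicate is naturally identified with a function $F \colon Y \to \powerset^*(\pca{A})$, and the projection $\ovln{1} \times Y \to Y$ is a computable isomorphism. This matches exactly the data of a partial multi-valued function $f \pmfunction{Y}{\pca{A}}$ that appears in \thref{def:wei_strong}.

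Next I would unfold the definition of $\newstrWeiOrder$ on $\newstrWeiDoctrine(\ovln{1})$. For $F \colon Y \to \powerset^*(\pca{A})$ and $G \colon Z \to \powerset^*(\pca{A})$ viewed inside $\newstrWeiDoctrine(\ovln{1})$, the relation $F \newstrWeiOrder G$ asks for a computable $\ovln{k} \colon \ovln{1} \times Y \to Z$ and $\ovln{h} \in \subpca{A}$ such that
\[ (\forall \pairing{x,y} \in \ovln{1}\times Y)(\forall q \in G(\ovln{k}\cdot \pairing{x,y}))(\ovln{h}\cdot q \in F(y)).\]
Using that $\ovln{1}$ is terminal, $\ovln{k}$ is equivalent to a computable map $Y \to Z$, and the condition becomes
\[ (\forall y \in Y)(\forall q \in G(\ovln{k}\cdot y))(\ovln{h}\cdot q \in F(y)), \]
which is exactly the relation $F \strongweireducible G$ in the rewritten form of \thref{def:st_wei_relation}. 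Hence the identification of objects above is an order isomorphism between $\newstrWeiDoctrine(\ovln{1})$ and the preorder of partial multi-valued functions on (subsets of) $\pca{A}$ modulo strong Weihrauch equivalence, which by definition is the strong Weihrauch lattice.

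I do not expect any real obstacle: the entire argument is a bookkeeping exercise exploiting that $\ovln{1}$ is terminal, so the ``parametric'' reduction in the fibre over $\ovln{1}$ collapses to the plain reduction, and the absence of $p$ in $\ovln{h}\cdot q$ (as opposed to $\ovln{h}\cdot \pairing{p,q}$) is precisely what distinguishes \thref{def: Strong Weihrauch doctrine} from \thref{def:Weihrauch doctrine} and matches the strong version of the reducibility. The only minor care needed is to check that the equivalence relation used in the poset reflection of $\newstrWeiDoctrine(\ovln{1})$ coincides with strong Weihrauch equivalence, which is immediate from symmetry of the above unfolding.
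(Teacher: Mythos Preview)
Your proposal is correct and follows exactly the approach the paper intends: the paper does not spell out a proof but simply notes that the result is straightforward in light of \thref{thm:weihrauch_lattice}, and your argument is precisely the strong-Weihrauch analogue of that proof, unfolding \thref{def: Strong Weihrauch doctrine} over $\ovln{1}$ and matching it with \thref{def:st_wei_relation}.
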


In particular, the (classical) strong Weihrauch degrees correspond to $\newstrWeiDoctrine(\ovln{1})$, when working with Kleene's second model.

\begin{theorem}[Strong Weihrauch isomorphism]\thlabel{thm:strong Weihrauch isomorphism}
Let $\pca{A}$ be a PCA and let $\pca{A}'$ be an elementary sub-PCA of $\pca{A}$. The strong Weihrauch doctrine 
is isomorphic to the pure existential completion
of the elementary strong Weihrauch doctrine. In symbols:
\[\newstrWeiDoctrine\equiv \compex{\strWeiElementary}.\]
\end{theorem}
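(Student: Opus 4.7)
The plan is to mimic the proof of \thref{thm:Weihrauch isomorphism} almost verbatim, with the only modification being the form of the backward functional. On each fibre I would consider the map $(\pi_X,f)\mapsto F$, where $F\function{X}{\powerset^*(\pca{A})^Y}$ is the generalized Weihrauch predicate defined by $F(x)(y):=f(x,y)$, and show that it is an isomorphism of posets between $(\compex{\strWeiElementary}(X),\leexcomp)$ and $(\newstrWeiDoctrine(X),\newstrWeiOrder)$.

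First I would unwind the definition of the pure existential completion applied to $\strWeiElementary$. An object of $\compex{\strWeiElementary}(X)$ is a pair $(\pi_X,f)$ with $\pi_X\function{X\times Y}{X}$ the projection and $f\in\strWeiElementary(X\times Y)$, i.e.\ $f\function{X\times Y}{\powerset^*(\pca{A})}$. Given $(\pi_X,f)$ and $(\pi_X',g)$ with $g\function{X\times Z}{\powerset^*(\pca{A})}$, the order $(\pi_X,f)\leexcomp(\pi_X',g)$ unfolds to the existence of a morphism $\ovln{k}\function{X\times Y}{Z}$ of $\PcaCat(\pca{A},\subpca{A})$ with $f\strWeiOrder \strWeiElementary_{\angbr{\pi_X}{k}}(g)$; expanding $\strWeiOrder$ (recalling that there the backward witness acts on $q$ alone) gives the condition that there exist a computable $\ovln{k}\function{X\times Y}{Z}$ and $\ovln{h}\in\subpca{A}$ such that
\[ (\forall (x,y)\in X\times Y)(\forall q\in g(x,\ovln{k}\cdot\pairing{x,y}))(\ovln{h}\cdot q\in f(x,y)). \]
This is precisely the condition $F\newstrWeiOrder G$ in \thref{def: Strong Weihrauch doctrine}, once we identify $F(x,y)=f(x,y)$ and $G(x,z)=g(x,z)$. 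Hence the map preserves and reflects the order. Surjectivity is then immediate: any generalized Weihrauch predicate $H\function{X}{\powerset^*(\pca{A})^V}$ arises as the image of $(\pi_X,h)$ where $h\function{X\times V}{\powerset^*(\pca{A})}$ is defined by $h(x,v):=H(x,v)$. Naturality in $X$ (commutation with reindexing by morphisms of the base) follows from exactly the same computation as in \thref{thm:Weihrauch isomorphism}, since both constructions act on the first component only.

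The main and essentially only point of care is the bookkeeping of the match between the two orders: the pure existential completion wraps around $\strWeiElementary$ the projection structure $X\times Y\to X$, and one must verify that the restricted access of $\ovln{h}$ (to $q$ rather than to $\pairing{\pairing{x,y},q}$) required by the strong variant is preserved by this wrapping. Because $\strWeiOrder$ is designed to encode precisely this restricted access, the match is automatic, and the argument reduces to a notational rerun of the Weihrauch case, with $\ovln{h}\cdot\pairing{\pairing{x,y},q}$ everywhere replaced by $\ovln{h}\cdot q$.
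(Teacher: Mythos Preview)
Your proposal is correct and follows exactly the approach the paper intends: the paper gives no explicit proof, merely stating that the result is straightforward in light of \thref{thm:weihrauch_lattice} and \thref{thm:Weihrauch isomorphism}, and you have correctly spelled out the obvious adaptation, replacing $\ovln{h}\cdot\pairing{\pairing{x,y},q}$ by $\ovln{h}\cdot q$ throughout.
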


\section{Generalizations of Weihrauch reducibility}
\label{sec:gen_weihrauch}

Weihrauch reducibility is often introduced in the more general context of partial multi-valued functions on represented spaces. We now briefly recall a few definitions in the context of computable analysis. For a more thorough presentation, the reader is referred to \cite{BGP17,Weihrauch00}.

\begin{definition}[represented space]\thlabel{def:represented space}
A \textbf{represented space} is a pair $(X, \delta_X)$, where $X$ is a set and $\delta_X\pfunction{\Baire}{X}$ is a surjective (partial) map called a \textbf{representation map}.
\end{definition}

For any given $x$ in $X$, the set $\delta_X^{-1}(x)$ is the set of $\delta_X$-\textbf{names} or $\delta_X$-\textbf{codes} for $x$. We avoid mentioning explicitly the representation map whenever there is no ambiguity.

\begin{definition}[realizer]\thlabel{def:rep_spaces_realizer}
Let $f\pmfunction{X}{Y}$ be a multi-valued function between the represented spaces $(X,\repmap{X})$ and $(Y,\repmap{Y})$. A \textbf{realizer} $F$ for $f$ (we write $F \vdash f$) is a function $F\pfunction{\Baire}{\Baire}$ such that  for all $p$ in the domain of $(f\circ \repmap{X})$ we have that $ (\repmap{Y}(F(p))\in f(\repmap{X}(p)))$. 
\end{definition}

Realizers are useful as they help us transfer properties of functions on the Baire space (such as computability or continuity) to multi-valued functions on the represented spaces. \thref{def:wei_classic} can be immediately extended to the case of multi-valued functions on represented spaces. We rewrite the definition here for the sake of readability.

\begin{definition}[Weihrauch reducibility for represented spaces]\thlabel{def:wei_classic_rep_spaces}
 Let $X,Y,Z,W$ be represented spaces and let $f\pmfunction{X}{Y}$ $g\pmfunction{Z}{W}$ be partial multi-valued functions. We say that $f$ is \textbf{Weihrauch reducible} to $g$, and write $f\weireducible g$, if there are two computable functionals $\ovln{h}, \ovln{k}\in \subpca{A}$ such that 
 \[ (\forall G\vdash g)( \ovln{h}(\id, G\ovln{k})\vdash f ), \] 
 where $\ovln{h}(\id, G\ovln{k}) := p \mapsto \ovln{h} \cdot \pairing{p, G(\ovln{k}\cdot p)}$ and $G \vdash g$ means that $G$ is a realizer of $g$ (as defined in \thref{def:rep_spaces_realizer}).
\end{definition}

Observe that the notion of Weihrauch reducibility for represented spaces depends on the representation maps. Indeed, expanding the previous definition, we obtain that $f\weireducible g$ iff if there are two computable functionals $\ovln{h}, \ovln{k}\in \subpca{A}$ s.t.\ 
\[ (\forall p\in\dom(f \circ \delta_X))( \ovln{k}\cdot p \in\dom(g\circ \delta_Z) \text{ and } (\forall q \in g\circ\delta_Z(\ovln{k}\cdot p))( \ovln{h}\cdot \pairing{p,q} \in f\circ \delta_X(p) )). \] 

While it is well-known that every Weihrauch degree has a representative that is a partial multi-valued function on $\pca{A}$, it is quite natural to ask whether the results of the previous sections could have been presented for doctrines whose base category is one of represented spaces, i.e.\  modest sets, or more generally, assemblies. 

The main purpose of this section is to show how our approach can be adapted to these settings. As we will explain in detail later, there are however some non-trivial technical obstacles in developing the theory using the ordinary categories of assemblies and modest sets. In particular, such obstacles regard the notion of morphism of assemblies which, as it is usually defined, is not able to properly support the notion of Weihrauch reducibility in the context of doctrines.

\subsection{Assemblies and Modest sets}\label{sec:rep_spaces}
We briefly recall some useful definitions regarding the notion of \emph{assembly} and \emph{modest sets} (in relative realizability), explaining how these can be seen as a categorification (and a generalization) of the notion of represented space. We refer to \cite{van_Oosten_realizability} for a complete presentation of these notions, and to \cite{Bauer2021,Frey2014AFS,HYLAND1988,BIRKEDAL2002115} for more specific applications. In the following definition, we mainly follow the notation used in \cite{Bauer2021}, which is closer to the usual used in computability.

\begin{definition}[Assemblies and Modest sets]\thlabel{def:assemblies}
Let $\pca{A}$ be a PCA. An \textbf{assembly} is a pair $X=(\barAsm{X},\relAsm{X})$ where $\barAsm{X}$ is a set and $\relAsm{X}\subseteq \pca{A}\times \barAsm{X}$ is a total relation, i.e.\ $(\forall x\in \barAsm{X})(\exists r\in \pca{A})( r\relAsm{X}x)$. An assembly $X$ is a \textbf{modest set} if  $r\relAsm{X}y$ and $ r\relAsm{X}x$ implies $x=y$, i.e.\ no two elements share a name. An assembly $X$  is \textbf{partitioned} if  $r\relAsm{X}x$ and  $r'\relAsm{X}x$ implies $r=r'$, i.e.\ every element has exactly one name.

\end{definition}
Given an assembly $X$, we will denote by $\support{X}$ the \emph{support} of $X$, namely the set $\support{X}:=\{a\in \pca{A}\st (\exists x\in \barAsm{X})( a\relAsm{X} x)\}$.

Clearly, every represented space  $(X,\repmap{X})$ (\thref{def:represented space}) gives rise to a unique modest set as in \thref{def:assemblies}, and vice-versa. In other words, the names ``modest set'' and ``represented space'' can be used interchangeably. The notion of realizer for represented spaces (\thref{def:rep_spaces_realizer}) can be straightforwardly extended to the notion of realizer for assemblies as follows: 

\begin{definition}[realizer for assemblies]\thlabel{def:asm_realizer}
Let $X$ and $Y$ be two assemblies, and let  $f\pmfunction{\barAsm{X}}{\barAsm{Y}}$ be a multi-valued function between their underlying sets. A \textbf{realizer} $F$ for $f$ (we write $F \vdash f$) is a (partial) function $F\pfunction{\pca{A}}{\pca{A}}$ such that  for all $r\relAsm{X}x$ implies $r\in \dom (F)$ and  $F(r)\relAsm{Y} f(x)$. 
\end{definition}

\begin{remark}\thlabel{rem:multi_func_modest_has_real}
    As mentioned, if $X$ and $Y$ are modest sets then every multi-valued function  $f\pmfunction{\barAsm{X}}{\barAsm{Y}}$ has a realizer (assuming the axiom of choice). 
    However, not every multi-valued function between assemblies admits a realizer.
\end{remark}
\begin{definition}[Morphism of assemblies]
Let $\pca{A}$ be a PCA and $\subpca{A}\subseteq \pca{A}$ be an elementary sub-PCA of $\pca{A}$.
A \textbf{morphism of assemblies} $f\function{X}{Y}$ is a function $f\function{\barAsm{X}}{\barAsm{Y}}$ which has a realizer in $\pca{A}'$, i.e.\ it is a function such that there exists an element $a\in \pca{A}'$ with the property that for every $r\relAsm{X}x$ then   $a\cdot r\downarrow$ and $a\cdot r\relAsm{X} f(x)$.
\end{definition}

\begin{definition}
Let $\pca{A}$ be a PCA and $\subpca{A}$ be an elementary sub-PCA of $\pca{A}$. We define the category of \textbf{assemblies} $\Assemblies(\pca{A},\subpca{A})$ as the subcategory of $ \Assemblies(\pca{A})$ having assemblies as objects and where morphisms are morphisms of assemblies. The category of \textbf{modest sets} is defined as the full subcategory $\Modest(\pca{A},\subpca{A})$ of $\Assemblies(\pca{A},\subpca{A})$ whose objects are modest sets. The category of \textbf{partitioned assemblies} $\partAsm(\pca{A},\subpca{A})$ is the full subcategory of $\Assemblies(\pca{A},\subpca{A})$ whose objects are partitioned assemblies.
\end{definition}

\begin{remark}\thlabel{rem:partition_modests_is_PcaCat}
    Observe that the full subcategory of $\Modest(\pca{A},\subpca{A})$ whose objects are partitioned, modest sets, corresponds exactly to the category $\PcaCat(\pca{A},\subpca{A})$ defined in \thref{def:PCA_cat}.
\end{remark}

\subsection{Realizer-based Weihrauch reducibility}
The notion of Weihrauch reducibility for assemblies has been introduced only very recently in \cite{Kihara2022rethinking, SchroederCCA2022}. While the definition is formally the same as \thref{def:wei_classic_rep_spaces}, there are some important differences with the classical notion of Weihrauch reducibility for represented spaces.

\begin{definition}[Realizer-based Weihrauch reducibility \cite{SchroederCCA2022}]
    \thlabel{def:rwei_relation}
    Let $\pca{A}$ be a PCA and $\subpca{A}\subseteq \pca{A}$ be an elementary sub-PCA of $\pca{A}$.
    Let $X,Y,Z,W$ be assemblies and let $f\pmfunction{\barAsm{X}}{\barAsm{Y}}$ and $g\pmfunction{\barAsm{Z}}{\barAsm{W}}$ be partial multi-valued functions. We say that $f$ is \textbf{realizer-based Weihrauch reducible} to $g$, and write $f \rweireducibile g$, if there are $\ovln{h}, \ovln{k}\in \subpca{A}$ such that 
    \[ (\forall G\vdash g)( \ovln{h}(\id, G\ovln{k})\vdash f ), \] 
    where $\ovln{h}(\id, G\ovln{k}) := p \mapsto \ovln{h} \cdot \pairing{p, G(\ovln{k}\cdot p)}$ and where $G \vdash g$ means that $G$ is a realizer of $g$ (\thref{def:asm_realizer}).

\end{definition}
\begin{remark}\thlabel{rem:wei_on_modest_is_part_case}
Notice that this coincides trivially with the standard definition of Weihrauch reducibility for represented spaces (see \thref{def:wei_classic_rep_spaces}) whenever $X,Y,Z,W$ are modest sets. However, while being a modest set is a special case of being an assembly, the notion of realizer for multi-valued functions on modest sets is ``weaker'' than the corresponding notion for assemblies. In fact, while every multi-valued function on modest sets has a realizer, the same is not true for multi-valued functions on assemblies, as mentioned in \thref{rem:multi_func_modest_has_real}. 
\end{remark}

The definition of realizer-based Weihrauch reducibility corresponds precisely to the generalization of Weihrauch reducibility for multi-represented spaces introduced in \cite[Def.\ 5.1]{Kihara2022rethinking}. For the sake of readability, this definition can be restated as follows: $f\rweireducibile g$ iff there are  $\ovln{h}, \ovln{k}\in \subpca{A}$ such that:

\begin{itemize}
    \item for every $x\in \dom(f)$ and $p\relAsm{X}x$, $\ovln{k}\cdot p\downarrow$ and there is $z\in \dom(g)$ such that $\ovln{k}\cdot p \relAsm{X} z$ and 
    \item for every $w\in g(z)$ and $q\relAsm{W}w$ there exists $\ovln{h}\cdot\pairing{p,q} \relAsm{Y} y$ such that $y\in f(x)$.
\end{itemize}

The variable $z$ in the second line is the same variable existentially quantified in the first line.

The previous description of realizer-based Weihrauch reducibility highlights a crucial aspect, namely that the forward functional $\ovln{k}$ witnessing the reduction $f\rweireducibile g$, in general, is not the realizer of a function between (the underlying sets of) the assemblies, i.e.\ it is not part of a morphism of assemblies. This is because different names for the same $x\in\dom(f)$ could be mapped to names for \emph{different} $z\in\dom(g)$ while, by definition, a morphism of assemblies has to send names of a given $x$ into names of another element $z$. This simple, but crucial fact, provides a serious technical obstacle in trying to adopt the ordinary categories of assemblies and modest sets as base categories for defining new doctrines abstracting realizer-based Weihrauch reducibility.

This observation would suggest that, to present a doctrine abstracting the notion of Weihrauch reducibility for represented spaces and assemblies, a natural choice could be that of considering a category whose objects are represented spaces and whose morphisms are multi-valued functions. However, this approach does not align well with the categorical framework of doctrines, primarily because represented spaces and multi-valued functions lack the structure of cartesian products.

Indeed, the category \textit{Mult} of multi-valued functions between sets has been studied in \cite{Pauly17Multi}. In \cite[Sec.\ 3.1]{Pauly17Multi}, Pauly observes that the category \textit{Mult} has a categorical product, but this product does not preserve computability, and so it does not work well for our purposes.

In summary, the challenge in extending our approach to assemblies and modest sets lies in defining a new notion of  ``morphism of assemblies'', so as to obtain a new category with cartesian products whose morphisms allow for a categorical presentation of realizer-based Weihrauch reducibility.

To this end, we introduce the following category $\extPcaCat(\pca{A},\subpca{A})$  where, as in the case of Weihrauch reducibility, the notion of morphism of the category is inspired by the properties of the forward functional. It will be then observed in \thref{rem_exAsm_equiv_parAsm} that such a change in the notion of morphisms between assemblies causes the new category to collapse into the category of partition assemblies.
Using $\extPcaCat(\pca{A},\subpca{A})$  will allow us to give a unified categorical presentation of $\rweireducibile$ (and later, also to extended Weihrauch reducibility $\extweireducible$). While the category of partitioned assemblies is well-studied in the literature, for the purposes of this paper, it is more convenient to work with the category $\extPcaCat(\pca{A},\subpca{A})$. This choice makes the connection between the realizer-based/extended Weihrauch reducibility and the corresponding doctrines apparent, and it renders some of the following proofs straightforward.

\begin{definition}\thlabel{def:cat_ext_pca}
    Let $\pca{A}$ be a PCA, and let $\pca{A}'$ be an elementary sub-PCA of $\pca{A}$. We denote by $\extPcaCat(\pca{A},\subpca{A})$ the category defined as follows:
    \begin{itemize}
        \item objects are assemblies  $X=(\barAsm{X}, \name_X)$;
        \item a morphism between $X$ and $Y$ is a pair $(a,\varphi)$, where $a\in \PcaCat(\pca{A},\subpca{A})(\support{X}, \support{Y})$ and $\varphi\function{\name_X}{Y}$ is a function such that, for every $p\name_X x$, $a\cdot p \name_Y \varphi(p,x)$.
    \end{itemize}
\end{definition}
\begin{remark}\thlabel{rem:morph_ex_induce_morph_relation}
    Observe that morphism  $(a,\varphi)\function{X}{Y}$ of $\extPcaCat(\pca{A},\subpca{A})$  induces a function $\name_X\to \name_Y$ assigning $(p,x)\mapsto (a\cdot p,\varphi (p,x))$. 
\end{remark}
In other words, the objects of $\extPcaCat(\pca{A},\subpca{A})$ are the same as of $\Assemblies(\pca{A},\subpca{A})$, the difference is in the notion of morphisms. 

It is straightforward to check that \thref{def:cat_ext_pca} provides a well-defined category, where the identity morphism $X\to X$ is given by the pair $(k,\pr_X)$, where $\pr_X\function{\relAsm{X}}{X}$ is the function mapping $(a,x)\mapsto x$, and the composition of $(a_1,\varphi_1)\function{X}{Y}$ and $(a_2,\varphi_2)\function{Y}{Z}$  is given by the morphism $(a_3,\varphi_3)\function{X}{Z}$ where $a_3$ is given by the composition of $a_1$ and $a_2$ in the category $\PcaCat(\pca{A},\pca{A}')$, while $\psi_3\function{\name_X}{Z}$ is given by the assignment $(p,x)\mapsto\varphi_2(a_1\cdot p,\varphi_1(p,x))$ (this is well-defined since, by definition of morphism, for every $p\name_X x$, $a_1\cdot p \name_Y \varphi_1(p,x)$).
\begin{remark}\thlabel{rem_exAsm_equiv_parAsm}
As already mentioned, the category $\extPcaCat(\pca{A},\subpca{A})$ is equivalent to the category $\partAsm(\pca{A},\subpca{A})$ of partitioned assemblies. Indeed, one can define a fully, faithful and essentially surjective functor $\extPcaCat(\pca{A},\subpca{A})\to \partAsm(\pca{A},\subpca{A})$ as follows: we can map an assembly $(X,\name_X)$ to the assembly $(\bar X, \name_{\bar X})$ where $\bar X:=\name_X$ and $\name_{\bar X}:=\{(p, (p,x)) \st p \name_X x\}$. It is immediate from the definition that $(\bar X, \name_{\bar X})$ is a partitioned assembly. Moreover, any morphism $(a,\varphi)$ of $\extPcaCat(\pca{A},\subpca{A})$ between the assemblies $(X,\name_X)$ and $(Y,\name_Y)$ is mapped to the morphism $f$ of (partitioned) assemblies between $(\bar X, \name_{\bar X})$ and $(\bar Y, \name_{\bar Y})$ defined as $f(p,x):=(a\cdot p, \varphi(p,x))$. It is direct to check that these assignments define an equivalence between the two categories. 
\end{remark}

The intuition behind the definition of $\extPcaCat(\pca{A},\subpca{A})$ is that a computation with a multi-valued function $f\pmfunction{X}{Z}$ between assemblies requires a (non-computable) step, as $p\in \support{X}$ may correspond to more than one $x\in\dom(f)$. In other words, on top of the computable maps $\ovln{k}$ and $\ovln{h}$, the reduction $f\rweireducibile g$ involves a (possibly non-computable) choice function $\varphi$ mapping $(p,x)$ with $p\name_X x \in \dom(f)$ to some element in $\dom(g)$. 

It is sometimes easier to think of a map between assemblies as a function having a ``public'' input (the name $p$ of some $x\in\dom(f)$) and a ``private'' input (the element $x$ of the assembly), where only the ``public'' input can be used in the computation (see also \cite[Observation 5.4 and the discussion thereafter]{Kihara2022rethinking}). 

Now, in order to show that the realizer-based Weihrauch reducibility can presented in the language of doctrines, we first rephrase  \thref{def:rwei_relation} in terms of reducibility of functions $F\function{\name_X}{\powerset^*(\pca{A})}$ and $G\function{\name_Z}{\powerset^*(\pca{A})}$, for some assemblies $X$ and $Y$.
    
\begin{proposition}
    \thlabel{thm:rW_and_extPCA}
    For every partial multi-valued function $f\pmfunction{X}{Y}$ between assemblies, let us define the map $F\function{\name_X}{\powerset^*(\pca{A})}$ as follows: for every $p\name_X x$, 
    \[ F(p,x) := \bigcup_{y\in f(x)} \{ q \in \pca{A} \st q \name_Y y \}. \] 
    Then $f\rweiequiv F$. Moreover, let $g\pmfunction{Z}{W}$ be a partial multi-valued function  between assemblies and let $G\function{\name_Z}{\powerset^*(\pca{A})}$ be its corresponding map. Then $f\rweireducibile g$ iff there are a morphism $(\ovln{k},\varphi)\in \extPcaCat(\pca{A},\pca{A}')(X,Z)$ and $\ovln{h}\in\subpca{A}$ such that 
    \[ (\forall p\name_X x)(\forall q \in G(\ovln{k}\cdot p, \varphi(p,x)))(\ovln{h}\cdot \pairing{p,q}\in F(p,x)).\tag{$\star$} \]
\end{proposition}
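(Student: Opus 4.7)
The plan is to prove the two claims in sequence.

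For the first claim $f\rweiequiv F$, I interpret $F$ as a partial multi-valued function between assemblies by endowing its domain $\name_X$ with the partitioned assembly structure in which each pair $(p,x)$ is named by its first coordinate $p$, and its codomain $\pca{A}$ with the trivial assembly structure (each element names itself). A realizer of $F$ is then a partial map $\pca{A}\to\pca{A}$ that, on input $p$ with $p\name_X x$, returns some element of $F(p,x)$. The reduction $f\rweireducibile F$ is witnessed by the identity forward functional together with the second-projection backward functional $\sprAsm$: any realizer of $F$ applied to $p$ yields $q\in F(p,x)$, which by construction is a $\name_Y$-name of some $y\in f(x)$, and $\sprAsm\cdot\pairing{p,q}=q$ recovers this name. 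The reverse reduction $F\rweireducibile f$ is symmetric, since any realizer of $f$ directly produces a $\name_Y$-name of some $y\in f(x)$, and such names are precisely the elements of $F(p,x)$.

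For the iff of the second claim, the forward direction ($\Rightarrow$) proceeds by unpacking the definition of $\rweireducibile$. Assume $\ovln{h},\ovln{k}\in\subpca{A}$ witness $f\rweireducibile g$. By the bulleted characterization recalled right after \thref{def:rwei_relation}, for every $p\name_X x$ with $x\in\dom(f)$, one has $\ovln{k}\cdot p\downarrow$ and there exists $z\in\dom(g)$ with $\ovln{k}\cdot p\name_Z z$. Invoking choice, I set $\varphi(p,x):=$ some such $z$, extending $\varphi$ arbitrarily outside $\dom(f)$-names (and replacing $\ovln{k}$ with a computable total extension on $\support{X}$ if necessary) so that $\ovln{k}\cdot p\name_Z\varphi(p,x)$ holds on all of $\name_X$. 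By construction $(\ovln{k},\varphi)$ is a morphism of $\extPcaCat(\pca{A},\pca{A}')$ from $X$ to $Z$, and condition $(\star)$ is a direct translation of the hypothesis $f\rweireducibile g$ through the definitions of $F$ and $G$.

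The reverse direction ($\Leftarrow$) is a verification. Assume $(\ovln{k},\varphi)$ and $\ovln{h}$ satisfy $(\star)$. Fix $p\name_X x$ with $x\in\dom(f)$. By the morphism condition $\ovln{k}\cdot p\name_Z\varphi(p,x)$, and the nonemptiness of $F(p,x)$ (which is empty iff $x\notin\dom(f)$) combined with $(\star)$ forces $\varphi(p,x)\in\dom(g)$. For any realizer $G\vdash g$, $G(\ovln{k}\cdot p)$ is a $\name_W$-name for some $w\in g(\varphi(p,x))$, hence lies in $G(\ovln{k}\cdot p,\varphi(p,x))$, and $(\star)$ yields $\ovln{h}\cdot\pairing{p,G(\ovln{k}\cdot p)}\in F(p,x)$, i.e.\ a $\name_Y$-name for some $y\in f(x)$. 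Thus $\ovln{k}$ and $\ovln{h}$ witness $f\rweireducibile g$. The main obstacle is essentially bookkeeping: reconciling the partiality of $f$ and $g$ with the totality requirement for morphisms of $\extPcaCat$, and ensuring that $\varphi$ sends $\dom(f)$-names into $\dom(g)$. Both can be handled by passing to Weihrauch-equivalent representatives with total forward functionals, or by careful choice in defining $\varphi$; no substantially new idea beyond standard realizability manipulations is required.
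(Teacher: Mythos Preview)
Your proposal is correct and follows essentially the same approach as the paper's proof: the paper dismisses $f\rweiequiv F$ as ``straightforward from the definition,'' handles the forward direction of the iff by invoking choice to define $\varphi(p,x)$ as some $z\in\dom(g)$ named by $\ovln{k}\cdot p$ (exactly as you do), and calls the reverse direction ``immediate.'' Your treatment is more explicit throughout, particularly in specifying the assembly structure on $\name_X$ for the first claim.

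One small remark: your worries about partiality are largely moot under the statement's hypotheses, since $F$ and $G$ are required to land in $\powerset^*(\pca{A})$, which forces $f$ to be total on $\barAsm{X}$ and $g$ total on $\barAsm{Z}$. In particular, in your reverse direction the claim that ``nonemptiness of $F(p,x)$ combined with $(\star)$ forces $\varphi(p,x)\in\dom(g)$'' does not actually follow from $(\star)$ (which would be vacuous if $G(\ovln{k}\cdot p,\varphi(p,x))=\emptyset$); rather, $\varphi(p,x)\in\dom(g)$ holds simply because $g$ is total. This does not affect the correctness of your argument.
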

\begin{proof}
    The equivalence $f \rweiequiv F$ is straightforward from the definition. Let us show the equivalence between $f\rweireducibile g$ and $(\star)$. 

    For the right-to-left direction, it is immediate to see that $\ovln{k}$ and $\ovln{h}$ witness $f\rweireducibile g$. For the converse direction, assume $f\rweireducibile g$ via $\ovln{k},\ovln{h}$. For every $x\in \dom(f)$ and $p\name_X x$, let us define $\varphi(p,x):=z$, where $z\in \dom(g)$ is an element of $Z$ named by $\ovln{k}\cdot p$ witnessing the reduction $f\rweireducibile g$. In particular, the pair $(\ovln{k},\varphi)$ is a morphism of $\extPcaCat$ between $X$ and $Z$. To show that $(\ovln{k},\varphi)$ and $\ovln{h}$ prove the claim, fix $p\name_X x$. By definition, $\ovln{k}\cdot p \name_Z \varphi(p, x)$, and hence $G(\ovln{k}\cdot p, \varphi(p, x))=\bigcup_{y\in f(x)} \{ q \in \pca{A} \st q \name_Y y \}$. In particular, any $q\in G(\ovln{k}\cdot p, \varphi(p, x))$ is a name for some $t\in g(\varphi(p, x))$. This implies that $\ovln{h}\cdot \pairing{p,q} \name_Y y \in f( x)$, i.e.\ $\ovln{h}\cdot \pairing{p,q}\in F(p, x)$.
\end{proof}

The previous proposition establishes an equivalent formulation of realizer-based Weih\-rauch reducibility, which is better suited to presentation in the language of doctrines.  

\begin{definition}[elementary realizer-based Weihrauch doctrine]
    We define the \textbf{elementary realizer-based Weihrauch doctrine} $\doctrine{\extPcaCat(\pca{A},\subpca{A})}{\RWeiElementary}$ as follows: for every object $X$ of $\extPcaCat(\pca{A},\subpca{A})$, the objects $\RWeiElementary(X)$ are functions $f\function{\name_X}{\powerset^*(\pca{A})}$. For every pair of maps $f,g$ in $\powerset^*(\pca{A})^{\name_X}$, we define $f \RWeiOrderElementary g$ iff there is $\ovln{h}\in \subpca{A}$ such that 
    \[ (\forall p\name_X x)(\forall q \in g(p,x))(\ovln{h}\cdot \pairing{p,q}\in f(p,x)).\]
    As usual, the doctrine $\RWeiElementary(X)$ is the quotient of $\powerset^*(\pca{A})^{\name_X}$ by $\RWeiOrderElementary$, and the action of $\RWeiElementary$ on a morphism $(a,\varphi)\function{X}{Y}$ of $\extPcaCat(\pca{A},\subpca{A})$ is defined by the pre-composition with the function $\name_X\to\name_Y$ induced by $(a,\varphi)$ (see \thref{rem:morph_ex_induce_morph_relation}).
\end{definition}
Following the same idea of \thref{prop:structures of elementary weihrauch doctrine}, we can check that the elementary realizer-based Weihrauch doctrine is pure universal:
\begin{proposition}
    The elementary realizer-based Weihrauch doctrine $\RWeiElementary \colon \mathsf{extAsm}(\pca{A},$\linebreak[4]$\subpca{A})^{\op} \xymatrix@1{\ar[r] & \pos }$ is a pure universal doctrine: for every projection $\pi_Z\function{X\times Z}{Z}$, the morphism $\forall_{\pi_Z}\function{\RWeiElementary(X\times Z)}{\RWeiElementary(Z)}$ sending an element $f\in \RWeiElementary(X\times Z)$ to the element $\forall_{\pi_Z} (f)\in \RWeiElementary(Z)$ defined as
    \[\forall_{\pi_Z}  (f)(s,z):= \bigcup_{x\in X} \left\{ \pairing{p,q} \st p \name_X x \text{ and } q \in f((p,x),(s,z))  \right\}\]
    is right-adjoint to $\RWeiElementary_{\pi_Z}(g)$, i.e.\ for every $f\in \RWeiElementary(X\times Z)$ and $g \in \RWeiElementary(Z)$,
    \[g \RWeiOrderElementary \forall_{\pi_Z}(f)\iff \RWeiElementary_{\pi_Z} (g)\RWeiOrderElementary f.\]  
\end{proposition}
 
\begin{proof}
    Assume first that $g \RWeiOrderElementary \forall_{\pi_Z}(f)$ via $\ovln{a}$. Let $\ovln{b}\in\subpca{A}$ be defined as $\ovln{b}\cdot \pairing{\pairing{p,s},q} := \ovln{a}\cdot\pairing{s,\pairing{p,q}}$. We want to show that for every $\pairing{p,s}\name_{X\times Z} (x,z)$, 
    \[ (\forall q \in f((p,x),(s,z)))(\ovln{b}\cdot \pairing{\pairing{p,s},q} \in g\circ \pi_Z((p,x),(s,z)) = g(s,z)).\] 
    To this end, fix $\pairing{p,s}\name_{X\times Z} (x,z)$. Since $f$ is total on $X\times Z$, we only need to show that for every $q\in f((p,x),(s,z))$, $\ovln{b}\cdot \pairing{\pairing{p,s},q} \in g(s,z)$. This follows immediately from the definition of $\ovln{b}$, as $g \WeiOrderElementary \forall_{\pi_Z}(f)$ and $\pairing{p,q}\in \forall_{\pi_Z}(f)(s,z)$.

    For the right-to-left direction, we proceed analogously: assume $\RWeiElementary_{\pi_Z} (g)\RWeiOrderElementary f$ via $\ovln{c}$. Define $\ovln{d}\cdot \pairing{s,\pairing{p,q}}:=\ovln{c}\cdot\pairing{\pairing{p,s},q}$. To show that $g \RWeiOrderElementary \forall_{\pi_Z}(f)$, simply notice that for every $s\name_Z z$, and every $\pairing{p,q}\in \forall_{\pi_Z}(f)(s,z)$, the fact that $\RWeiElementary_{\pi_Z} (g)\RWeiOrderElementary f$ via $\ovln{c}$ implies that $\ovln{d}\cdot \pairing{s,\pairing{p,q}}=\ovln{c}\cdot\pairing{\pairing{p,s},q} \in \RWeiElementary_{\pi_Z} (g)((p,x),(s,z))= g(s,z)$.
\end{proof}

Now we introduce the realizer-based Weihrauch doctrine. For this, we introduce first the notion of \emph{generalized assembly-based Weihrauch predicate}.

\begin{definition}[generalized assembly-based Weihrauch predicate]
    A \textbf{generalized as\-sem\-bly-based Weihrauch predicate} on an assembly $X$ is a function
    \[F\function{\name_X}{\powerset^*(\pca{A})^{\name_Y}}\]
    for some assembly $Y$.
\end{definition}

\begin{definition}[Realizer-based Weihrauch doctrine]
    \thlabel{def:rweidoctrine}
    Given a PCA $\pca{A}$ with elementary sub-PCA $\subpca{A}$, the \textbf{realizer- based Weihrauch doctrine} is the functor $\RWeiElementary \colon \mathsf{extAsm}(\pca{A},$\linebreak[4]$\subpca{A})^{\op} \xymatrix@1{\ar[r] & \pos }$ that maps $(X,\name_X)$ to the preorder $\RWeiDoctrine(X)$ defined as follows:
    \begin{itemize}
        \item objects are generalized assembly-based Weihrauch predicates on $X$;
        \item the partial order is given by the poset reflection of the preorder defined as follows: let $(Y,\name_Y)$ and $(Z,\name_Z)$ be objects of $\extPcaCat(\pca{A},\subpca{A})$. For every $F\function{\name_X}{\powerset^*(\pca{A})^{\name_Y}}$ and $G\function{\name_X }{\powerset^*(\pca{A})^{\name_Z}}$, we say that $F\RWeiDoctrineOrder G$ if there exist a morphism $(\ovln{k},\varphi)\in \Hom_\extPcaCat( X\times Y, Z) $ and $\ovln{h}\in\subpca{A}$ such that 
        \[ (\forall \pairing{p,q}\name_{X\times Y} (x,y))( \forall t\!\in\!G((p,x), (\ovln{k}\cdot \pairing{p,q}, \varphi(\pairing{p,q},(x,y))) ))(\ovln{h}\cdot \pairing{\pairing{p,q},t}\!\in\! F((p,x),(q,y))).\]
    \end{itemize}
     As before, the action of $\RWeiDoctrine$ on a morphism $(a,\varphi)\function{X}{Y}$ of $\extPcaCat(\pca{A},\subpca{A})$ is defined by the pre-composition with the function $\name_X\to\name_Y$ induced by $(a,\varphi)$ (see \thref{rem:morph_ex_induce_morph_relation}).
\end{definition}
We now show that the previous doctrine provides the right categorification of realizer-based Weihrauch degrees.
\begin{theorem}[Realizer-based Weihrauch degrees]
    The realizer-based Weihrauch degrees are isomorphic to $\RWeiDoctrine(\mathbf{1})$, where $\mathbf{1}$ is the terminal object $(1,\name_1)$. 
\end{theorem}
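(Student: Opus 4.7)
The plan is to directly translate between partial multi-valued functions between assemblies and objects of $\RWeiDoctrine(\mathbf{1})$, using \thref{thm:rW_and_extPCA} as the main bridge. First I unpack $\RWeiDoctrine(\mathbf{1})$: since $\mathbf{1}$ is terminal in $\extPcaCat$, $\name_\mathbf{1}$ is essentially a point, so an object of $\RWeiDoctrine(\mathbf{1})$ reduces to a function $G_*\function{\name_U}{\powerset^*(\pca{A})}$ for some assembly $U$ (its ``input assembly''); and, via the canonical iso $\mathbf{1}\times U\cong U$ from \thref{thm:extpca_products}, the order $F\RWeiDoctrineOrder G$ between objects with respective input assemblies $U$ and $V$ unpacks to the existence of $(\ovln{k},\varphi)\in\Hom_\extPcaCat(U,V)$ and $\ovln{h}\in\subpca{A}$ such that for every $p\name_U u$ and every $t\in G_*(\ovln{k}\cdot p,\varphi(p,u))$ one has $\ovln{h}\cdot\pairing{p,t}\in F_*(p,u)$.

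Next I define $\Theta$ on partial multi-valued functions between assemblies: $\Theta(f)$, for $f\pmfunction{X}{W}$, is the object of $\RWeiDoctrine(\mathbf{1})$ with input assembly $X$ and value the canonical predicate of \thref{thm:rW_and_extPCA}, i.e.\ $F(p,x):=\bigcup_{w\in f(x)}\{q\in\pca{A}\st q\name_W w\}$. The unpacked order above is now literally the characterization of realizer-based Weihrauch reducibility in \thref{thm:rW_and_extPCA}; hence $f\rweireducibile g\iff\Theta(f)\RWeiDoctrineOrder \Theta(g)$, so $\Theta$ descends to an order-embedding on $\rweiequiv$-classes.

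The main obstacle is surjectivity up to $\equiv_\RWeiDoctrine$: a generic $G_*$ may take different values at different names of the same $u\in U$, whereas predicates of the form $\Theta(f)_*$ depend only on $u$. I resolve this by passing to a partitioned copy of the input assembly. Given $G$ with value $G_*\function{\name_U}{\powerset^*(\pca{A})}$, set $\tilde U:=(\name_U,\{(p,(p,u))\st (p,u)\in\name_U\})$ and $W_0:=(\pca{A},\{(a,a)\st a\in\pca{A}\})$, and define $\tilde f\pmfunction{\tilde U}{W_0}$ by $\tilde f((p,u)):=G_*(p,u)\subseteq\pca{A}$; a direct computation gives $\Theta(\tilde f)_*(p,(p,u))=G_*(p,u)$. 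Finally, the two $\extPcaCat$-morphisms $(\id,(p,u)\mapsto(p,u))\function{U}{\tilde U}$ and $(\id,(p,(p,u))\mapsto u)\function{\tilde U}{U}$, both paired with $\ovln{h}:=\sprAsm$ as backward functional, witness $\Theta(\tilde f)\equiv_\RWeiDoctrine G$, giving surjectivity and hence the required poset isomorphism.
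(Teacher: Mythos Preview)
Your proof is correct and follows the same line as the paper: you unpack $\RWeiDoctrine(\mathbf{1})$ via the terminal-object identifications, and then match the resulting order against the characterization of $\rweireducibile$ given in \thref{thm:rW_and_extPCA}. The paper's own proof is much terser: it simply records that objects of $\RWeiDoctrine(\mathbf{1})$ are functions $\name_Y\to\powerset^*(\pca{A})$, writes out the unpacked order, and says this ``corresponds to'' \thref{def:rwei_relation}. In particular, the paper does not spell out the surjectivity direction you worried about (that an arbitrary $G_*\function{\name_U}{\powerset^*(\pca{A})}$ need not arise as the canonical predicate of some $f$ since it may depend on the name $p$ and not only on $u$); this is left implicit in the appeal to \thref{thm:rW_and_extPCA}. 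Your partitioned-assembly argument (pass to $\tilde U$ with underlying set $\name_U$ and singleton name-sets, then view $G_*$ as a multi-valued function into $(\pca{A},=)$) is exactly the natural way to make this step explicit, and the two mutually inverse $\extPcaCat$-morphisms you wrote down, together with $\ovln{h}=\sprAsm$, do witness the required equivalence. One small point: when you set $\Theta(f)$ to have input assembly $X$, you should restrict to $\dom(f)$ (else $F(p,x)=\emptyset$ for $x\notin\dom(f)$, which is not in $\powerset^*(\pca{A})$); the paper itself is slightly loose on this point in \thref{thm:rW_and_extPCA}.
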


\begin{proof}
    By definition of realizer-based Weihrauch doctrine, the objects of the poset $\RWeiDoctrine(\mathbf{1})$ can be identified with functions $\name_Y \to \powerset^*(\pca{A})$. Moreover, if $F\function{\name_Y}{\powerset^*(\pca{A})}$ and $G\function{\name_Z}{\powerset^*(\pca{A})}$ are in $\RWeiDoctrine(\mathbf{1})$ then $F\RWeiDoctrineOrder G$ iff there are a morphism $(\ovln{k},\varphi)$ between $Y$ and $Z$ and $\ovln{h}\in\subpca{A}$ such that
     \[ (\forall p \name_Y y)(\forall q\in G(\ovln{k}\cdot p, \varphi(p,y)))(\ovln{h}\cdot \pairing{p,q}\in F(p,y)), \]
    which corresponds to \thref{def:rwei_relation}.
\end{proof}

As before, when working with Kleene's second model, $\RWeiDoctrine(\mathbf{1})$ corresponds precisely to the realizer-based Weihrauch degrees.

We conclude our analysis of realizer-based Weihrauch reducibility by showing that the realizer-based Weihrauch doctrine is a pure existential completion.
\begin{theorem}
    The realizer-based Weihrauch doctrine is isomorphic to the pure existential completion of the elementary realizer-based Weihrauch doctrine. In symbols:
    \[\RWeiDoctrine\equiv \compex{\RWeiElementary}.\]
    In particular, for every PCA $\pca{A}$ with elementary sub-PCA $\subpca{A}$ and for every object $(X,\name_X)$ of $\extPcaCat(\pca{A},\subpca{A})$, the map 
    \[(\pi_X,f)\mapsto F \]
    where $F\function{\name_X }{\powerset^*(\pca{A})^{\name_Y}}$ is defined as $F(p,x):=f((p,x),\cdot)$, is an isomorphism of posets between $(\compex{\RWeiElementary}(X),\leq_{\exists})$ and $(\RWeiDoctrine(X),\RWeiDoctrineOrder)$.
\end{theorem}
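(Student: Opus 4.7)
The plan is to mirror the argument from Theorem \ref{thm:Weihrauch isomorphism}, adapted to the base category $\extPcaCat(\pca{A},\subpca{A})$, whose morphisms carry not only a computable component $\ovln{k}$ but also the non-computable ``choice'' component $\varphi$. By the description of the pure existential completion recalled in Section \ref{sec:docs}, an element of $\compex{\RWeiElementary}(X)$ is a pair $(\pi_X, f)$ where $\pi_X\function{X\times Y}{X}$ is a product projection in $\extPcaCat(\pca{A},\subpca{A})$ and $f\in \RWeiElementary(X\times Y)$, i.e.\ a map $f\function{\name_{X\times Y}}{\powerset^*(\pca{A})}$. Given another such pair $(\pi_X, g)$ with $g\function{\name_{X\times Z}}{\powerset^*(\pca{A})}$, the order $(\pi_X, f)\leq_{\exists} (\pi_X, g)$ holds iff there exists a morphism $(\ovln{k},\varphi)\function{X\times Y}{Z}$ of $\extPcaCat(\pca{A},\subpca{A})$ such that $f \RWeiOrderElementary \RWeiElementary_{\angbr{\pi_X}{(\ovln{k},\varphi)}}(g)$, where $\angbr{\pi_X}{(\ovln{k},\varphi)}\function{X\times Y}{X\times Z}$ is the morphism over $X$ supplied by the universal property of products in Lemma \ref{thm:extpca_products}.

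Next, I would unfold $\RWeiElementary_{\angbr{\pi_X}{(\ovln{k},\varphi)}}(g)$ using the explicit description of the product in $\extPcaCat(\pca{A},\subpca{A})$ together with the action of morphisms on names from Remark \ref{rem:morph_ex_induce_morph_relation}. A direct computation shows that the pair $\angbr{\pi_X}{(\ovln{k},\varphi)}$ sends the name $(\pairing{p,q},(x,y))\in\name_{X\times Y}$ to $(\pairing{p,\ovln{k}\cdot\pairing{p,q}},(x,\varphi(\pairing{p,q},(x,y))))\in\name_{X\times Z}$. Plugging this into the definition of $\RWeiOrderElementary$ and setting $F(p,x)(q,y):=f(\pairing{p,q},(x,y))$ and analogously $G$, the condition $(\pi_X, f)\leq_{\exists} (\pi_X, g)$ translates precisely into the existence of $(\ovln{k},\varphi)$ and $\ovln{h}\in\subpca{A}$ such that
\[ (\forall \pairing{p,q}\name_{X\times Y}(x,y))(\forall t\in G((p,x),(\ovln{k}\cdot\pairing{p,q},\varphi(\pairing{p,q},(x,y)))))(\ovln{h}\cdot\pairing{\pairing{p,q},t}\in F((p,x),(q,y))), \]
which is exactly Definition \ref{def:rweidoctrine} of $F\RWeiDoctrineOrder G$. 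Hence the assignment $(\pi_X,f)\mapsto F$ preserves and reflects the order.

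Finally, I would verify surjectivity: every generalized assembly-based Weihrauch predicate $H\function{\name_X}{\powerset^*(\pca{A})^{\name_Y}}$ is the image of the pair $(\pi_X,h)$, where $h\function{\name_{X\times Y}}{\powerset^*(\pca{A})}$ is defined by $h(\pairing{p,q},(x,y)):=H(p,x)(q,y)$. Combined with the order-preserving and order-reflecting property established above, this yields the claimed isomorphism of posets, and the naturality in $X$ is immediate from the fact that both sides act on morphisms by pre-composition with the map on names induced by a morphism of $\extPcaCat(\pca{A},\subpca{A})$.

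The main obstacle I expect is the careful bookkeeping around the morphisms of $\extPcaCat(\pca{A},\subpca{A})$: one must verify that the pair $\angbr{\pi_X}{(\ovln{k},\varphi)}$ really is a morphism over $X$ in the sense required by the pure existential completion, check that its induced action on $\name_{X\times Y}$ is the one computed above, and match the resulting quantifier pattern on the nose with that of Definition \ref{def:rweidoctrine}. Once this is settled, the proof is essentially a transcription of the argument from Theorem \ref{thm:Weihrauch isomorphism}, with the extra choice component $\varphi$ faithfully carried along throughout.
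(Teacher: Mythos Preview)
Your proposal is correct and follows essentially the same approach as the paper: both unfold the pure existential completion order via the morphism $\angbr{\pi_X}{(\ovln{k},\varphi)}$ of $\extPcaCat(\pca{A},\subpca{A})$, match the resulting condition verbatim with \thref{def:rweidoctrine}, and then conclude by surjectivity. If anything, you are slightly more explicit than the paper about the action of $\angbr{\pi_X}{(\ovln{k},\varphi)}$ on names and about naturality in $X$, but these are exactly the bookkeeping points the paper leaves implicit.
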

\begin{proof}
    Recall that 
    \[ (\pi  ,f) \leexcomp (\pi',g) \iff (\exists (\ovln{k},\varphi)\in \extPcaCat(\pca{A},\subpca{A})(X\times Y,Z))(f \RWeiOrderElementary \RWeiElementary_{\angbr{\pi_X}{k}}(g)), \]
    where $\RWeiElementary_{\angbr{\pi_X}{k}}(g) \colon= ((p,x),(q,y))\mapsto g((p,x),(\ovln{k}\cdot\pairing{p,q},\varphi((p,x),(q,y)) )$.
    
    We first show that the assignment $(\pi_X,f)\mapsto F$ preserves and reverses the order. Let $(\pi ,f)$ and $(\pi',g)$ be two elements of $\compex{\RWeiElementary}(X)$. By expanding the definition of $\RWeiOrderElementary$, we obtain that $(\pi  ,f) \leexcomp (\pi',g) $ iff there is a morphism $(\ovln{k},\varphi)\in \extPcaCat(\pca{A},\subpca{A})(X\times Y,Z)$ and $\ovln{h}\in\subpca{A}$ such that 

    \[ (\forall \pairing{p,q}\name_{X\times Y} (x,y))(\forall t \in g((p,x),(\ovln{k}\cdot\pairing{p,q},\varphi(\pairing{p,q},(x,y))))) (\ovln{h}\cdot \pairing{\pairing{p,q},t} \in f((p,x),(q,y)))\tag{$\star$}\] 
    Let $F\function{\name_X }{\powerset^*(\pca{A})^{\name_Y}}$ and $G\function{\name_X }{\powerset^*(\pca{A})^{\name_Z}}$ be the images of $(\pi,f)$ and $(\pi',g)$ respectively. Observe that, writing $F((p,x),(q,y))=f((p,x),(q,y))$ (analogously for $G$) and substituting $F$ and $G$ in ($\star$), it is straightforward to check that $(\pi, f) \leexcomp (\pi',g)$ iff $F\RWeiDoctrineOrder G$ (see the definition).
    
    This shows that the embedding preserves and reflects the partial order. Finally, it is easy to see that the map $(\pi_X,f) \mapsto F$ is surjective, since any function $H\function{\name_X}{\powerset^* (\pca{A})^{\name_V}}$ can be obtained via our embedding as the image of the pair $(\pr_X,h)$, where $h\function{\name_X\times \name_V}{\powerset^* (\pca{A})}$ is defined as $h ((p,x),(q,v)):=H((p,x),(q,v))$. Therefore, since the map is a surjective homomorphism of posets which also reflects the order, we can conclude that it is an isomorphism.
\end{proof} 
\begin{remark}\thlabel{rem:real_bs_W_on_modest}
    As anticipated \thref{rem:wei_on_modest_is_part_case}, if consider the realizer-based Weihrauch doctrine restricted to the full subcategory of $\extPcaCat(\pca{A},\subpca{A})$ whose objects are modest sets, we obtain precisely a doctrine abstracting the notion of Weihrauch reducibility for represented spaces. 
\end{remark}

\subsection{Extended Weihrauch reducibility}
Recently Bauer \cite{Bauer2021} introduced another generalization of Weihrauch reducibility, called \emph{extended Weihrauch reducibility}, that can be seen as another way to generalize Weihrauch reducibility to multi-represented spaces. As anticipated, we show that the category $\extPcaCat(\pca{A},\subpca{A})$ can be used to describe the extended Weihrauch degrees in terms of doctrines. 

We start by recalling the main definitions from \cite{Bauer2021}:

\begin{definition}[Extended Weihrauch reducibility {\cite[Def.\ 3.7]{Bauer2021}}]
    \thlabel{def:extwei_relation}
    Let $\pca{A}$ be a  PCA. An \textbf{extended Weihrauch predicates} is a function $f\function{\pca{A}}{\powerset\powerset(\pca{A})}$.

    Given two extended Weihrauch predicates $f,g$, we say that$f$ is \textbf{extended-Weihrauch reducible} to $g$, and write $f \extweireducible g$ if there are $\ovln{k}, \ovln{h} \in \subpca{A}$ such that 
\begin{itemize}
    \item for every $p\in \pca{A}$ such that $f(p)\neq \emptyset$, $\ovln{k}\cdot p\downarrow$ and $g(\ovln{k}\cdot p)\neq \emptyset$;
    \item for every $A \in f(p)$ there is $B \in g(\ovln{k}\cdot p)$ such that for every $q \in B$, $\ovln{h}\cdot \pairing{p,q} \downarrow$ and $\ovln{h}\cdot \pairing{p,q} \in A$.
\end{itemize}
\end{definition}
Observe that, while extended Weihrauch reducibility applies to extended Weihrauch predicates, we can rewrite the above definition as a preorder on partial multi-valued functions on elements of $\extPcaCat(\pca{A},\subpca{A})$. This follows the same ideas used in the proof of \cite[Prop.\ 3.8]{Bauer2021}: given $f\function{\pca{A}}{\powerset\powerset(\pca{A})}$, let $X_f=(\barAsm{X_f}, \name_{X_f})$ be the object of $\extPcaCat(\pca{A},\subpca{A})$ defined as follows:
\begin{gather*}
    \barAsm{X_f} := \bigcup_{p\in\dom(f)} f(p) = \{ A \in \powerset(\pca{A}) \st (\exists p\in \dom(f))( A \in f(p))\};\\
    p \name_{X_f} A :\iff A \in f(p).
\end{gather*}
We then define $F\function{\name_{X_f}}{\powerset(\pca{A})}$ as $F(p,A):=A$.

Observe that, unlike what happens for $\rweireducibile$, $F$ need not be a total map. In other words, it could be that $F(p,A)=\emptyset$ for some $p\name_{X_f} A$. This is a crucial difference between $\rweireducibile$ and $\extweireducible$.
\begin{proposition}\thlabel{prop:equiv_rew_of_ext_wei}
Given two extended predicates $f,g$, $f \extweireducible g$ iff there are a morphism $(\ovln{k},\varphi)\in \extPcaCat(\pca{A},\subpca{A})(X_f, X_g)$ and $\ovln{h}\in\subpca{A}$ such that 
\[ (\forall p\name_{X_f} A)(\forall q \in G(\ovln{k}\cdot p, \varphi(p,A)))(\ovln{h}\cdot \pairing{p,q}\in F(p,A)). \]
\end{proposition}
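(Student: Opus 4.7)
The plan is to prove the two directions by unpacking the relevant definitions and using the specific construction of $X_f$, $X_g$, $F$, $G$ described just before the proposition. The key observation is that, by construction, $p\name_{X_f} A$ iff $A\in f(p)$ (and similarly for $X_g$), while $F(p,A)=A$ and $G(p,B)=B$. So the displayed condition
\[(\forall p\name_{X_f} A)(\forall q \in G(\ovln{k}\cdot p, \varphi(p,A)))(\ovln{h}\cdot \pairing{p,q}\in F(p,A))\]
reads exactly as: for every $A\in f(p)$, letting $B:=\varphi(p,A)$, we have $B\in g(\ovln{k}\cdot p)$ and for every $q\in B$, $\ovln{h}\cdot \pairing{p,q}\in A$. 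Once this translation is in place, the equivalence with \thref{def:extwei_relation} is essentially a rephrasing.

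For the forward direction, I would assume $f \extweireducible g$ via $\ovln{k},\ovln{h}\in\subpca{A}$ and build the required morphism $(\ovln{k},\varphi)\in \extPcaCat(\pca{A},\subpca{A})(X_f,X_g)$. The forward functional is the same $\ovln{k}$, while $\varphi\colon \name_{X_f}\to \barAsm{X_g}$ is defined (using choice) by setting $\varphi(p,A):=B$ for some $B\in g(\ovln{k}\cdot p)$ witnessing the second clause of \thref{def:extwei_relation}, namely such that $\ovln{h}\cdot\pairing{p,q}\in A$ for every $q\in B$. To verify this yields a morphism of $\extPcaCat(\pca{A},\subpca{A})$, I would use the first clause of \thref{def:extwei_relation}: if $p\in\support{X_f}$, then $f(p)\neq \emptyset$, hence $\ovln{k}\cdot p\downarrow$ and $g(\ovln{k}\cdot p)\neq \emptyset$, so $\ovln{k}\cdot p\in\support{X_g}$; moreover, by the choice of $B$, one has $\ovln{k}\cdot p\name_{X_g} \varphi(p,A)$, which is precisely the naming condition of \thref{def:cat_ext_pca}. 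The displayed inclusion then follows directly from the second clause.

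For the backward direction, given $(\ovln{k},\varphi)$ and $\ovln{h}$ witnessing the displayed condition, I would check the two clauses of \thref{def:extwei_relation} in order. The first clause follows because, if $f(p)\neq\emptyset$, then $p\in\support{X_f}$; since $(\ovln{k},\varphi)$ is a morphism in $\extPcaCat(\pca{A},\subpca{A})$, the element $\ovln{k}$ is computable on supports, so $\ovln{k}\cdot p\downarrow$ and $\ovln{k}\cdot p\in\support{X_g}$, giving $g(\ovln{k}\cdot p)\neq\emptyset$. The second clause is immediate: given $A\in f(p)$, let $B:=\varphi(p,A)$; the morphism condition yields $B\in g(\ovln{k}\cdot p)$, and the displayed inclusion gives $\ovln{h}\cdot \pairing{p,q}\in A$ for every $q\in B$.

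I expect the main subtlety to lie in the forward direction, specifically in producing $\varphi$: we need a function defined on the whole of $\name_{X_f}$ (not merely on $\dom(f)$) satisfying the naming condition, and this requires a choice of witness $B$ for each pair $(p,A)$, together with a careful check that the resulting pair $(\ovln{k},\varphi)$ really lies in $\extPcaCat(\pca{A},\subpca{A})(X_f, X_g)$ in the sense of \thref{def:cat_ext_pca}. The rest of the argument is a direct dictionary between $\extweireducible$ and the categorical reformulation.
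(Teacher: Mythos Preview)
Your proposal is correct and follows essentially the same approach as the paper, which does not spell out a proof but defers to the analogous argument for \thref{thm:rW_and_extPCA}: there too the backward direction is immediate and the forward direction constructs $\varphi$ via a choice of witnesses, then verifies that $(\ovln{k},\varphi)$ is a morphism of $\extPcaCat(\pca{A},\subpca{A})$. Your explicit unpacking of $p\name_{X_f} A \iff A\in f(p)$ and $F(p,A)=A$, $G(p,B)=B$ is exactly the dictionary needed, and your identification of the choice step in building $\varphi$ as the only non-trivial point matches the paper's treatment.
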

This characterization is formally identical to $(\star)$ in \thref{thm:rW_and_extPCA}, and it can be proved using the same ideas. However, the fact that we do not require $F$ and $G$ to be total implies that $f\extweireducible g$ is vacuously true whenever $\emptyset\in g(q)$ for every $q\in\dom(g)$. In fact, the top extended Weihrauch degree has a representative $g$ with $\dom(g)=\{\ovln{a}\}$ for some $\ovln{a}\in\subpca{A}$ and $g(\ovln{a})=\{\emptyset\}$. In other words, realizer-based Weihrauch reducibility and extended Weihrauch reducibility agree if we restrict the latter to $\lnot\lnot$-\textbf{dense extended Weihrauch predicates}, namely extended Weihrauch predicates of the type $f\function{\pca{A}}{\powerset\powerset(\pca{A})}$ with $\emptyset \notin f(p)$ for every $p$. 

Analogously to what is obtained for the realizer-based Weihrauch reducibility, we can define an \textbf{extended Weihrauch doctrine} that is isomorphic to the pure existential completion of a pure universal doctrine and that, when evaluated on $\mathbf{1}$, is isomorphic to the extended Weihrauch degrees. The definition and results closely follow the ones obtained for the realizer-based Weihrauch reducibility.

\begin{definition}[elementary extended Weihrauch doctrine]
    We define the \textbf{elementary extended Weihrauch doctrine} $\doctrine{\extPcaCat(\pca{A},\subpca{A})}{\EWeiElementary}$ as follows: for every object $(X,\name_X)$ of $\extPcaCat(\pca{A},\subpca{A})$, the objects $\EWeiElementary(X)$ are functions $f\function{\name_X}{\powerset(\pca{A})}$. For every pair of maps $f,g$ in $\powerset(\pca{A})^{\name_X}$, we define $f \EWeiOrderElementary g$ iff there is $\ovln{h}\in \subpca{A}$ such that 
    \[ (\forall p\name_X x)(\forall q \in g(p,x))(\ovln{h}\cdot \pairing{p,q}\in f(p,x)).\]
    As usual, the doctrine $\EWeiElementary(X)$ is the quotient of $\powerset(\pca{A})^{\name_X}$ by $\EWeiOrderElementary$, and the action of $\EWeiElementary$ on the morphisms of $\extPcaCat(\pca{A},\subpca{A})$ is defined by the (suitable) pre-composition.
\end{definition}

\begin{proposition}
    The elementary extended Weihrauch doctrine $\EWeiElementary \colon \mathsf{extAsm}(\pca{A},$\linebreak[4]$\subpca{A})^{\op} \xymatrix@1{\ar[r] & \pos }$ is a pure universal doctrine: for every projection $\pi_Z\function{X\times Z}{Z}$, the morphism $\forall_{\pi_Z}\function{\EWeiElementary(X\times Z)}{\EWeiElementary(Z)}$ sending an element $f\in \EWeiElementary(X\times Z)$ to the element $\forall_{\pi_Z} (f)\in \EWeiElementary(Z)$ defined as
    \[\forall_{\pi_Z}  (f)(s,z):= \bigcup_{x\in X} \left\{ \pairing{p,q} \st p \name_X x \text{ and } q \in f((p,x),(s,z))  \right\}\]
    is right-adjoint to $\EWeiElementary_{\pi_Z}(g)$, i.e.\ for every $f\in \EWeiElementary(X\times Z)$ and $g \in \EWeiElementary(Z)$,
    \[g \EWeiOrderElementary \forall_{\pi_Z}(f)\iff \EWeiElementary_{\pi_Z} (g)\EWeiOrderElementary f.\]  
\end{proposition}

\begin{proof}
    Assume first that $g \EWeiOrderElementary \forall_{\pi_Z}(f)$ via $\ovln{a}$. Let $\ovln{b}\in\subpca{A}$ be defined as $\ovln{b}\cdot \pairing{\pairing{p,s},q} := \ovln{a}\cdot\pairing{s,\pairing{p,q}}$. We want to show that for every $\pairing{p,s}\name_{X\times Z} (x,z)$, 
    \[ (\forall q \in f((p,x),(s,z)))(\ovln{b}\cdot \pairing{\pairing{p,s},q} \in g\circ \pi_Z((p,x),(s,z)) = g(s,z)).\] 
    To this end, fix $\pairing{p,s}\name_{X\times Z} (x,z)$ and fix $q\in f((p,x),(s,z))$. Observe that $\pairing{p,q}\in \forall_{\pi_Z}(f)(s,z)$, therefore we immediately obtain
    \[ \ovln{b}\cdot \pairing{\pairing{p,s},q} = \ovln{a}\cdot\pairing{s,\pairing{p,q}} \in g(s,z). \]    

    For the right-to-left direction, we proceed analogously: assume $\EWeiElementary_{\pi_Z} (g)\EWeiOrderElementary f$ via $\ovln{c}$. Define $\ovln{d}\cdot \pairing{s,\pairing{p,q}}:=\ovln{c}\cdot\pairing{\pairing{p,s},q}$. To show that $g \EWeiOrderElementary \forall_{\pi_Z}(f)$, simply notice that for every $s\name_Z z$, and every $\pairing{p,q}\in \forall_{\pi_Z}(f)(s,z)$ (in particular, $q\in f((p,x),(s,z))$), the fact that $\EWeiElementary_{\pi_Z} (g)\EWeiOrderElementary f$ via $\ovln{c}$ implies that $\ovln{d}\cdot \pairing{s,\pairing{p,q}}=\ovln{c}\cdot\pairing{\pairing{p,s},q} \in \EWeiElementary_{\pi_Z} (g)((p,x),(s,z))= g(s,z)$.
\end{proof}

\begin{definition}[generalized extended Weihrauch predicate]
    A \textbf{generalized extended Weihrauch predicate} on an assembly $X$ is a function
    \[F\function{\name_X}{\powerset(\pca{A})^{\name_Y}}\]
    for some assembly $Y$.
\end{definition}

\begin{definition}[Extended Weihrauch doctrine]
    Given a PCA $\pca{A}$ with elementary sub-PCA $\subpca{A}$, the \textbf{extended Weihrauch doctrine} is the functor $\doctrine{\extPcaCat(\pca{A},\subpca{A})}{\EWeiDoctrine}$ that maps $(X,\name_X)$ to the preorder $\EWeiDoctrine(X)$ defined as follows:
    \begin{itemize}
        \item objects are generalized extended Weihrauch predicates;
        \item the partial order is given by the poset reflection of the preorder defined as follows: let $Y$ and $Z$ be objects of $\extPcaCat(\pca{A},\subpca{A})$. For every $F\function{\name_X}{\powerset(\pca{A})^{\name_Y}}$ and $G\function{\name_X }{\powerset(\pca{A})^{\name_Z}}$, we say that $F\EWeiDoctrineOrder G$ if there exist a morphism $(\ovln{k},\varphi)\in \extPcaCat(\pca{A},\subpca{A})( X\times Y, Z) $ and $\ovln{h}\in\subpca{A}$ such that 
        \[ (\forall \pairing{p,q}\name_{X\times Y} (x,y))( \forall t\!\in\!G((p,x), (\ovln{k}\cdot \pairing{p,q}, \varphi(\pairing{p,q},(x,y))) ))(\ovln{h}\cdot \pairing{\pairing{p,q},t}\!\in\!F((p,x),(q,y))).\]
    \end{itemize}
\end{definition}
\begin{theorem}[Extended Weihrauch degrees]
    The extended Weihrauch degrees are isomorphic to $\EWeiDoctrine(\mathbf{1})$.
\end{theorem}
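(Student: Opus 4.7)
The plan is to follow the same strategy used for the realizer-based Weihrauch case and rely crucially on \thref{prop:equiv_rew_of_ext_wei}, which already translates the classical definition of $\extweireducible$ into the shape that matches the order of $\EWeiDoctrine$. First I would unfold $\EWeiDoctrine(\mathbf{1})$: since $\name_{\mathbf{1}}$ is essentially a singleton, a generalized extended Weihrauch predicate on $\mathbf{1}$ amounts to a function $F\function{\name_Y}{\powerset(\pca{A})}$ for some assembly $Y$. Using the definition of $\EWeiDoctrineOrder$ restricted to this fibre, two such predicates $F\function{\name_Y}{\powerset(\pca{A})}$ and $G\function{\name_Z}{\powerset(\pca{A})}$ satisfy $F\EWeiDoctrineOrder G$ iff there exist a morphism $(\ovln{k},\varphi)\in \extPcaCat(\pca{A},\subpca{A})(Y,Z)$ and $\ovln{h}\in\subpca{A}$ such that
\[ (\forall p\name_Y y)(\forall t\in G(\ovln{k}\cdot p,\varphi(p,y)))(\ovln{h}\cdot \pairing{p,t}\in F(p,y)), \]
which is exactly the condition isolated in \thref{prop:equiv_rew_of_ext_wei}.

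Next I would define the assignment mapping an extended Weihrauch predicate $f\function{\pca{A}}{\powerset\powerset(\pca{A})}$ to the element of $\EWeiDoctrine(\mathbf{1})$ given by $F_f\function{\name_{X_f}}{\powerset(\pca{A})}$, where $X_f$ and $F_f$ are the assembly and the map constructed just before \thref{prop:equiv_rew_of_ext_wei}. Conversely, given $F\function{\name_Y}{\powerset(\pca{A})}$, I would recover an extended Weihrauch predicate by setting $f(p):=\{F(p,y)\st y\in \barAsm{Y}\text{ and }p\name_Y y\}$. A direct verification shows that these two assignments are mutually inverse up to the equivalences generated by $\extweireducible$ and $\EWeiDoctrineOrder$, and that they preserve and reflect the preorders, the latter being an immediate consequence of \thref{prop:equiv_rew_of_ext_wei}.

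The main subtlety I expect is the handling of empty values in extended predicates: since $F$ may have $F(p,y)=\emptyset$ and correspondingly $f(p)$ may contain $\emptyset$, the reduction $\EWeiDoctrineOrder$ (like $\extweireducible$) becomes vacuous on such components. One must therefore be careful on the ``degenerate'' part of the domain to exhibit a legitimate morphism $(\ovln{k},\varphi)$ of $\extPcaCat(\pca{A},\subpca{A})$, i.e.\ one for which $\ovln{k}\cdot p$ actually names $\varphi(p,y)$ in $Z$, even when the associated reduction condition is empty. This uses only the fact that every assembly has a total naming relation, so some choice of $\varphi(p,y)$ is always available. Once this routine bookkeeping is in place, the final step is to invoke \thref{prop:equiv_rew_of_ext_wei} to conclude the isomorphism of posets.
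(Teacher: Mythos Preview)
Your proposal is correct and follows essentially the same approach as the paper: unfold $\EWeiDoctrine(\mathbf{1})$ so that objects become maps $\name_Y\to\powerset(\pca{A})$ and the order becomes exactly the condition of \thref{prop:equiv_rew_of_ext_wei}, then invoke that proposition. The paper's proof is extremely terse and leaves the explicit bijection and the bookkeeping about empty values implicit, so your extra detail (the construction $f\mapsto F_f$, its inverse, and the remark on degenerate components) only elaborates what the paper takes for granted rather than constituting a genuinely different route.
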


\begin{proof}
    By definition of extended Weihrauch doctrine, the objects of the poset $\EWeiDoctrine(\mathbf{1})$ can be identified with functions $\name_Y \to \powerset(\pca{A})$. Moreover, if $F\function{\name_Y}{\powerset(\pca{A})}$ and $G\function{\name_Z}{\powerset(\pca{A})}$ are in $\EWeiDoctrine(\mathbf{1})$ then $F\EWeiDoctrineOrder G$ iff there are a morphism $(\ovln{k},\varphi)$ between $Y$ and $Z$ and $\ovln{h}\in\subpca{A}$ such that
     \[ (\forall q \name_Y y)(\forall t\in G(\ovln{k}\cdot q, \varphi(q,y)))(\ovln{h}\cdot \pairing{q,t}\in F(q,y)), \]
    which, by \thref{prop:equiv_rew_of_ext_wei}, corresponds to \thref{def:extwei_relation}.
\end{proof}

\begin{theorem}
    The extended Weihrauch doctrine is isomorphic to the pure existential completion of the elementary extended Weihrauch doctrine. In symbols:
    \[\EWeiDoctrine\equiv \compex{\EWeiElementary}.\]
    In particular, for every PCA $\pca{A}$ with elementary sub-PCA $\subpca{A}$ and for every object $X$ of $\extPcaCat(\pca{A},\subpca{A})$, the map 
    \[(\pi_X,f)\mapsto F \]
    where $F\function{\name_X }{\powerset(\pca{A})^{\name_Y}}$ is defined as $F(p,x):=f((p,x),\cdot)$, is an isomorphism of posets between $(\compex{\EWeiElementary}(X),\leq_{\exists})$ and $(\EWeiDoctrine(X),\EWeiDoctrineOrder)$.
\end{theorem}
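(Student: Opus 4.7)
The plan is to mimic, step-by-step, the proof of the analogous Realizer-based Weihrauch isomorphism theorem, since the only difference between $\RWeiDoctrine$ and $\EWeiDoctrine$ (respectively between $\RWeiElementary$ and $\EWeiElementary$) is that the fibres now consist of functions into $\powerset(\pca{A})$ rather than $\powerset^*(\pca{A})$, and this difference plays no role in the structural argument. First I would unfold the definition of $\leexcomp$ inside $\compex{\EWeiElementary}$: given representatives $(\pi_X,f)$ and $(\pi'_X,g)$ of $\compex{\EWeiElementary}(X)$ with $f\in\EWeiElementary(X\times Y)$ and $g\in\EWeiElementary(X\times Z)$, we have $(\pi,f)\leexcomp(\pi',g)$ iff there exist a morphism $(\ovln{k},\varphi)\in\extPcaCat(\pca{A},\subpca{A})(X\times Y,Z)$ and an element $\ovln{h}\in\subpca{A}$ such that $f\EWeiOrderElementary \EWeiElementary_{\angbr{\pi_X}{(\ovln{k},\varphi)}}(g)$. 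Spelling out the reindexing along the morphism $\angbr{\pi_X}{(\ovln{k},\varphi)}\function{X\times Y}{X\times Z}$, this is precisely the quantified statement that for every $\pairing{p,q}\name_{X\times Y}(x,y)$ and every $t\in g((p,x),(\ovln{k}\cdot\pairing{p,q},\varphi(\pairing{p,q},(x,y))))$, we have $\ovln{h}\cdot\pairing{\pairing{p,q},t}\in f((p,x),(q,y))$.

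Next I would compare this statement with the defining condition of $\EWeiDoctrineOrder$ applied to the images $F,G$ of $(\pi_X,f)$ and $(\pi'_X,g)$ under the candidate map. Since by construction $F((p,x),(q,y))=f((p,x),(q,y))$ and $G((p,x),(r,z))=g((p,x),(r,z))$, the two conditions are literally identical, so the map simultaneously preserves and reflects the order; in particular it is well-defined on equivalence classes and is an order embedding.

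For surjectivity, I would observe that an arbitrary generalized extended Weihrauch predicate $H\function{\name_X}{\powerset(\pca{A})^{\name_V}}$ is hit by the pair $(\pr_X,h)$ where $h\in\EWeiElementary(X\times V)$ is defined by $h((p,x),(q,v)):=H(p,x)(q,v)$, exactly as in the realizer-based case. Putting preservation, reflection, and surjectivity together yields the desired poset isomorphism, and hence the required equivalence of doctrines $\EWeiDoctrine\equiv\compex{\EWeiElementary}$.

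The proof is essentially a careful unfolding of definitions; there is no genuine obstacle once one has checked that the map $(\pi_X,f)\mapsto F$ actually lands inside $\EWeiDoctrine(X)$ and is natural in $X$ (which follows from \thref{rem:morph_ex_induce_morph_relation}, since both doctrines act on morphisms by precomposition with the induced map $\name_X\to\name_Y$). The only subtlety worth double-checking is that allowing $\emptyset\in F(p,x)$ for some $(p,x)$ does not break the argument: it does not, because the quantifier ``for all $t\in G(\ldots)$'' is vacuously satisfied at such points, and the same vacuous satisfaction occurs in the expanded form of $\leexcomp$.
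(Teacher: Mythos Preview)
Your proposal is correct and follows essentially the same approach as the paper: unfold $\leexcomp$ in $\compex{\EWeiElementary}(X)$ to obtain exactly the defining condition of $\EWeiDoctrineOrder$, conclude that the map preserves and reflects the order, and then verify surjectivity via the pair $(\pr_X,h)$ with $h((p,x),(q,v)):=H(p,x)(q,v)$. Your additional remarks on naturality and on the vacuous satisfaction when $\emptyset$ appears as a value are accurate and go slightly beyond what the paper spells out, but do not change the argument.
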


\begin{proof}
    Recall that 
    \[ (\pi  ,f) \leexcomp (\pi',g) \iff (\exists (\ovln{k},\varphi)\in \extPcaCat(\pca{A},\subpca{A})(X\times Y,Z))(f \EWeiOrderElementary \EWeiElementary_{\angbr{\pi_X}{k}}(g)), \]
    where $\EWeiElementary_{\angbr{\pi_X}{k}}(g) \colon= ((p,x),(q,y))\mapsto g((p,x),(\ovln{k}\cdot\pairing{p,q},\varphi((p,x),(q,y)) )$.
    
    We first show that the assignment $(\pi_X,f)\mapsto F$ preserves and reverses the order. Let $(\pi ,f)$ and $(\pi',g)$ be two elements of $\compex{\EWeiElementary}(X)$. By expanding the definition of $\EWeiOrderElementary$, we obtain that $(\pi  ,f) \leexcomp (\pi',g) $ iff there is a morphism $(\ovln{k},\varphi)\in \extPcaCat(\pca{A},\subpca{A})(X\times Y,Z)$ and $\ovln{h}\in\subpca{A}$ such that 
    \[ (\forall \pairing{p,q}\name_{X\times Y} (x,y))(\forall t \in g((p,x),(\ovln{k}\cdot\pairing{p,q},\varphi(\pairing{p,q},(x,y))))) (\ovln{h}\cdot \pairing{\pairing{p,q},t} \in f((p,x),(q,y)))\tag{$\star$}\] 
    Let $F\function{\name_X }{\powerset(\pca{A})^{\name_Y}}$ and $G\function{\name_X }{\powerset(\pca{A})^{\name_Z}}$ be the images of $(\pi,f)$ and $(\pi',g)$ respectively. Observe that, writing $F((p,x),(q,y))=f((p,x),(q,y))$ (analogously for $G$) and substituting $F$ and $G$ in ($\star$), it is straightforward to check that $(\pi, f) \leexcomp (\pi',g)$ iff $F\EWeiDoctrineOrder G$ (see the definition).
    
    This shows that the embedding preserves and reflects the partial order. Finally, it is easy to see that the map $(\pi_X,f) \mapsto F$ is surjective, since any function $H\function{\name_X}{\powerset (\pca{A})^{\name_V}}$ can be obtained via our embedding as the image of the pair $(\pr_X,h)$, where $h\function{\name_X\times \name_V}{\powerset(\pca{A})}$ is defined as $h ((p,x),(q,v)):=H((p,x),(q,v))$. Therefore, since the map is a surjective homomorphism of posets which also reflects the order, we can conclude that it is an isomorphism.
\end{proof} 
As observed in \cite{Bauer2021}, the (classical) Weihrauch degrees are isomorphic to the restriction of the extended Weihrauch degrees to the $\lnot\lnot$-\textbf{dense modest extended Weihrauch predicates}, namely $\lnot\lnot$-dense predicates $f\function{\pca{A}}{\powerset\powerset(\pca{A})}$ such that, for every $p\in\dom(f)$, $|f(p)|=1$. These correspond precisely to objects of the elementary extended Weihrauch doctrine $\EWeiElementary(X)$ such that $(X,\name_X)$ is a modest set. In other words, the classical Weihrauch reduction for partial multi-valued functions on represented spaces can be again, as observed in \thref{rem:real_bs_W_on_modest}, obtained by considering the restriction of extended Weihrauch doctrine on the full-subcategory of $\extPcaCat(\pca{A},\subpca{A})$ whose objects are modest sets. 
\subsection{Some remarks}
We conclude this section by highlighting an interesting connection between Medvedev and extended Weihrauch reducibility. The definition of extended Weihrauch reducibility can be naturally strengthened (to obtain its \emph{strong} counterpart) by requiring that the map $\ovln{h}$ does not have access to the original input. 

\begin{definition}[Extended strong Weihrauch reducibility]
    If $f,g$ are extended Weihrauch predicates, we say that $f$ is \emph{extended strong Weihrauch reducible} to $g$, and write $f \extstrongweireducible g$ if there are $\ovln{k}, \ovln{h} \in \subpca{A}$ such that 
\begin{itemize}
    \item for every $p\in \pca{A}$ such that $f(p)\neq \emptyset$, $\ovln{k}\cdot p\downarrow$ and $g(\ovln{k}\cdot p)\neq \emptyset$;
    \item for every $A \in f(p)$ there is $B \in g(\ovln{k}\cdot p)$ such that for every $q \in B$, $\ovln{h}\cdot q \downarrow$ and $\ovln{h}\cdot q \in A$.
\end{itemize}
\end{definition}

Since the map $\ovln{h}$ does not have access to the original input, the definition of extended strong Weihrauch degrees could be given in the more general context of functions $X\to\powerset\powerset{(\pca{A})}$.

As observed above, strong Weihrauch reducibility and Medvedev reducibility are closely connected, as a strong Weihrauch reducibility $f \strongweireducible g$ can be seen as a Medvedev reducibility between the domains and a uniform Medvedev reducibility between the images. We now show that we can write $\extstrongweireducible$ in terms of the full existential completion of the Medvedev doctrine $\MedvedevDoctrine$. 

To this end, we first define a new doctrine:
\begin{definition}
   The doctrine $\doctrine{\set}{\D}$ is defined as follows: for every set $X$, the objects of $\D(X)$ are functions of type $F\function{R}{\powerset(\pca{A})}$, where $R\subseteq X\times \powerset(\pca{A})$ is a relation. As a notational convenience, let us write $x \name_R A$ if $(x,A)\in R$. The partial order is given by the poset reflection of the preorder defined as follows:  $F\le_D G$ for $G\function{S}{\powerset(\pca{A})}$, iff there are a function $\varphi\function{R}{\powerset(\pca{A})}$ and $\ovln{h}\in \subpca{A}$ such that, for every $x\name_R A$, $x\name_S \varphi(x,A)$ and $\ovln{h}\cdot G(x,\varphi(x,A)) \subseteq F(x,A)$, i.e.
\[ (\forall q \in G(x,\varphi(x,A)))(\ovln{h}\cdot q \in F(x,A)). \]
The action of the functor $\D$ on morphisms of $\set$ is defined, as usual, by pre-composition.
\end{definition}
\begin{remark}
Observe that, if we take $X\subseteq \pca{A}$, a relation on $X\times \powerset(\pca{A})$ can be identified with the assembly $\ran(R)$ whose set of names is exactly $\dom(R)$.

In particular, $(\ran(R),R)$ is an object of $\extPcaCat(\pca{A},\subpca{A})$. In this case, $F \le_D G$ can be equivalently restated by asking that there is a $\varphi$ and $\ovln{h}\in\subpca{A}$ such that $(\id, \varphi)\in \extPcaCat(\pca{A},\subpca{A})(\ran(R),\ran(S))$ and
\[ (\forall x\name_R A)(\forall q \in G(x,\varphi(x,A)))(\ovln{h}\cdot q \in F(x,A)). \] 
In other words, this corresponds to say that we have a reduction $F\extweireducible G$ witnessed by $(\id,\varphi)$ and $\ovln{h}$, and $\ovln{h}$ need not access the original input $x$. 
\end{remark}
We now prove that the full existential completion of $\MedvedevDoctrine$ is equivalent to $\D$. 
\begin{theorem}\thlabel{thm:D_is_full_ex_comp_Med}
$\compexfull{\MedvedevDoctrine}\cong \D$.

\end{theorem}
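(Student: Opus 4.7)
The plan is to exhibit a natural isomorphism $\Psi\colon \compexfull{\MedvedevDoctrine}\to\D$ fibrewise. On objects I set
\[ \Psi_X(f\function{Y}{X},\alpha) := F_{f,\alpha}\function{R_{f,\alpha}}{\powerset(\pca{A})}, \]
where $R_{f,\alpha}:=\{(f(y),\alpha(y))\st y\in Y\}\subseteq X\times\powerset(\pca{A})$ is the image of the joint map $(f,\alpha)$, and $F_{f,\alpha}(x,A):=A$ is the projection on the second coordinate. The guiding intuition is that an object of $\compexfull{\MedvedevDoctrine}(X)$ encodes, for each $x\in X$, a family of Medvedev problems indexed by $f^{-1}(x)\subseteq Y$, whereas an object of $\D(X)$ records the same collection of problems indexed directly by the pairs $(x,A)\in R$ that appear; the map $\Psi_X$ forgets the indexing set $Y$ and retains only its image in $X\times\powerset(\pca{A})$.

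For monotonicity, suppose $(f,\alpha)\leq (g,\beta)$ is witnessed by $h\function{Y}{Z}$ with $g\circ h=f$ and $\ovln{h}\in\subpca{A}$ with $\ovln{h}\cdot\beta(h(y))\subseteq\alpha(y)$ for every $y$. Invoking the axiom of choice, I fix a section $s\function{R_{f,\alpha}}{Y}$ of the surjection $y\mapsto(f(y),\alpha(y))$ and set $\varphi(x,A):=\beta(h(s(x,A)))$. A direct check using $g\circ h=f$ shows that $(x,\varphi(x,A))\in R_{g,\beta}$ and $\ovln{h}\cdot F_{g,\beta}(x,\varphi(x,A))\subseteq F_{f,\alpha}(x,A)$, yielding $F_{f,\alpha}\le_D F_{g,\beta}$. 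Reflection of the order is dual: from a witness $(\varphi,\ovln{h})$ of $F_{f,\alpha}\le_D F_{g,\beta}$, for each $y\in Y$ the membership $(f(y),\varphi(f(y),\alpha(y)))\in R_{g,\beta}$ supplies by AC some $z\in Z$ with $g(z)=f(y)$ and $\beta(z)=\varphi(f(y),\alpha(y))$, so that $h(y):=z$ witnesses $(f,\alpha)\leq(g,\beta)$. Essential surjectivity is routine: every $F\function{R}{\powerset(\pca{A})}$ is $\D$-equivalent to $\Psi_X(\pi_1\function{R}{X},F)$, the two inequalities being witnessed by the identity functional together with the choice maps $\varphi(x,A):=F(x,A)$ and (by AC once more) $\varphi'(x,B):=A$ for some $A$ with $F(x,A)=B$.

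It remains to verify naturality in $X$, which is a direct computation: for any $u\function{X}{X'}$, both composites $\Psi_X\circ \compexfull{\MedvedevDoctrine}_u$ and $\D_u\circ \Psi_{X'}$ send $(g\function{Z}{X'},\beta)$ to the second-projection function on the relation $\{(x,\beta(z))\in X\times\powerset(\pca{A})\st u(x)=g(z)\}$. The principal obstacle in the argument is the systematic use of the axiom of choice to bridge the indexed presentation of $\compexfull{\MedvedevDoctrine}$, in which a single pair $(x,A)$ can arise from many $y\in Y$, with the extensional presentation of $\D$; once canonical preimages are chosen, all remaining verifications reduce to an unfolding of the definitions.
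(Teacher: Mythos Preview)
Your proof is correct and follows essentially the same approach as the paper's: the same fibrewise map $(f,\alpha)\mapsto F_{f,\alpha}$ with $F_{f,\alpha}$ the second projection on the image of $(f,\alpha)$, the same AC-based verification of order preservation and reflection, and the same essential surjectivity argument via $(\pi_1\colon R\to X,F)$. Your additional explicit naturality check (which the paper omits) and your emphasis on where AC is invoked are welcome clarifications, but the core argument is the same.
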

\begin{proof}
For every set $X$, we show that $(\compexfull{\MedvedevDoctrine}(X),\lefullexcomp)$ is isomorphic to $(\D(X),\le_D)$. We start by defining the mapping $(\compexfull{\MedvedevDoctrine}(X),\lefullexcomp)\mapsto (\D(X),\le_D)$: this maps an element $(f,\alpha)$ to the function $F_{(f,\alpha)}\function{R_{(f,\alpha)}}{\powerset(\pca{A})}$ where $R_{(f,\alpha)}\subseteq X\times \powerset(\pca{A})$ is the set 
\[R_{(f,\alpha)}:=\{(x,A)\in X\times \powerset (\pca{A})\st (\exists y\in f^{-1}(x))(A= \alpha(y))\}\]
 and $F_{(f,\alpha)}(x,A):=A$. Now we show that this function preserves the order.
    
    Fix $(f,\alpha),(g,\beta)\in \compexfull{\MedvedevDoctrine}(X)$, with $\alpha\in \powerset(\pca{A})^Y$ and $\beta\in \powerset(\pca{A})^Z$. 
    Let us first show that $(f,\alpha) \lefullexcomp (g,\beta)$ implies $F_{(f,\alpha) }\le_D F_{(g,\beta)}$. Fix $k\function{Y}{Z}$ and $\ovln{h}\in\pca{A}$ witnessing $(f,\alpha) \lefullexcomp (g,\beta)$, i.e.\ $f=g k$ and $\alpha \MedvedevOrder \beta \circ k$ via $\ovln{h}$. Let $c\function{R_{(f,\alpha)}}{Y}$ be a choice function that maps every pair $(x,A)$ with $x \name_{R_{(f,\alpha)}} A$, to some $y\in \alpha^{-1}(A)$. By definition, if $x\name_{R_{(f,\alpha)}} A$ then  $\alpha^{-1}(A)\neq \emptyset$, hence $c$ is well-defined. 
    We define $\varphi(x,A) := \beta\circ k\circ c(x,A)$. We claim that $\ovln{h}$ and $\varphi$ witness $F_{(f,\alpha)}\le_D F_{(g,\beta)}$. Observe first that, for every $x \name_{R_{(f,\alpha)}} A$, $x\name_{R_{(g,\beta)}} \varphi(x,A)$. This follows from the fact that, for every $y\in Y$, $g(k(y))=f(y)$ (by definition of full existential completion). In particular, $g(k(c(x,A)))=x$, and therefore $(x, \beta\circ k\circ c(x,A))=(x,\varphi(x,A))\in R_{(g,\beta)}$. 
    Moreover, for every $x\name_{R_{(g,\beta)}} \varphi(x,A)$, letting $y := c(x,A)$ we have 
    \[\ovln{h}\cdot F_{(g,\beta)}(x,\varphi(x,A))= \ovln{h}\cdot \varphi(x,A) = \ovln{h}\cdot (\beta \circ k(y)) \subseteq \alpha(y) = A=F_{(f,\alpha)}(x,A). \]

   Now we show that our assignment also reflects the order: if $F_{(f,\alpha) }\le_D F_{(g,\beta)}$ is witnessed by $\ovln{h},\varphi$, then we define $k\function{Y}{Z}$ as a choice function that maps every $y$ in $Y$ to some element in $\{ z\in Z \st \beta(z) =\varphi(f(y),\alpha(y)) \}$. Notice that $k$ is well-defined: indeed, by hypothesis, $f(y) \name_{R_{(g,\beta)}} \varphi(f(y),\alpha(y))$, i.e.\ there is some $z\in Z$ such that $g(z)=f(y)$ and $\beta(z)=\varphi(f(y),\alpha(y))$. This also shows that, for every $y\in Y$,  $f(y)=g(k(y))$.
    
    To prove that $\ovln{h}$ and $k$ witness $(f,\alpha) \lefullexcomp (g,\beta)$ it is enough to notice that, for every $y$ in $Y$, 
    \[ \ovln{h}\cdot \beta(k(y)) \ovln{h}\cdot F_{(g,\beta)}(f(y),\varphi(f(y),\beta(k(y)))) \subseteq F_{(f,\alpha)}(f(y),\alpha(y))= \alpha(y) .  \]
Therefore, we have proved that the assignment $(\compexfull{\MedvedevDoctrine}(X),\lefullexcomp)\mapsto (\D(X),\le_D)$ determines a morphism of posets which also reflects the order. To conclude that it is an isomorphism, it is enough to show that it is surjective. So, let us consider an element $F\function{R}{\powerset (\pca{A})}$ of $\D(X)$, and let us consider the element $(\pr_X\function{R}{X},F)$ of $\compexfull{\MedvedevDoctrine}(X)$. We claim that $F\le_D F_{(\pr_X,F)}$ and $ F_{(\pr_X,F)}\le_D F$, i.e.\ they are equivalent in $\D(X)$. First observe that, by definition, we have that
\[R_{(\pr_X,F)}:=\{(x,A)\in X\times \powerset (\pca{A})\st (\exists A'\in \powerset (\pca{A}))(x\name_R A' \mbox{ and } A =F(x,A')) \}.\]
We claim that $F\le_D F_{(\pr_X,F)}$ via $\varphi:=F$ and the identity of $\pca{A}'$. First, by definition of $R_{(\pr_X,F)}$, we have that  $x \name_R A$ implies that $x\name_{R_{(\pr_X,F)}}\varphi (x,A) =F(x,A)$. Moreover,  
\[F_{(\pr_X,F)}(x,\varphi(x,A))=\varphi (x,A)=F(x,A).\]

For the converse direction, let $\varphi\function{R_{(\pr_X,F)}}{\powerset (\pca{A})}$ defined as $\varphi (x,A):=A'$ for (a choice of) an $A'$ satisfying $x\name_R A'$ and $A=F(x,A')$. We claim that $\varphi$ and the identity of $\subpca{A}$ witness $F_{(\pr_X,F)}\le_D F$. Notice that, by definition of $R_{(\pr_X,F)}$, $\varphi$ is well-defined. By definition of $R_{(\pr_X,F)}$, if $x\name_{R_{(\pr_X,F)}}A$ then we have that $x\name_R \varphi (x,A)= A'$. Moreover,
\[F(x,\varphi (x,A))=F(x,A')=A=F_{(\pr_X,F)}(x,A)\]
by definition of $\varphi$ and $F_{(\pr_X,F)}$. Therefore, we can conclude that $F_{(\pr_X,F)}\le_D F$, and hence that $F_{(\pr_X,F)}=F$ in the poset $\D(X)$. This concludes the proof that $\D\cong \compexfull{\MedvedevDoctrine}$.
\end{proof}

The previous result allows us to establish a first link between the Dialectica doctrines \cite{trotta23TCS,trotta2022} and the doctrines for computability presented in the previous sections, employing categorical universal properties.

We recall that Dialectica categories were originally introduced in \cite{dePaiva1989dialectica} as a categorification of G\"odel's Dialectica interpretation \cite{goedel1986}. Over the years, several authors have noticed some resemblance between the structure of Dialectica categories and some known notions of computability. However, despite the outwardly similar appearance, a formal connection between computability and Dialectica categories has never been proved so far.

To establish such a link, we employ a result due to Hofstra \cite{Hofstra2011}. In particular, he proved that the dialectica construction can be presented by combining the pure universal and pure existential completion (in the categorical setting of fibrations). Hence, in the language of doctrines, this means that the doctrine $\dial{P}$ happens to be equivalent to the doctrine $\compex{(\compun{P})}$. 

Following Hofstra's intuition, we can consider a natural generalization of the Dialectica construction, namely the \emph{full Dialectica construction}: given a doctrine $P$ we define the full Dialectica construction $\dialfull{P}$ as the doctrine $\compexfull{(\compunfull{P})}$.

Therefore, as direct corollary of \thref{thm:D_is_full_ex_comp_Med} and \thref{thm:medvedev_iso}, we obtain the following:
\begin{corollary}
 We have an isomorphism of doctrines $\D\cong \dialfull{\Mpcadoctrine}$.
\end{corollary}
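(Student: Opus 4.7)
The plan is simply to chain the two results just proved through the definition of $\dialfull{(-)}$. By \thref{thm:D_is_full_ex_comp_Med} we already know $\D\cong \compexfull{\MedvedevDoctrine}$, and by \thref{thm:medvedev_iso} there is an isomorphism $\mathfrak{J}\colon \MedvedevDoctrine \equiv \compunfull{\Mpcadoctrine}$ of (full universal) doctrines over $\set$. Since, by definition, $\dialfull{\Mpcadoctrine} = \compexfull{(\compunfull{\Mpcadoctrine})}$, the only thing left to verify is that the full existential completion sends $\mathfrak{J}$ to an isomorphism $\compexfull{\MedvedevDoctrine}\equiv \compexfull{(\compunfull{\Mpcadoctrine})}$; composing with the first display then gives the desired $\D\cong \dialfull{\Mpcadoctrine}$.

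To justify this lift, I would argue directly. Since $\mathfrak{J}$ is base-preserving (it sits over the identity functor on $\set$) and fibrewise a poset isomorphism, the assignment on representatives $(f,\varphi)\mapsto (f,\mathfrak{J}_B(\varphi))$ defines a map on the preorders that compute $\compexfull{\MedvedevDoctrine}(X)$ and $\compexfull{(\compunfull{\Mpcadoctrine})}(X)$. Reindexing in the full existential completion is defined by pulling back the first component and applying the action of the doctrine on the second; both operations commute with a base-preserving doctrine isomorphism, so naturality in $X$ is automatic. The order on $\compexfull{(-)}$ is witnessed by a morphism $h$ between the first components together with a fibrewise inequality in the underlying doctrine, so preservation and reflection of the order are immediate from $\mathfrak{J}$ being a fibrewise iso. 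Applying the same recipe to $\mathfrak{J}^{-1}$ yields a two-sided inverse.

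Alternatively, and more conceptually, one can appeal to the universal property in \thref{theorem full existential comp}: both $\eta^{\exists_{\mathsf{f}}}_{\compunfull{\Mpcadoctrine}}\circ \mathfrak{J}$ and $\eta^{\exists_{\mathsf{f}}}_{\MedvedevDoctrine}\circ \mathfrak{J}^{-1}$ factor uniquely through the respective full existential completions as existential morphisms, and uniqueness forces the two extensions to be mutually inverse. There is no genuine obstacle here; indeed the only minor bookkeeping point is that the full Dialectica construction must be read with the same base category $\set$ throughout, which is the case by construction of both $\MedvedevDoctrine$ and $\Mpcadoctrine$.
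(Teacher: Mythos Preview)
Your proposal is correct and follows exactly the approach the paper takes: the paper does not give an explicit proof at all, simply stating the result as a direct corollary of \thref{thm:D_is_full_ex_comp_Med} and \thref{thm:medvedev_iso} together with the definition $\dialfull{P}=\compexfull{(\compunfull{P})}$. Your additional justification that a base-preserving doctrine isomorphism lifts to an isomorphism after applying $\compexfull{(-)}$ is correct and in fact supplies more detail than the paper itself provides.
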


\begin{remark}
Observe that the preorder $\le_D$ describes a pointwise connection between $F$ and $G$. More precisely, once we fix $x\in X$, for every $A$ such that $x\name_R A$, the map $\varphi$ selects some $B$ such that $x\name_S B$. In other words, in order to solve $F(x,A)$ we need to look at $G(x,\varphi(x,A))$. However, the definition of extended-strong-Weihrauch reducibility allows for more flexibility: once we fix $x\in X$, we can use the map $\ovln{k}$ to prompt $G$ on a (possibly) different input $(\ovln{k}(x),\varphi(x,A))$. The gap between $\extstrongweireducible$ and $\le_D$ can therefore be bridged by requiring the existence of an effective map $\ovln{k}$ transforming the input for $F$ to a (possibly different) input for $G$.
\end{remark}
\begin{theorem}
    For every $\ovln{k} \in \subpca{A}$ and every $G\in \D(\pca{A})$, let $G_{\ovln{k}}$ denote the map $(p,A)\mapsto G(\ovln{k}\cdot p,A)$. For every $F,G\in \D(\pca{A})$, 
    \[ F\extstrongweireducible G \iff (\exists \ovln{k} \in \subpca{A})( F \le_D G_{\ovln{k}}). \]
\end{theorem}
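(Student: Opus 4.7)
The proof is a direct unpacking of the definitions once one pins down how the generalized reducibility $\extstrongweireducible$ on elements of $\D(\pca{A})$ should be read. The plan is to first state this reading explicitly: for $F\function{R}{\powerset(\pca{A})}$ and $G\function{S}{\powerset(\pca{A})}$ in $\D(\pca{A})$, $F\extstrongweireducible G$ means there exist $\ovln{k},\ovln{h}\in\subpca{A}$ such that for every $p\name_R A$, $\ovln{k}\cdot p\downarrow$, there is some $B$ with $\ovln{k}\cdot p\name_S B$, and $\ovln{h}\cdot G(\ovln{k}\cdot p,B)\subseteq F(p,A)$. This specializes to \thref{def:extwei_relation} on the extended Weihrauch predicates thanks to \thref{prop:equiv_rew_of_ext_wei} (applied to the strong case). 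At the same time, one unpacks $G_{\ovln{k}}$ as the element of $\D(\pca{A})$ with domain $S_{\ovln{k}}:=\{(p,B)\in\pca{A}\times\powerset(\pca{A})\st \ovln{k}\cdot p\downarrow \text{ and } \ovln{k}\cdot p\name_S B\}$ and $G_{\ovln{k}}(p,B):=G(\ovln{k}\cdot p,B)$.

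For the $(\Leftarrow)$ direction, I would take $\ovln{k}\in\subpca{A}$ together with witnesses $\varphi,\ovln{h}$ for $F\le_D G_{\ovln{k}}$. By definition of $\le_D$, for each $p\name_R A$ we get $p\name_{S_{\ovln{k}}}\varphi(p,A)$, which by construction of $S_{\ovln{k}}$ means $\ovln{k}\cdot p\downarrow$ and $\ovln{k}\cdot p\name_S \varphi(p,A)$. Moreover $\ovln{h}\cdot G_{\ovln{k}}(p,\varphi(p,A))=\ovln{h}\cdot G(\ovln{k}\cdot p,\varphi(p,A))\subseteq F(p,A)$. Taking $B:=\varphi(p,A)$ then verifies the clauses of $\extstrongweireducible$ with the same $\ovln{k}$ and $\ovln{h}$.

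For $(\Rightarrow)$, assume $F\extstrongweireducible G$ via $\ovln{k},\ovln{h}$. For every $(p,A)\in R$, the second clause of the definition guarantees the existence of some $B\in\powerset(\pca{A})$ with $\ovln{k}\cdot p\name_S B$ and $\ovln{h}\cdot G(\ovln{k}\cdot p,B)\subseteq F(p,A)$. Using the axiom of choice (as already done implicitly elsewhere in the paper, e.g.\ in the proof of \thref{thm:D_is_full_ex_comp_Med}), define $\varphi\function{R}{\powerset(\pca{A})}$ by letting $\varphi(p,A)$ be any such $B$. Then $\varphi$ maps $R$ into $S_{\ovln{k}}$ and $\ovln{h}\cdot G_{\ovln{k}}(p,\varphi(p,A))\subseteq F(p,A)$, which is exactly the definition of $F\le_D G_{\ovln{k}}$ with witnesses $\varphi$ and $\ovln{h}$.

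No step is really an obstacle; the only mildly delicate point is bookkeeping about the domains $R$, $S$, and $S_{\ovln{k}}$ and making sure the existential clause ``there is $B\in g(\ovln{k}\cdot p)$'' in \thref{def:extwei_relation} matches the role of $\varphi$ in $\le_D$. The essence of the theorem is precisely that the map $\ovln{k}$ in $\extstrongweireducible$ is exactly the extra freedom needed to pass from the pointwise comparison $\le_D$ (which keeps the first coordinate fixed) to a genuine reducibility that may reshape the input, which mirrors the remark immediately preceding the statement.
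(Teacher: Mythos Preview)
Your proposal is correct and follows exactly the paper's approach: the paper's proof is the single sentence ``$F\extstrongweireducible G$ via $(\ovln{k},\varphi),\ovln{h}$ iff $\varphi$ and $\ovln{h}$ witness the reduction $F\le_D G_{\ovln{k}}$'', and your argument is simply a careful unpacking of that equivalence, including the explicit description of the domain $S_{\ovln{k}}$ and the use of choice to extract $\varphi$ in the $(\Rightarrow)$ direction. The only difference is the level of detail; the underlying idea is identical.
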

\begin{proof}
    This is immediate by unfolding the definitions: $F\extstrongweireducible G$ via $(\ovln{k},\varphi),\ovln{h}$ iff $\varphi$ and $\ovln{h}$ witness the reduction $F\le_D G_{\ovln{k}}$. 
\end{proof}

\section{Conclusions}
\label{sec:conclusions}
We set out to and managed to categorify the notions of Turing, Medvedev, Muchnik, and Weihrauch reducibility and their variants.

To show the categorification works, we proved the Medvedev isomorphism theorem, the Muchnik isomorphism theorem, and the Weihrauch isomorphism theorems.
We also showed how the respective Medvedev, Muchnik, and Weihrauch doctrines relate to existential and universal completions.  Then, we consider several generalizations of Weihrauch reducibility for represented spaces and assemblies, extending our previous approach to these settings.
In future work, we aim to delve more deeply into the algebraic structures of these doctrines. If the structure proves to be sufficiently suitable, we intend to apply the tripos-to-topos construction to these doctrines and study the resulting categories.

\bibliographystyle{alphaurl}
\bibliography{references}

\end{document}